\newtheorem{thm}{Theorem}[section] 
\newtheorem{proposition}[thm]{Proposition}
\newtheorem{lemma}[thm]{Lemma}  
\newtheorem{corollary}[thm]{Corollary}
\theoremstyle{definition}  
\newtheorem{example}[thm]{Example}
\newtheorem{definition}[thm]{Definition} 
\theoremstyle{remark}  
\newtheorem{remark}[thm]{Remark}
\newtheorem{notation}[thm]{Notation}
\newcommand{\pr}{p}
\begin{document} 

\title{Anosov  Automorphisms of Nilpotent Lie Algebras}
\author{Tracy L. Payne} 
\address{Department of Mathematics,  Idaho State University, 
Pocatello, ID 83209-8085} 
\email{payntrac@isu.edu}


\subjclass{Primary: 22E25, 22D45, 37D20}

 \begin{abstract} Each  matrix $A$ in $GL_n(\boldZ)$
naturally defines an automorphism $f$ of the  free $r$-step nilpotent
Lie algebra $\frakf_{n,r}.$  We study the relationship between the
matrix $A$ and the  eigenvalues and rational invariant subspaces for
$f.$   We give applications to the study of Anosov automorphisms.
\end{abstract}

\keywords{Anosov automorphism,  Anosov diffeomorphism, Anosov Lie
algebra,  hyperbolic automorphism,  nilmanifold, nilpotent Lie
algebra}

 \maketitle




\section{Introduction}\label{introduction}

\subsection{Anosov maps} Anosov maps are fundamental objects 
in the field of dynamical systems.  A $C^1$ diffeomorphism $f$ of a
compact Riemannian manifold $M$ is called \textit{Anosov} if there
exist constants $\lambda$ in $(0,1)$ and $c > 0$ along with  a
$df$-invariant splitting $TM = E^s \oplus E^u$  of the tangent  bundle
of $M$ so that for all $n \ge 0$,
\begin{align*} \| df^n_x\bfv\| &\le c \lambda^n \| \bfv \| \quad
\text{for all $\bfv$ in $E^s(x)$, and} \\ \| df^{-n}_x\bfv \| &\le c
\lambda^n \| \bfv \| \quad \text{for all $\bfv$ in
$E^u(x)$.} \label{expanding definition}
\end{align*}  The standard example of an Anosov map is a toral map
defined by a unimodular hyperbolic automorphism  of $\boldR^n$ that
preserves an integer lattice. 

The only other known examples of Anosov maps arise from  automorphisms
of nilpotent groups.   A hyperbolic automorphism of a simply connected
nilpotent Lie group $N$ that fixes a torsion-free lattice $\Gamma < N$
descends to an Anosov diffeomorphism of the compact nilmanifold
$N/\Gamma$.  It is also possible that such an $N/\Gamma$ has a finite
quotient, called an  {\em infranilmanifold}, and that  the Anosov map
on $N/\Gamma$  finitely covers  an Anosov map of the infranilmanifold.  

An automorphism of a Lie algebra that descends to an Anosov map of a
compact quotient of the corresponding  Lie group is called an {\em
Anosov automorphism.}  Nilpotent Lie algebras are the only Lie algebras
that admit Anosov  automorphisms.  A Lie algebra $\frakn$ is called
{\em Anosov} if there exists  a  basis $\calB$  for $\frakn$ with
rational structure constants,  and  there exists a   hyperbolic
automorphism  $f$ of $\frakn$  with respect to which $f$ is
represented relative to $\calB$ by  a matrix in $GL_n(\boldZ).$  A
simply connected Lie group admits a hyperbolic automorphism preserving
a lattice if and only if its Lie algebra is Anosov
(\cite{auslander-scheuneman}). 

 In this paper we study the properties of Anosov automorphisms and
Anosov Lie algebras.  There has already been  some  progress in this area.  
S. G. Dani showed that the free $r$-step nilpotent Lie algebra
$\frakf_{n,r}$ on $n$ generators admits an Anosov automorphism when $r<n$
(\cite{dani-78}). Dani and Mainkar considered  when 
 two-step nilpotent Lie algebras defined by graphs 
admit  Anosov automorphisms (\cite{dani-mainkar-05}).  Real Anosov Lie
algebras and all their rational forms
have been classified in dimension eight and less (\cite{ito-89},
\cite{lauret-will-05}).  
 Lauret observed that
the classification problem for Anosov Lie algebras
contains within it the problem of classifying all Lie algebras
admitting $\boldZ^+$ derivations (\cite{lauret-03c}, \cite{lauret-03}).

  Auslander and Scheuneman established the correspondence between
Anosov automorphisms of nilpotent Lie algebras and semisimple
hyperbolic automorphisms of free nilpotent Lie algebras preserving
ideals of a certain type (\cite{auslander-scheuneman}).   A matrix
$A$ in $GL_n(\boldZ),$ together with a rational basis $\calB$ of 
$\frakf_{n,r}$ induces 
an automorphism $f^A$ of $\frakf_{n,r}.$ Suppose that
$\fraki$ is an ideal of $\frakf_{n,r}$ such that 
\begin{enumerate} 
\item{$\fraki$ is invariant under ${f^A},$}\label{prop0}
\item{the restriction of ${f^A}$ to $\fraki$ is unimodular,}
\item{$\fraki$ has a basis that consists  of $\boldZ$-linear
combinations of elements of $\calB$, and}\label{prop1}
\item{ all eigenspaces for ${f^A}$ for eigenvalues with modulus one
are contained in $\fraki$.}\label{prop2}
\end{enumerate} 
 If we  let $\frakn =  \frakf_{n,r} / \fraki$ and
let  $\pr : \frakf_{n,r} \to \frakn$ be the projection map,  there is
an  Anosov automorphism $\overline{f} : \frakn \to \frakn$ such that
$\overline{f} \pr = \pr f^A$.   We will  call the four conditions 
the {\em Auslander-Scheuneman conditions.}
Auslander and Scheuneman showed that 
any semisimple Anosov automorphism $f$ of an $r$-step
nilpotent Lie algebra $\frakn$ may be represented in the manner just
described, relative to a rational basis $\calB$ of a free nilpotent Lie algebra
$\frakf_{n,r}$, a semisimple matrix $A$ in $GL_n(\boldZ),$ 
and an ideal $\fraki$ in $\frakf_{n,r}$ satisfying the 
four conditions.  We will always assume without loss of generality that 
$\fraki < [\frakf_{n,r},\frakf_{n,r}].$

 In order to
understand general properties of Anosov Lie algebras, one must understand
the  kinds of  ideals of  free nilpotent Lie algebras
that satisfy the Auslander-Scheuneman conditions
 for some automorphism $f^A$ defined by a  matrix $A \in GL_n(\boldZ).$
  The dynamical properties of a 
toral Anosov automorphism of $\boldR^n/\boldZ^n$ 
are closely related to the algebraic properties
of the characteristic polynomial $p$ of the matrix $A$ in 
$GL_n(\boldZ)$ used to define the automorphism (See \cite{everest-ward}).
We show in this work that, similarly, the algebraic properties of the
  characteristic polynomial $p$ of the matrix $A$ defining the
automorphism 
$f^A$ of a free nilpotent Lie algebra
determine  the structure of the ideals that  satisfy the
Auslander-Scheuneman conditions for $f^A.$  

\subsection{Summary of results} Now we summarize the main ideas of the
paper. 
 We associate to any  automorphism $f^A, 
A \in GL_n(\boldZ),$ of a free  $r$-step nilpotent Lie algebra
$\frakf_{n,r}$  an $r$-tuple of  polynomials $(p_1, p_2, \ldots, p_r).$  
For $i=1, \ldots,r,$ the
polynomial $p_i$ is the characteristic polynomial of a matrix representing
the automorphism $f^A$
 on an $i$th step $V_i$ of $\frakf_{n,r}.$  
Let $K$ denote the splitting field of $p_1$ over $\boldQ,$
and let $G$ denote the Galois group for $K$ over $\boldQ.$ 
 We  associate  to the
automorphism  the action of the finite group $G$ on $\frakf_{n,r}(K),$
the free nilpotent 
Lie algebra over $K.$   We show in Theorem 
\ref{actions} that  $G$ orbits  in $\frakf_{n,r}(K)$ correspond to
rational invariant subspaces for $f^A,$ and the 
characteristic polynomial for the restriction of $f^A$ to such an invariant
subspace is a power of an irreducible polynomial.

We analyze Anosov Lie algebras using the following general approach. 
 We fix a free
$r$-step nilpotent Lie algebra $\frakf_{n,r}.$  We consider the
class of automorphisms of $\frakf_{n,r}$ whose associated polynomial
$p_1$ has Galois group $G,$ where $G$ is isomorphic to a subgroup of
the symmetric group $S_n.$
We let $(p_1, p_2, \ldots, p_r)$ be the $r$-tuple of polynomials
associated to such a polynomial $p_1.$
Our goal is to determine the factorizations of the polynomials
$p_1, \ldots, p_r;$ this will tell us what rational invariant subspaces
for $f$ are.   Such subspaces generate any ideal satisfying the 
Auslander-Scheuneman conditions.  
First we analyze the factorizations of $p_2, \ldots, p_r$ into
powers of irreducibles by understanding 
orbits of the action of the Galois group of $p_1$ on $\frakf_{n,r}(K).$  
 Then we determine whether the corresponding 
rational invariant subspaces are minimal and whether there are eignevalues 
of modulus one 
using  ideas from number theory (See Proposition \ref{full rank} and 
Lemma \ref{how it can factor}).    

    We extend the  classification of
Anosov Lie algebras to some new classes of two-step Lie
algebras. 
\begin{thm}\label{main}   Suppose that $\frakn$ is a 
two-step Anosov
Lie algebra of type $(n_1, n_2)$ 
with associated polynomials $(p_1,p_2).$   
 Let $G$ denote the Galois group of $p_1.$
\begin{enumerate}
\item{If 
 $n_1 = 3, 4$ or $5,$  then $\frakn$ is one of the
Anosov  Lie algebras listed in Table \ref{r=2}.}\label{classify-lowdim}
\item{If  $p_1$ is irreducible and
 the action of $G$ on the roots of $p_1$ is doubly transitive, 
then $\frakn$ is isomorphic to the free nilpotent Lie algebra
$\frakf_{n,2}$. }\label{classify-full rank}
\end{enumerate}
\end{thm}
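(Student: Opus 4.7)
By the Auslander--Scheuneman correspondence recalled above, any two-step Anosov Lie algebra $\frakn$ of type $(n_1, n_2)$ is realized as $\frakn = \frakf_{n_1,2}/\fraki$ for some $A \in GL_{n_1}(\boldZ)$ whose characteristic polynomial is $p_1$, where $\fraki \subseteq V_2 := [\frakf_{n_1,2}, \frakf_{n_1,2}]$ is a rational $f^A$-invariant subspace satisfying the four Auslander--Scheuneman conditions.  By Theorem~\ref{actions}, $\fraki$ decomposes as a sum of minimal rational $f^A$-invariant subspaces, each of them the $\boldQ$-span of a single $G$-orbit of eigenvectors of $f^A$ on $V_2(K)$, where $K$ is the splitting field of $p_1$ over $\boldQ$ and $G = \Gal(K/\boldQ)$.

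\textbf{Proof of (\ref{classify-full rank}).}  Let $v_1, \dots, v_{n_1}$ be eigenvectors of $A$ in $V_1(K)$, with eigenvalues $\lambda_1, \dots, \lambda_{n_1}$.  Then $\{v_i \wedge v_j : i<j\}$ is a basis of $V_2(K) = \Lambda^2 V_1(K)$ consisting of eigenvectors for $f^A$, with eigenvalues $\lambda_i \lambda_j$.  The Galois group $G$ permutes the eigenlines $K(v_i \wedge v_j)$ via its induced action on unordered pairs, and this action is transitive precisely because $G$ acts doubly transitively on the roots.  Hence the $\boldQ$-span of the $G$-orbit of any single eigenline already contains a $K$-basis of $V_2(K)$, so the unique nonzero minimal rational $f^A$-invariant subspace of $V_2$ is $V_2$ itself.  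Since $\fraki = V_2$ would make the quotient abelian, contradicting the two-step hypothesis, we conclude $\fraki = 0$ and $\frakn \cong \frakf_{n_1,2}$.

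\textbf{Proof of (\ref{classify-lowdim}).}  I proceed by explicit case analysis for $n_1 \in \{3, 4, 5\}$.  For each such $n_1$, I enumerate the factorization types of $p_1$ over $\boldQ$ together with the transitive Galois group of each irreducible factor (as a subgroup of a symmetric group acting on the corresponding set of roots).  The eigenvalues $\lambda_i \lambda_j$ of $f^A$ on $V_2$ then partition into $G$-orbits, and Theorem~\ref{actions} yields the factorization of $p_2$ into powers of irreducibles from this partition.  Proposition~\ref{full rank} and Lemma~\ref{how it can factor} identify which orbits produce minimal rational invariant subspaces of full rank, and which ones can support eigenvalues of modulus one.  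Auslander--Scheuneman condition~(4) then forces $\fraki$ to be exactly the sum of those minimal invariant subspaces on which $f^A$ has only modulus-one eigenvalues; conditions (1)--(3) are automatic for such $\fraki$.  Each resulting quotient is then identified up to isomorphism with an entry of Table~\ref{r=2}.

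\textbf{Main obstacle.}  In part~(\ref{classify-full rank}), the subtle point is that coincidences $\lambda_i \lambda_j = \lambda_k \lambda_l$ may enlarge the eigenspaces of $f^A$ on $V_2(K)$ and in principle permit $f^A$-invariant subspaces strictly smaller than the span of a full $G$-orbit; the argument above circumvents this because double transitivity already forces any single orbit of eigenlines to span $V_2(K)$, regardless of coincidences.  In part~(\ref{classify-lowdim}), the main labor is the exhaustiveness of the case analysis: each candidate configuration requires verification of realizability over $\boldZ$ (typically via a variant of Dani's construction) together with a careful check that no admissible ideal $\fraki$ has been missed, especially when $p_1$ is reducible and several Galois groups interact on different blocks.
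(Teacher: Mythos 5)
The decisive gap is in your part (\ref{classify-full rank}). Transitivity of $G$ on the eigenlines $K\,[\bfz_i,\bfz_j]$ shows only that the span of the $G$-orbit of any \emph{one} of those eigenlines is all of $V_2(K)$; it does not show that every nonzero rational $f^A$-invariant subspace contains such an eigenline, and your Setup overstates Theorem \ref{actions}, which says $V_2(K)$ is a direct sum of orbit spans $E_G^K(\bfw)$ whose characteristic polynomials are \emph{powers} of irreducibles, not that every minimal rational invariant subspace is an orbit span. If a multiplicative coincidence $\alpha_i\alpha_j=\alpha_k\alpha_l$ (with $\{i,j\}\neq\{k,l\}$) occurs, then $p_2$ can be a proper power $r^s$ of an irreducible, and by Proposition \ref{p-properties}, Part \eqref{rat-subsp}, $V_2(\boldR)$ then splits into $s\ge 2$ proper nonzero rational invariant subspaces, none of which contains any eigenline $K[\bfz_i,\bfz_j]$ (they meet the repeated eigenspaces in ``diagonal'' lines); the quotient by one of them would be a non-free two-step Anosov algebra, contradicting your conclusion. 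So your claim in ``Main obstacle'' that the orbit argument works ``regardless of coincidences'' is precisely where the proof fails: the coincidences are what must be excluded. The paper excludes them by number theory: double transitivity makes the permutation representation of $G$ the sum of the trivial representation and a $\boldQ$-irreducible one, so the roots of $p_1$ have full rank (Proposition \ref{full rank}, Part \eqref{2t->fr}, using hyperbolicity and Remark \ref{f^2} to normalize the product of the roots to $1$), and then Lemma \ref{how it can factor}, Part \eqref{fr-p2-irr}, shows the roots $\alpha_i\alpha_j$ are pairwise distinct, hence $p_2$ is irreducible; irreducibility forces any rational invariant subspace of $V_2$ to be $0$ or $V_2$, and the two-step hypothesis gives $\fraki=0$. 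This is exactly Theorem \ref{Sn}, Part \eqref{2,2}, which is how the paper proves this part. Your argument needs that full-rank/distinctness step; without it the key assertion is unjustified.

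For part (\ref{classify-lowdim}) you give only a plan, not a proof: the enumeration of factorization types and Galois groups for $n_1=3,4,5$, the determination of the minimal rational invariant subspaces in each case, and the realizability checks are the content of Theorems \ref{n1=3}, \ref{n=4,r=2} and \ref{n1=5} (resting on Theorem \ref{2+2+...}, Theorem \ref{Cn} and Example \ref{D10}), and none of this is carried out, so nothing is actually matched against Table \ref{r=2}. Moreover your description of the classification logic is wrong on one point: the fourth Auslander--Scheuneman condition forces $\fraki$ to \emph{contain} every modulus-one eigenspace, not to \emph{equal} their sum. The several entries of Table \ref{r=2} of different types arising from a single $p_1$ (for instance $(4,6)$, $(4,4)$, $(4,2)$, or $(5,9)$, $(5,6)$, $(5,3)$) come exactly from the freedom to enlarge $\fraki$ by further minimal invariant subspaces carrying no modulus-one eigenvalues. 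As written, your procedure would output a single quotient per $p_1$ and would miss most of the table.
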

 We can also classify Anosov Lie
algebras  admitting automorphisms whose polynomials $p_1$ 
have certain specified  Galois groups.  
\begin{thm}\label{Sn-Cn} 
Let $\overline{f}$ be a semisimple Anosov automorphism of 
an $r$-step Anosov Lie algebra.
Let $(p_1, \ldots, p_r)$ be the $r$-tuple of polynomials associated
to the  automorphism $f$ of the free nilpotent Lie algebra
 $\frakf_{n,r}$ induced by $\overline{f}.$    Suppose that $p_1$ is irreducible.
\begin{enumerate}
\item{If  the  polynomial $p_1$ is of prime degree with
cyclic Galois group, then $\frakn$  is one of the Lie algebras 
of  type $C_n$ defined over $\boldR$
 as in Definition \ref{ideal-i}.  Conversely,
if $n$ is prime, and $\fraki$ is an ideal 
of $\frakf_{n,r}$ of cyclic type defined over $\boldR$ containing  the 
ideal $\frakj_{n,r}$ defined in Definition \ref{j-def}, then the 
 Lie algebra $\frakn = \frakf_{n,r}/\fraki$ 
 is Anosov.  }\label{classify-prime}
\item{If the Galois group of $p_1$ is 
symmetric,  then
\begin{enumerate}
\item{If $r = 2,$  then $\frakn$ is isomorphic to
$\frakf_{n,2},$}
\item{If  $r = 3,$  then $\frakn$ is isomorphic to one of 
the following five Lie algebras: 
$\frakf_{n,3},$ $\frakf_{n,3}/F_1, \frakf_{n,3}/F_2,$
 $\frakf_{n,3}/(F_1 \oplus F_{2a}),$  and $\frakf_{n,3}/F_{2a},$ 
where the ideals  $F_1$ and  $F_2$ are as defined in Equation  
\eqref{F defs} of Section \ref{p2p3} and $F_{2a}$ is as 
in Proposition \ref{Fdecomp}. }
\end{enumerate}}
\end{enumerate}
\end{thm}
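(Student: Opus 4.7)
The plan is to reduce each part to a question about $G$-orbits on a $K$-basis of $\frakf_{n,r}(K)$ via Theorem \ref{actions}, and then to dispatch two sub-questions separately: (a) which such orbit unions give ideals $\fraki$ satisfying the Auslander-Scheuneman conditions, and (b) when a hyperbolic $A \in GL_n(\boldZ)$ exists realizing the quotient. Throughout, since $\overline{f}$ is a semisimple Anosov automorphism we may use the Auslander-Scheuneman reduction to write $\frakn = \frakf_{n,r}/\fraki$ with $\fraki \subset [\frakf_{n,r},\frakf_{n,r}]$; by Theorem \ref{actions}, any such $\fraki$ is spanned by a union of full $G$-orbits on each graded piece $V_2,\ldots,V_r$, where $G = \operatorname{Gal}(p_1)$.

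For part (1), fix $G$ cyclic of prime order $n$ acting transitively on the roots of $p_1$. Every orbit of $G$ on the induced $K$-basis of each $V_i$ has size dividing $n$, hence size $1$ or $n$. The ideals generated by arbitrary unions of such orbits are precisely the ideals of \emph{cyclic type} from Definition \ref{ideal-i}, yielding the forward implication. For the converse, given an ideal $\fraki$ of cyclic type containing $\frakj_{n,r}$, I would exhibit a single matrix $A \in GL_n(\boldZ)$ whose characteristic polynomial $p_1$ is irreducible of prime degree $n$ with cyclic Galois group; a standard construction uses the companion matrix of a carefully chosen polynomial (e.g., arising from a cyclic number field generated by a Salem-type unit). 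The role of $\frakj_{n,r}$ is to collect exactly the subspace on which a $G$-orbit could produce an eigenvalue of modulus one; Proposition \ref{full rank} and Lemma \ref{how it can factor} verify that once $\fraki \supseteq \frakj_{n,r}$, no such eigenvalues survive on the quotient, so $f^A$ descends to an Anosov automorphism of $\frakf_{n,r}/\fraki$.

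For part (2), the case $r = 2$ is immediate from Theorem \ref{main}(\ref{classify-full rank}), since $S_n$ acts doubly transitively on the roots of any irreducible $p_1$ with symmetric Galois group. For $r = 3$, I would decompose $V_2$ and $V_3$ as $\boldQ[S_n]$-modules: $V_2$ is $S_n$-irreducible (so $\fraki \cap V_2 = 0$ and $p_2$ is a single irreducible power), while $V_3$ breaks as a direct sum of a small number of $\boldQ$-irreducible $S_n$-submodules that, after identification with the pieces named in the paper, include $F_1$, $F_2 = F_{2a} \oplus F_{2b}$, and their complements in $V_3$. Any candidate ideal $\fraki$ is a sum of a subset of these summands; the five Lie algebras listed are exactly those for which the quotient admits a hyperbolic $A$ with no modulus-one eigenvalue, all other subsets being excluded by the two cited results.

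The main obstacle will be the last step of part (2): showing that each of the five listed ideals \emph{is} realized by a genuine hyperbolic integer matrix $A$, while simultaneously ruling out every other combination of the $S_n$-summands of $V_3$. Constructing $A$ requires a number-theoretic input (producing an irreducible degree-$n$ polynomial over $\boldZ$ with symmetric Galois group and with prescribed absence of multiplicative relations among its roots), and ruling out unwanted combinations requires showing that the missing summands \emph{always} contribute a modulus-one eigenvalue to the induced action on $V_3$, which in turn reduces to the analysis of multiplicative relations among triple products of Galois conjugates handled by Lemma \ref{how it can factor}.
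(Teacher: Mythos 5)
Your overall skeleton (reduce to Galois orbits via Theorem \ref{actions}, use the full-rank machinery for irreducibility, construct an explicit integer matrix for realizability) is the paper's strategy, but two of your key steps are genuinely wrong or missing. First, in part (2) you argue by decomposing $V_2$ and $V_3$ into $\boldQ$-irreducible $S_n$-submodules and claim ``$V_2$ is $S_n$-irreducible.'' That is false as a module statement ($V_2\cong\Lambda^2\boldQ^n$ splits off a copy of the standard representation for $n\ge 3$), and, more importantly, the module decomposition is not the relevant object: candidate ideals are rational $f$-invariant subspaces, which correspond to $\boldQ$-factorizations of $p_2$ and $p_3$, not to $S_n$-submodules ($F_1$, $F_{2a}$, $F_{2b}$ are themselves far from irreducible submodules, and for $n=4$ the module $V_2$ has proper submodules even though $p_2$ is irreducible and $V_2$ has no proper nontrivial rational invariant subspace). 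The correct mechanism, as in Theorem \ref{Sn}, is that double (resp.\ triple) transitivity makes $V_2$ (resp.\ $F_1,F_{2a},F_{2b}$) single orbit-spans, so by Theorem \ref{actions} their characteristic polynomials $p_2$, $q_1$, $q_2$ are primary, and then full rank (Proposition \ref{full rank} and Lemma \ref{how it can factor}, Part \eqref{fr-p2-irr}) upgrades them to irreducible, forcing $\fraki\cap V_2=0$ and $\fraki\cap V_3$ to be a sum of $F_1,F_{2a},F_{2b}$. Your closing ``main obstacle'' also misreads the statement: up to isomorphism those sums already give exactly the five listed algebras, so nothing further needs to be excluded by modulus-one considerations when $n\ge 4$ (the only degenerate exclusion is $n=3$, where $q_2(x)=x\pm1$ forces $F_2\subset\fraki$, a case you do not address), and the necessity statement does not even require realizing all five. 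Also note that Theorem \ref{actions} by itself does not say every Auslander--Scheuneman ideal is a union of full orbit spans; that needs the minimality of the orbit spans, i.e.\ the irreducibility just described (and, in part (1), primality of $n$ together with simplicity of $C_n$).

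Second, in the converse of part (1) your hyperbolicity verification has a real gap: Proposition \ref{full rank} and Lemma \ref{how it can factor} control only multiplicative relations equal to $\pm1$ and the degrees of factors, so with $\frakj_{n,r}\subset\fraki$ they rule out eigenvalues $\pm1$ on the quotient but say nothing about nonreal eigenvalues of modulus one. The paper closes this using parity: every surviving rational invariant subspace has irreducible characteristic polynomial of odd degree $n$, and an irreducible polynomial with a modulus-one root must be self-reciprocal of even degree (Remark \ref{roots of modulus one}, equivalently Lemma \ref{odd-good} since $|C_n|$ is odd); without this step your argument does not show the quotient map is Anosov. Relatedly, your proposed construction of $A$ from ``a cyclic number field generated by a Salem-type unit'' points in exactly the wrong direction: Salem numbers have conjugates on the unit circle, which is what must be avoided. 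The paper instead takes a Dirichlet fundamental unit of the (totally real) cyclic field of prime degree $n$ furnished by Kronecker--Weber (Proposition \ref{existence}), whose minimal polynomial has degree $n$ because $n$ is prime, and is Anosov by the odd-order argument above.
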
 
 Matrices in $GL_n(\boldZ)$ having characteristic
polynomial with symmetric Galois group are dense in the sense of
thick and thin sets (\cite{serre-92}); hence, the second part of 
the previous theorem describes 
Anosov automorphisms of two- and three-step Lie algebras 
that are generic in this sense. 

We investigate some general properties of Anosov
automorphisms.  We can describe the dimensions of
 minimal nontrivial rational invariant subspaces.   
Out of  such analyses we obtain the following special  case.
\begin{thm}\label{general-properties} Suppose that $\frakn$ is an
Anosov Lie algebra of type  $(n_1, \ldots, n_r).$   If $n_1=3,$  
then $n_i$ is a multiple of $3$ for all $i=2, \ldots, r,$ and if $n_1=4,$ 
then $n_i$ is even for all $i=2, \ldots, r.$  
If  $n_1$ is prime and
 the polynomial $p_1$ is irreducible,  
then $n_1$ divides $n_i$ for all $i = 2, \ldots,r < n.$
\end{thm}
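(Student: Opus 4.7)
The plan is to decompose each step $V_i$ of the free nilpotent Lie algebra $\frakf_{n,r}$ into weight spaces for the semisimple action of $f^A$, then reduce all three claims to a statement about the divisibility of Galois orbit sizes on weights.

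First I would fix an eigenbasis $e_1, \ldots, e_n$ of $V_1(K)$ with eigenvalues $\alpha_1, \ldots, \alpha_n$ (the roots of $p_1$ in its splitting field $K$), and set $G := \Gal(K/\boldQ)$. Then $V_i(K)$ decomposes as $\bigoplus_{\mathbf{a}} V_i^{\mathbf{a}}$ indexed by tuples $\mathbf{a} = (a_1, \ldots, a_n)$ of nonnegative integers with $\sum_j a_j = i$, with $f^A$ acting by $\alpha^{\mathbf{a}} := \prod_j \alpha_j^{a_j}$; the group $G$ permutes these weight spaces according to its permutation of the roots. By Theorem~\ref{actions} and Galois descent, for each $G$-orbit $O$ of weights the sum $W_O := \bigoplus_{\mathbf{a} \in O} V_i^{\mathbf{a}}$ is a rational $G$-stable subspace, and every rational $f^A$-invariant subspace of $W_O$ has $\boldQ$-dimension equal to $|O| \cdot \dim_K U_{\mathbf{a}}$ for some $G_{\mathbf{a}}$-stable subspace $U_{\mathbf{a}} \subseteq V_i^{\mathbf{a}}$. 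In particular, $\fraki \cap W_O$ has $\boldQ$-dimension divisible by $|O|$, so the contribution of $O$ to $n_i$ is a nonnegative multiple of $|O|$, and $n_i$ itself is a sum of such multiples.

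Since $\det A = \pm 1$, the constant term of $p_1$ is $\pm 1$, so any rational root of $p_1$ would be $\pm 1$, yielding an eigenvector in $V_1$ of modulus one; but $V_1 \cap \fraki = 0$ (because $\fraki \subseteq [\frakf_{n,r}, \frakf_{n,r}]$), contradicting condition~(4) of the Auslander-Scheuneman conditions. Hence $p_1$ has no linear factors. Condition~(4) also forces any weight $\mathbf{a}$ with $|\alpha^{\mathbf{a}}| = 1$ to contribute $0$ to $n_i$, so it suffices to verify, for each non-modulus-one weight, that $|G \cdot \mathbf{a}|$ is divisible by the integer claimed in the theorem. For parts (i) and (iii), $p_1$ is irreducible of prime degree $p$ (in part (i), $p = 3$ by the no-linear-factors observation), so $G$ is a transitive subgroup of $S_p$ and contains a $p$-cycle $\tau$. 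A weight $\mathbf{a}$ fixed by $\tau$ has all entries equal, so its eigenvalue is $(\alpha_1 \cdots \alpha_p)^{a_1} = (\pm 1)^{a_1}$, of modulus one; otherwise the $\langle\tau\rangle$-orbit already has size $p$, and the $G$-orbit is a multiple. For part~(iii) the hypothesis $i \leq r < p$ forces $\sum_j a_j = i < p$, ruling out constancy automatically and giving $p \mid n_i$ directly.

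For part~(ii), $p_1$ has degree $4$ with no linear factor, so it is either irreducible (with Galois group a transitive subgroup $V_4, C_4, D_4, A_4,$ or $S_4$ of $S_4$), or a product $q_1 q_2$ of two irreducible integer quadratics, in which case $q_j(0) = \pm 1$ (since $q_1(0) q_2(0) = p_1(0) = \pm 1$ in $\boldZ$) and $G \in \{C_2, C_2 \times C_2\}$ acting on the two root pairs. For each of these seven cases I would check directly that (a) any weight fixed by $G$ satisfies $a_1 = a_2$ and $a_3 = a_4$ (paired according to the irreducible factors of $p_1$), so the eigenvalue $(\alpha_1 \alpha_2)^{a_1}(\alpha_3 \alpha_4)^{a_3}$ collapses to $\pm 1$; and (b) every non-fixed orbit has size $2, 4, 6, 8, 12,$ or $24$, hence even. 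The main obstacle is this case-by-case bookkeeping across the seven Galois groups; the subtle ingredient is that monic integer factors of $p_1$ inherit unit constant terms from $p_1$, which is what collapses the fixed-weight eigenvalues to $\pm 1$ in the reducible cases, since in the transitive cases constancy already gives eigenvalue $(\alpha_1 \alpha_2 \alpha_3 \alpha_4)^{a_1} = (\pm 1)^{a_1}$ directly.
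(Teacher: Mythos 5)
There is a genuine gap, and it sits exactly at the step you treat as automatic: the ``Galois descent'' claim that every rational $f^A$-invariant subspace of $W_O$ has $\boldQ$-dimension $|O|\cdot\dim_K U_{\mathbf a}$, and hence that $n_i$ is a sum of nonnegative multiples of orbit sizes of non-modulus-one weights. That claim presupposes that distinct weights $\mathbf a\ne\mathbf b$ give distinct eigenvalues $\alpha^{\mathbf a}\ne\alpha^{\mathbf b}$, i.e.\ that there are no multiplicative relations among the roots beyond $\alpha_1\cdots\alpha_n=\pm1$ --- the ``full rank'' property of Definition \ref{full rank-def}. Without it, two things break: (i) the stabilizer in $G$ of the eigenvalue $\alpha^{\mathbf a}$ can be strictly larger than the stabilizer of the weight $\mathbf a$, so a minimal rational invariant subspace has dimension $[G:\operatorname{Stab}_G(\alpha^{\mathbf a})]$, which may be a proper divisor of $|O|$; and (ii) $\fraki\cap V_i$ need not decompose along the subspaces $W_O$ at all, since rational invariant ``diagonal'' subspaces can straddle weight spaces (even in different orbits) that share an eigenvalue. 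Your weight-level combinatorics (a $\tau$-fixed weight is constant; $G$-fixed weights have $a_1=a_2$, $a_3=a_4$; non-fixed orbits have size divisible by $p$, resp.\ even) are correct, but they live at the level of weights, whereas dimensions of rational invariant subspaces are governed by eigenvalues; the bridge between the two is precisely what the paper spends its effort on (Proposition \ref{full rank}, Lemma \ref{stabilizer}, Lemma \ref{how it can factor}), and your proposal never establishes it.

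The gap is not cosmetic, because in the $n_1=4$ cases your argument is supposed to cover, full rank genuinely fails: if $p_1$ is a product of two quadratics, or is irreducible with group $V_4$, $C_4$ or $D_8$, there are relations such as $\alpha_1\alpha_2=\pm1$, so your displayed descent statement is false as stated and the reduction ``it suffices to check orbit sizes of non-modulus-one weights'' is unjustified. (The conclusion can still be rescued there, but by a different mechanism: every eigenvalue of $f$ on $V_i$ is a unit lying in the splitting field $K$, whose proper subfields all have even degree in these cases, so every non-modulus-one eigenvalue has even degree over $\boldQ$ and every irreducible factor of the quotient's characteristic polynomial has even degree.) In the prime-degree irreducible case your $p$-cycle argument does close, but only after one knows full rank --- which the paper gets from the permutation representation being trivial-plus-$\boldQ$-irreducible (Proposition \ref{full rank}\eqref{irr-rep}, used in Corollary \ref{prime-dim}) --- so that $\tau$ fixing the eigenvalue $\alpha^{\mathbf a}$ forces $\tau\mathbf a=\mathbf a$. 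To repair the proposal, insert the full-rank input where it is available (prime degree; two-transitive $G$) and replace the orbit-size count by the eigenvalue-degree count in the non-full-rank quartic cases; this is essentially how the paper proceeds, via Corollaries \ref{prime-dim} and \ref{dimensions} rather than a weight-space decomposition.
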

The results of \cite{mainkar-06b} give 
an alternate proof of this theorem. 

One way to approach the classification problem is to fix the field in
which the spectrum of an Anosov automorphism 
lies.  The following theorem describes all
Anosov automorphisms whose spectrum lies in a quadratic extension of
$\boldQ.$
\begin{thm}\label{spectrum}  
Let $f$ be a semisimple Anosov automorphism of a two-step nilpotent Lie
algebra $\frakn.$  Let $\Lambda \subset \boldR$ denote the spectrum of $f,$
and let $K$ denote the finite extension $\boldQ(\Lambda)$  of $\boldQ.$
If $K$  is a quadratic extension of $\boldQ,$ 
then  $\frakn$ is one of the Anosov
Lie algebras defined in Definition \ref{graph}.  
\end{thm}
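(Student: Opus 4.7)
The plan is to reduce to the Auslander--Scheuneman framework and then exploit the fact that the Galois group of $p_1$ has order at most two. By the correspondence recalled in the introduction, I may realize $\frakn$ as $\frakf_{n,2}/\fraki$ where $\fraki \subseteq V_2 = [\frakf_{n,2},\frakf_{n,2}]$ satisfies the four Auslander--Scheuneman conditions, and $f$ is induced by $f^A$ for a semisimple $A\in GL_n(\boldZ)$ whose characteristic polynomial $p_1$ has splitting field $K$. Since $[K:\boldQ]=2$, the Galois group $G=\Gal(K/\boldQ)=\{1,\sigma\}$ is cyclic of order $2$, and $p_1$ factors over $\boldQ$ into linear and quadratic irreducibles. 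I would fix an eigenbasis of $V_1(K)$ adapted to this decomposition: rational eigenvectors $e_1,\ldots,e_k$ for rational eigenvalues $\lambda_1,\ldots,\lambda_k$, together with conjugate pairs $(u_j,\sigma u_j)$ for $j=1,\ldots,m$, whose eigenvalues are conjugate pairs $(\mu_j,\sigma\mu_j)$.

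Next I would apply Theorem \ref{actions} to both $V_1$ and $V_2$. On $V_1$, the minimal rational invariant subspaces are $G$-orbits in $V_1(K)$, hence of dimension $1$ or $2$; these become the ``vertex set'' for the graph construction. On $V_2(K) = \Lambda^2 V_1(K)$, the induced eigenbasis consists of wedges of the chosen eigenvectors, and each is acted on by $\sigma$ by permutation. Consequently every $G$-orbit in $V_2(K)$ has size $1$ or $2$, and each spans a minimal rational invariant subspace of dimension $1$ or $2$ on which $f^A$ acts with eigenvalues $\lambda_i\lambda_{i'}$, $\lambda_i\mu_j$, $\mu_j\mu_{j'}$, $\mu_j\sigma\mu_{j'}$, or $\mu_j\sigma\mu_j$ as appropriate. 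The ideal $\fraki$ is a direct sum of such minimal subspaces.

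Then I would use the Anosov condition together with the integrality condition to determine which of these minimal subspaces must lie in $\fraki$ and which must survive in $\frakn$. Any minimal subspace whose eigenvalues all have modulus one is forced into $\fraki$ by condition (4), while any surviving minimal subspace must contribute only eigenvalues of modulus different from one. The products $\mu_j\sigma\mu_j = N_{K/\boldQ}(\mu_j)$ are rational integers, and unimodularity of $f^A$ on $V_1$ forces these norms to be $\pm 1$; thus each ``diagonal'' bracket $[u_j,\sigma u_j]$ sits in an eigenspace of modulus one and must lie in $\fraki$. The remaining modulus-one constraints reduce, via Proposition \ref{full rank} and Lemma \ref{how it can factor}, to finitely many multiplicative relations among the $\lambda_i$ and $\mu_j$ that can be enumerated.

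Finally, I would package the surviving brackets as a graph whose vertex set is the collection of minimal rational invariant subspaces of $V_1$ (some of dimension $1$, some of dimension $2$) and whose edges record the bracket relations not killed by $\fraki$, verifying that the resulting quotient coincides with one of the graph Anosov Lie algebras of Definition \ref{graph}. The main obstacle is the bookkeeping in this last step: ensuring that after collapsing all forced vanishings the bracket structure of $\frakn$ depends only on the combinatorial graph, independently of the particular choice of $A$ and the ambient $\frakf_{n,2}$, and showing that every graph arising this way really does appear in Definition \ref{graph}. I expect the norm/trace constraints in a real quadratic field to tightly control the allowed patterns, so the classification will fall out of a short case analysis once Steps one through three are in place.
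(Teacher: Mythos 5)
The paper proves this statement in two lines: since the spectrum lies in a quadratic field and every eigenvalue of $f$ on the first step is an algebraic unit of modulus $\ne 1$, the polynomial $p_1$ is a product of irreducible quadratics, and then Theorem \ref{2+2+...} (the classification of two-step Anosov algebras with $p_1$ a product of quadratics) gives the conclusion. Your proposal does not take this route; it attempts to redo that classification from scratch, and the decisive part is missing. The step you defer as ``bookkeeping'' --- determining exactly which rational invariant subspaces of $V_2$ can constitute $\fraki$ and matching the quotient with an ideal $\fraki(S_1,S_2)$ of Definition \ref{graph} --- is precisely the content of the proof of Theorem \ref{2+2+...}: one writes all eigenvalues as powers of a fundamental unit $\eta$ of the real quadratic field, computes that each $r_i\wedge r_j$ factors over $\boldZ$ into the two quadratics $(x-\eta^{b_i+b_j})(x-\eta^{-b_i-b_j})$ and $(x-\eta^{b_i-b_j})(x-\eta^{b_j-b_i})$, and concludes that $\fraki\cap[E_i,E_j]$ can only be $\{0\}$, $E_{i,j}$, $E_{i,j}'$, or $[E_i,E_j]$. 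Your appeal to Proposition \ref{full rank} and Lemma \ref{how it can factor} cannot substitute for this: those results are stated for $p_1$ irreducible (full rank, transitivity of $G$ on the roots), whereas here $p_1$ is reducible with Galois group of order two, so the ``finitely many multiplicative relations'' you invoke are exactly what still has to be worked out, via the unit-group structure of the real quadratic field.

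There is a second, smaller gap: you carry rational eigenvalues $\lambda_1,\ldots,\lambda_k$ (linear factors of $p_1$) through the whole argument and even admit one-dimensional ``vertices'' in your final graph, but Definition \ref{graph} has no room for them --- its first step is spanned entirely by conjugate pairs. For the conclusion to land in Definition \ref{graph} you must show $k=0$: a rational eigenvalue of a matrix in $GL_n(\boldZ)$ is an algebraic unit in $\boldQ$, hence $\pm 1$, and since $\fraki\subset[\frakf_{n,2},\frakf_{n,2}]$ every eigenvalue of $f$ on $V_1$ survives in $\frakn$, contradicting hyperbolicity. This is easy but it is not in your write-up, and without it your final identification step would fail as stated. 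The earlier parts of your plan (reduction to the Auslander--Scheuneman setting, the order-two Galois action on $V_2(K)$ via Theorem \ref{actions}, and forcing the diagonal brackets $[u_j,\sigma u_j]$ into $\fraki$ because their eigenvalues are norms $\pm 1$) are sound and parallel the paper's Theorem \ref{2+2+...}; what is missing is the actual classification of the off-diagonal blocks, which is where the theorem's content lies.
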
 
The paper is organized as follows. 
 In Section \ref{preliminaries},
 we review background material on nilpotent Lie algebras, 
Anosov automorphisms and algebraic numbers, and  we define the
$r$-tuple of polynomials associated to an Anosov automorphism
of a Lie algebra.  In Section \ref{polynomials}, we describe properties
of the $r$-tuple of polynomials,  such
as their reducibility and their Galois groups. In Proposition \ref{full rank},
we consider the set of roots of an Anosov polynomial
and  describe  multiplicative relationships among them; this 
number-theoretic result may be
interesting in its own right. 
In Section \ref{action-section},  we associate  to an
automorphism $f$ of a free nilpotent Lie algebra  $\frakf_{n,r}$
 the action of a Galois group $G,$ and in Theorem \ref{actions} we
relate rational invariant subspaces of   $\frakf_{n,r}$ to the orbits of $G.$   In Section \ref{symmetric-cyclic} we consider
Anosov Lie algebras for which the associated
 Galois group  is symmetric or cyclic.
Finally, in Section \ref{2&3}, we apply the results from
 previous sections to the problem of classification
of Anosov Lie algebras whose associated polynomial $p_1$ 
has   small degree.  
Although the  theorems we have stated above follow from various
results distributed throughout  this work, for the sake of clarity, 
in  Section \ref{summary} we provide
self-contained proofs of the theorems.

 \thanks{ This
project was initiated through discussions with Ralf Spatzier  during a
visit to the University of Michigan funded by the University of
Michigan's NSF Advance grant \#0123571.  The author is  grateful to Ralf
Spatzier for  his interest and encouragement, and many useful questions and 
comments.  Peter Trapa, Roger Alperin and Raz Stowe
also provided helpful input.  }

\section{Preliminaries}\label{preliminaries} In this section, we
describe the  structure of free nilpotent Lie algebras and their
automorphisms, and we review
some concepts from number theory that we will use later.  We conclude with
some examples to illustrate the concepts presented. 

\subsection{Nilpotent Lie algebras}\label{free-section} Let $\frakn$
be a Lie algebra defined over field $K.$  
The \textit{central  descending series} for
$\frakn$ is defined by $\frakn^0 = \frakn,$  and $\frakn^i =
[\frakn,\frakn^{i-1}]$ for $i \ge 1$.  If $\frakn^r = 0$ and
$\frakn^{r-1} \ne 0,$ then $\frakn$ is said to be  $r$-\textit{step
nilpotent}.  When $\frakn$ is a nilpotent Lie 
algebra defined over field $K$ and $n_i$ is the dimension of
the vector space $\frakn^i / \frakn^{i - 1}$
over $K,$ then $(n_1, n_2, \ldots, n_r)$  is called
the \textit{type} of $\frakn$.  

The \textit{free $r$-step nilpotent Lie algebra on $n$ generators over the field
$K$,} denoted
$\frakf_{n,r}(K),$ is defined to be the quotient  algebra $\frakf_n(K) /
\frakf_n^{r + 1}(K),$ where  $\frakf_n(K)$ is the free nilpotent Lie algebra
on $n$ generators over $K.$  Given  a  set $\calB_1$
of $n$ generators, the free nilpotent Lie algebra  $\frakf_{n,r}(K)$
can be written as the direct sum $V_1(K) \oplus \cdots \oplus V_r(K),$ where
$V_1(K)$ is defined to be the span of $\calB_1$ 
over $K$ and for $i = 2, \ldots, r,$  the
subspace $V_i(K)$ is defined to be the span over $K$ of 
$i$-fold brackets of the generators.  We will call the space 
$V_i(K)$ the {\em $i$th step} of $\frakf_{n,r}(K)$ without always
explicitly mentioning the dependence on $\calB_1.$  
When the field $K$ has characteristic zero, we 
 identify the prime subfield of $K$ with $\boldQ.$ 
For our purposes, fields  that  we consider
 will be intermediate to $\boldQ$ and $\boldC:$
 one of $\boldQ, \boldR, \boldC,$ or the splitting field for a 
polynomial in $\boldZ[x].$ 
 We will always
assume that a generating set $\calB_1$ for
a free nilpotent Lie algebra $\frakf_{n,r}(K)$ has cardinality $n.$

The most natural basis to use for a free nilpotent Lie algebra 
is a Hall basis.  Let $\bfx_1, \bfx_2, \ldots, \bfx_n$ be $n$ generators
for $\frakf_{n,r}(K)$.   We call these the {\em standard monomials of
degree one.}  \textit{Standard monomials of degree} $n$ are defined
inductively:     After the monomials of degree $n-1$ and less  have
been defined, we define an order relation $<$ on them so that if
$\degree u < \degree v,$  then $u < v.$ Any linear combination  of
monomials of degree $i$ will be said to be of degree $i.$ If $u$ has
degree $i$ and $v$ has degree $j,$ and $i+j=k,$  we define $[u,v]$ to
be a standard monomial of degree $k$ if  $u$ and $v$ are standard
monomials and $u > v,$ and if $u = [x,y]$ is the form of the standard
monomial $u,$ then $v \ge y.$ The standard monomials of degree $r$ or
less form a basis  for $\frakf_{n,r}(K),$ called the {\em Hall basis}
(\cite{hall50}).    For $i=1, \ldots, r,$ the subset $\calB_i = \calB
\cap V_i(K)$ of the basis $\calB$ is a basis for the  $i$th step
$V_i(K)$ of $\frakf_{n,r}(K)$ consisting of 
elements of the Hall basis 
of degree $i.$  To each monomial of degree $i$ we can also associate a
{\em Hall word} of length $i$  from a given alphabet 
 $\alpha_1, \ldots, \alpha_n$ of
$n$ letters; for example, $[[\bfx_3,\bfx_1], \bfx_2]$  becomes
the word $\alpha_3 \alpha_1 \alpha_2.$ 

Suppose $\frakg$ is a Lie algebra defined over a field  
$K$ of characteristic zero.  Suppose that $\calB$ is a basis of
$\frakg$ having rational structure constants.  The basis $\calB$ determines
a {\em rational structure} on $\frakg.$ A subspace $E$ of $\frakg$ spanned by
$\boldQ$-linear combinations of elements of $\calB$ is called  a {\em
rational subspace} for this rational structure.
 Since the structure constants for the free  nilpotent 
Lie algebra $\frakf_{n,r}(K)$ relative to 
a Hall basis $\calB$ are rational,  a
Hall basis  $\calB$ for $\frakf_{n,r}(K)$ defines a rational structure
on  $\frakf_{n,r}(K).$ 

\begin{example}\label{2,3-Hall words} Let $\calC_1 = \{
\bfz_i\}_{i=1}^n$ be a set of $n$ generators for the  free $r$-step
nilpotent Lie algebra $\frakf_{n,r}(K)$ 
on $n$ generators over a field $K$ 
and let  $\calC =
\cup_{i=1}^r \calC_i$ be the Hall basis determined by $\calC_1.$
Elements of  $ \calC_2,$  where  $r \ge 2,$  are of
the form $[\bfz_i,\bfz_j]$ with $i > j,$ hence the dimension of
$V_2(K)$ over $K$ is $  (\begin{smallmatrix} n
\\ 2 \end{smallmatrix}).$ 
When  $r \ge 3,$ from the definition of Hall monomial, elements in the
set $ \calC_3$ for $\frakf_{n,r}(K)$  are of the form $[[\bfz_i,\bfz_j],
\bfz_i]$ or  $[[\bfz_i,\bfz_j],
\bfz_j]$ with $i > j$ or, if $n \ge 3,$ of the form
$[[\bfz_i,\bfz_j],\bfz_k]$ with $i,j,k$ distinct and $i$ and $k$
greater than $j.$  There are  $ n(n-1)$ standard Hall monomials of the
first type, and when $n \ge 3,$ there are $2  (\begin{smallmatrix} n
\\ 3 \end{smallmatrix})$ standard  Hall monomials of the second type,
for a dimensional total  of $\smallfrac{1}{3}(n+1)n(n-1)$ for the
third step $V_3(K)$ of $\frakf_{n,r}(K).$   We let
$ \calC_3^\prime$ denote the set of   standard Hall monomials of the
first type, and let  $ \calC_3^{\prime \prime}$ denote the set of
standard Hall monomials of the second type:
\begin{align*} 
\calC_3^\prime &= \cup_{1 \le j < i  \le n} \{[[\bfz_i,\bfz_j],\bfz_i],
[[\bfz_i,\bfz_j],\bfz_j]   \}, 
   \quad \text{and}\\ 
\calC_3^{\prime \prime} &= \cup_{1 \le j < i < k \le n} \{[[\bfz_i,\bfz_j],\bfz_k],
[[\bfz_k,\bfz_j],\bfz_i]  \}.
\end{align*}  Define subspaces $F_1(K)$ and $F_2(K)$ of $\frakf_{n,r}(K)$ by 
\begin{equation}
\label{F defs} 
F_1(K) = \myspan_K  \calC_3^\prime, \qquad \text{and} 
\quad F_2(K) = \myspan_K  \calC_3^{\prime
\prime}. \end{equation} The subspace $V_3(K)$ is the direct sum of
$F_1(K)$ and $F_2(K)$   since $\calC_3$ spans $V_3(K)$ and is the
disjoint union of $\calC_3^\prime$ and $\calC_3^{\prime \prime}.$
\end{example}

\subsection{Anosov automorphisms}\label{setup}

As we discussed previously, every Anosov automorphism can be represented
in terms of a matrix $A$ in $GL_n(\boldZ),$   an automorphism $f^A$
of $\frakf_{n,r}$ induced by $A$ and an ideal $\fraki < \frakf_{n,r}$
satisfying the four Auslander-Scheuneman conditions.  In this section
we spell out some of the details involved in such a representation. 

Let
 $\frakf_{n,r}(K) = \oplus_{i=1}^r V_i(K)$ be the free  $r$-step
nilpotent Lie algebra over field $K$ with
 a set $\calB_1$ of $n$  generators.  
Let $\calB = \cup_{i=1}^r \calB_i$ be the Hall basis
 determined by $\calB_1.$  
Let $A$ be a  matrix in
$GL_{n}(\boldZ)$ having no eigenvalues of modulus one. Together 
the matrix $A$ and 
and the basis $\calB_1$  define a linear map $f_1: V_1(K) \to V_1(K).$  The
map $f_1$ induces an automorphism $ f^A_K$ 
of  $\frakf_{n,r}(K)$ that when
restricted to $V_1(K)$ equals $f_1.$  For all  $i=1, \ldots, r,$ the
restriction $f_i$ of $f^A_K$ to $V_i(K)$ can be  represented with respect
to the basis $\calB_i$ of $V_i(K)$ by a matrix $A_i$ having integer
entries that are independent of the field $K.$   

For $i=1, \ldots, r,$ let $p_i$ denote the characteristic
polynomial of $A_i.$ We define the {\em $r$-tuple of polynomials
associated to $f$} to be $(p_1, \ldots, p_r).$  Note that
 all of the polynomials are monic with integer coefficients, and
 there is no dependence on $K$ in defining the polynomials: either
the matrix $A$ or the polynomial $p_1$ alone is enough to  
uniquely define the $r$-tuple $(p_1, \ldots, p_r).$ 

A Lie algebra admits an Anosov automorphism  if and only if it  admits
a semisimple Anosov automorphism  (\cite{auslander-scheuneman}). 
 Assume
that the linear map $f_1: V_1(L) \to V_1(L)$ defined by 
$A \in GL_n(\boldZ)$ and rational basis $\calB$
is diagonalizable over the field $L$ (where $\mychar L = 0$). 
The vector space $V_1(L)$
 can be  decomposed into the direct sum of minimal
nontrivial rational $f_1$-invariant  subspaces $E_1, \ldots, E_s$.
For each rational invariant subspace $E_j, j = 1, \ldots, s,$
the restriction of  $f_1$ to $E_j$ is diagonalizable over $L.$
Hence there is a basis  $\calC_1 = \{\bfz_1, \ldots, \bfz_n\}$  of 
$V_1(L)$ consisting of eigenvectors of $f_1$
 with each eigenvector properly contained in
one of  the  subspaces $E_1, \ldots, E_s.$  Let
$\calC$ be the Hall basis of $\frakf_{n,r}(L)$ determined by $\calC_1.$
We will call such an eigenvector basis for an automorphism 
$f$ of   $\frakf_{n,r}(L)$  {\em compatible with  the
rational structure}, and in the future, when we use 
eigenvector bases for free nilpotent  Lie algebras we will always
choose them to be compatible with the rational structure determined
by a fixed Hall basis.  

\begin{notation}
 We shall use $\calB$ to
denote the Hall basis of a free nilpotent Lie algebra 
$\frakf_{n,r}(K)$ that determines the
rational structure and that with a matrix in $GL_n(\boldZ)$
defines the Anosov automorphism,
 while we will use $\calC$ to denote a Hall basis that 
diagonalizes the Anosov automorphism. 

Suppose that $K$ has characteristic zero
and $\calB$ is a fixed Hall basis of $\frakf_{n,r}(K),$ and identify the 
  prime subfield of $K$ with $\boldQ.$ 
We will use $\frakf_{n,r}(\boldQ)$
to denote the subset of $\frakf_{n,r}(K)$ that is 
the   $\boldQ$-span   of $\calB$ in  $\frakf_{n,r}(K).$ 
\end{notation}

At times we will move between free nilpotent Lie algebras $\frakf_{n,r}(K)$
and  $\frakf_{n,r}(L)$ defined over different field extensions 
$K$ and $L$ of $\boldQ.$  We define a correspondence
between rational  $f^A_K$-invariant subspaces of  $\frakf_{n,r}(K)$
and rational  $f^A_L$-invariant subspaces of  $\frakf_{n,r}(L):$
\begin{definition}\label{correspond}
Let $K$ and $L$ be fields with characteristic zero.
 Let  $\calB_1(K)$  and  $\calB_1(L)$ be generating sets,  
both of cardinality $n,$ for free nilpotent Lie algebras
 $\frakf_{n,r}(K)$ and $\frakf_{n,r}(L)$
respectively, and let  $\calB(K)$  and  $\calB(L)$ be the Hall bases 
defined by  $\calB_1(K)$  and  $\calB_1(L)$ respectively.  
  A bijection $i_1: \calB_1(K) \to \calB_1(L)$  of the generating sets  
naturally induces a bijection $i: \calB(K) \to \calB(L)$
 of the Hall bases, and this in turn defines an isomorphism 
 $\overline{i}$ from  $\frakf_{n,r}(\boldQ) < \frakf_{n,r}(K)$
to  $\frakf_{n,r}(\boldQ) < \frakf_{n,r}(L),$ where 
 $\frakf_{n,r}(\boldQ)$ denotes the $\boldQ$-span of the fixed Hall 
basis.   

Endow $\frakf_{n,r}(K)$ and $\frakf_{n,r}(L)$ with the rational structures
defined by  $\calB(K)$  and  $\calB(L)$ respectively.
  Given a matrix $A \in GL_n(\boldZ),$
let 
maps $f^A_K \in \Aut(\frakf_{n,r}(K))$ and $f^A_L \in \Aut(\frakf_{n,r}(L))$
 be defined by $A$ and  
  $\calB_1(K)$  and  $\calB_1(L)$ respectively. Observe that 
$[f^A_K]_{\calB(K)} = [f^A_L]_{\calB(L)} \in M_N(\boldZ),$ where
$N = \dim \frakf_{n,r}.$

Let $E$ be a rational $f^A_K$-invariant  subspace of  $\frakf_{n,r}(K)$
spanned by vectors $\bfv_1, \ldots, \bfv_m$ in $\frakf_{n,r}(K);$ i.e.
coordinates of $\bfv_1, \ldots, \bfv_m$ with respect to $\calB(K)$
are in $\boldQ.$
Define the subspace $E^L$ of $\frakf_{n,r}(L)$  to be the 
$L$-span of the  vectors $\overline{i}(\bfv_1), \ldots, 
\overline{i}(\bfv_m)$ in  $\frakf_{n,r}(L).$  Clearly 
 $E^L$ is rational and  $f^A_L$-invariant  subspace of  $\frakf_{n,r}(K).$
\end{definition}

\begin{remark}\label{f^2}  
Observe that $\fraki$ satisfies the Auslander-Scheuneman
conditions for a semisimple automorphism $f$ of a free nilpotent Lie algebra,
then it satisfies the conditions for $f^2.$  Therefore,
when seeking ideals of a free nilpotent Lie algebra satisfying the
four conditions for an automorphism $f,$ 
by moving to $f^2$ if necessary, we may assume
that the eigenvalues of the automorphism have product 1, and
that all the real eigenvalues are positive.
\end{remark}

The next example clarifies some of our definitions and notation.  
\begin{example}\label{free-3,2} 
Let $\frakf_{3,2}(\boldR) = V_1(\boldR) \oplus V_2(\boldR)$ be
the free two-step nilpotent Lie algebra on three generators $\bfx_1,
\bfx_2,$ and $\bfx_3.$  These three generators span the subspace
$V_1(\boldR).$ The Hall words of length two  are  $\bfx_1^\prime = [\bfx_3,
\bfx_2], \bfx_2^\prime = [\bfx_3,\bfx_1],$ and $\bfx_3^\prime =
[\bfx_2,\bfx_1];$ they span $V_2(\boldR).$  The union $\calB$ of  $\calB_1 =
\{ \bfx_1, \bfx_2, \bfx_3\}$ and  $\calB_2 = \{ \bfx_1^\prime,
\bfx_2^\prime, \bfx_3^\prime\}$ is the Hall basis determined by
$\bfx_1, \bfx_2, \bfx_3.$

Now let $A = A_1$ be a $3 \times 3$ matrix in $SL_3(\boldZ)$ that has
eigenvalues $\alpha_1, \alpha_2, \alpha_3,$ none of which
 has  modulus one.  The matrix $A$  and the basis
$\calB_1$ define the linear map $f_1 : V_1 \to V_1.$ The linear map
$f_1$ induces an automorphism  ${f^A}$ of  $\frakf_{3,2}(\boldR).$  Let  
$A_2$ denote the matrix representing the restriction of 
$f^A$ to ${V_2(\boldR)}$ with respect to
the basis ${\calB_2}.$  

The matrix $A_1$ has characteristic polynomial
\[ p_1(x) = (x - \alpha_1)(x - \alpha_2)(x-\alpha_3).   \] 
A short calculation shows that $A_2$ is similar to $A_1^{-1}$ and 
has characteristic polynomial  
\[
  p_2(x) =  (x - \alpha_2 \alpha_3)(x - \alpha_1
\alpha_3)(x-\alpha_1 \alpha_2) =  (x - \alpha_1^{-1})(x - \alpha_2^{-1})
(x-\alpha_3^{-1}) . \]
Neither $A_1$ or $A_2$ has any eigenvalues of modulus one, so ${f^A}$
is an Anosov automorphism of $\frakf_{3,2}(\boldR).$    
\end{example}

\subsection{Polynomials and algebraic
numbers}\label{polynomials-numbers}

 We will call a monic polynomial \textit{Anosov} if has integer
coefficients,  it has constant term 
$\pm 1$,  and it has no roots with modulus one.
The roots of an Anosov polynomial are algebraic units.

We can identify each monic polynomial $p$ in $\boldZ[x]$ of degree $n$
with an automorphism of the free nilpotent Lie algebra  $\frakf_{n,r}(\boldR)
= \oplus_{i = 1}^r V_i(\boldR)$ with generating set $\calB_1 = \{ \bfx_1, \ldots,
\bfx_n\}.$  Suppose $p = q_1 q_2 \cdots q_s$ is a factorization of
$p$ into  irreducibles.  Let $A_{i}$ be the companion matrix for
$q_i,$ for $i=1, \ldots, s,$ 
 and define  the  matrix $A_p$ to be block diagonal with the
matrices  $A_{1}, \ldots, A_{s}$ down the diagonal.  As 
already described,  the matrix $A_p$  and the  basis $\calB_1$ 
together define an automorphism of the free
$r$-step nilpotent Lie algebra  on $n$ generators. 

If $E$ is a nontrivial rational invariant subspace for an   Anosov
automorphism $f$  of an Anosov Lie algebra,
we will let  $p_E$ denote the characteristic polynomial for the
restriction of $f$ to $E.$ If $p$ and $q$ are polynomials in
$\boldZ[x],$ we define the polynomial $p \wedge q$ to be the
characteristic polynomial of the matrix $A_p \wedge A_q.$  

Next we illustrate how an Anosov polynomial determines a class of
Anosov  automorphisms. 
\begin{example}\label{qr} Let $p$ be an Anosov polynomial of degree
$n$ that is  a product of two irreducible  factors $r_1$  and $r_2$ of
degrees $d_1$ and $d_2$ respectively.  The companion matrices $B_1$
and $B_2$ to  the polynomials $r_1$ and $r_2$ are in $GL_{d_1}(\boldZ)$
and  $GL_{d_2}(\boldZ)$ respectively.  Putting these matrices together
in a block diagonal matrix gives a matrix 
\[ A_p = A_1 = \begin{bmatrix} B_1 & 0 \\ 0 & B_2 \end{bmatrix}\] in
$GL_{n}(\boldZ)$ with characteristic polynomial $p.$ 

Let $\frakf_{n,2}(\boldR) = V_1(\boldR) 
\oplus V_2(\boldR)$ be the real free two-step  nilpotent
Lie algebra on $n$ generators with generating set $\calB_1.$ The
matrix $A_1$  and the basis $\calB_1$ of $V_1(\boldR)$ define a linear map
$f_1: V_1(\boldR) \to V_1(\boldR)$ which induces an automorphism $f^A$ of
$\frakf_{n,2}(\boldR).$ Let $f_2 =  f^A|_{V_2(\boldR)}.$  The map $f_2$ may be
represented by a matrix $A_2$ that  is block diagonal with matrices
$B_1 \wedge B_1, B_1 \wedge B_2$ and $B_2 \wedge B_2$ along the
diagonal.   Let $\alpha_1, \ldots, \alpha_{d_1}$ denote the roots of
$r_1$ and let
 $\beta_1, \ldots, \beta_{d_2}$ denote the roots of $r_2.$ 
It can be shown that the matrix $A_2$ has characteristic
polynomial  
\[ p_2 = (r_1 \wedge r_1)(r_1 \wedge r_2)(r_2 \wedge r_2),\] where
\begin{align*} 
(r_1 \wedge r_1)(x) &= 
\prod_{1 \le i < j \le d_1} (x -\alpha_i \alpha_j), \\ 
(r_1 \wedge r_2)(x) &= 
\prod_{\substack{1 \le i \le d_1 \\ 1 \le  j \le d_2}} (x - \alpha_i \beta_j), \quad \text{and} \\
 (r_2 \wedge r_2)(x) &= \prod_{1 \le i < j \le d_2} (x - \beta_i \beta_j).
\end{align*}
As long as none of the roots of $p_2$ have modulus one, the map 
$f^A$ is Anosov.   
\end{example}

Later we will need to know when polynomials in $\boldZ[x]$  
have roots of modulus one and will use the following observation.  
\begin{remark}\label{roots of modulus one}   
Suppose that an irreducible polynomial  $p$ in $\boldZ[x]$ of degree $n$
has a root $\alpha$ with modulus one.
Then the complex  conjugate $\bar \alpha = \alpha^{-1}$ is also a
root of $p,$ so  $p$ is self-reciprocal and $n$ is even. If  $q$  is the
minimal polynomial of  $\alpha + \alpha^{-1},$ then $p(x) = x^{n/2} q(x +
1/x).$   Hence, the Galois group for $p$ is  a wreath product of  $C_2$
and the Galois group for the  polynomial $q.$ 
\end{remark}

\section{Polynomials associated to automorphisms}\label{polynomials}

\subsection{Properties of the $r$-tuple of characteristic polynomials}
\label{properties} Now we present some  properties of the tuple of
polynomials associated to an  automorphism of a free nilpotent Lie
algebra. 
\begin{proposition}\label{p-properties}  Let $f^A$  be a  semisimple
automorphism of the free nilpotent Lie algebra  $\frakf_{n,r}(\boldR)
= \oplus_{i=1}^r V_i(\boldR)$  defined by a matrix $A$ in
$GL_n(\boldZ)$ and the Hall basis $\calB$  defined by  generating set
$\calB_1 = \{ \bfx_i\}_{i=1}^{n}.$   Let $(p_1,  p_2, \ldots, p_r)$
be the $r$-tuple of polynomials associated to $f,$ let $\alpha_1,
\ldots, \alpha_n$ denote the roots of $p_1,$ and let $K$ denote the
splitting field for $p_1.$  Let $\calC_1 = \{\bfz_i\}_{i=1}^n$  be a
 $f^A_K$-eigenvector basis of $V_1(K) <  \frakf_{n,r}(K)$ compatible with the
rational structure defined by $\calB$   and let $\calC= \cup_{i=1}^r
\calC_i$ be the Hall basis of  $\frakf_{n,r}(K)$ associated to
$\calC_1.$
\begin{enumerate}
\item{Each standard Hall monomial of degree $i$ on $\bfz_1, \ldots,
\bfz_n$ in the set   $\calC_i$ is an eigenvector for 
$ f_{K}^A|_{V_i(K)}$ whose
eigenvalue is  the corresponding Hall word in   $\alpha_1,
\ldots, \alpha_n.$ }\label{eval-tilde-pi}
\item{For $i=1, \ldots, r,$ let $ p_i = r_{i,1} \cdots r_{i,d_i}$ be
a factorization of $ p_i$  into $d_i$ irreducible monic polynomials in
$\boldZ[x]$, and let  $V_i(\boldR) = \oplus_{j=1}^{e_i}E_{i,j}$ be a
decomposition of $V_i(\boldR)$ into $e_i$  minimal nontrivial rational 
$f^A$-invariant subspaces.   For all $i =1, \ldots, r,$
 $d_i = e_i$  and the
  map  that sends $E_{ij}$ to the  characteristic
polynomial of  $f|_{E_{ij}}$ is a one-to-one correspondence between
the set of rational subspaces $\{E_{ij}\}_{j = 1}^{d_i}$ and the set of
factors $\{ r_{i,j} \, : \, j =1, \ldots,
e_i \}$ of $p_i.$}\label{rat-subsp}
\end{enumerate}
\end{proposition}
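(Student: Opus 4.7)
The plan is to prove part (\ref{eval-tilde-pi}) by a straightforward induction on the degree of standard Hall monomials, exploiting that $f^A_K$ is a Lie algebra automorphism, and to prove part (\ref{rat-subsp}) via the primary decomposition of $V_i(\boldR)$ regarded as a $\boldQ[x]$-module.

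For part (\ref{eval-tilde-pi}), the base case $i = 1$ holds by the choice of $\calC_1$ as an eigenvector basis: after relabeling if necessary, $f^A_K(\bfz_j) = \alpha_j \, \bfz_j$ for $j = 1, \ldots, n$. For the inductive step, every standard Hall monomial $m$ of degree $i > 1$ has the form $m = [u, v]$ where $u$ and $v$ are standard Hall monomials of strictly smaller degree. The automorphism identity $f^A_K([u, v]) = [f^A_K(u), f^A_K(v)]$ combined with the inductive hypotheses $f^A_K(u) = \alpha \, u$ and $f^A_K(v) = \beta \, v$ gives $f^A_K(m) = \alpha \beta \, m$, and by construction $\alpha \beta$ is precisely the Hall word in the $\alpha_j$'s corresponding to $m$.

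For part (\ref{rat-subsp}), I first use part (\ref{eval-tilde-pi}) to observe that $\calC_i$ is an eigenvector basis of $V_i(K)$ for $f^A_K|_{V_i(K)}$, so the restriction $f^A|_{V_i(\boldR)}$ is semisimple. I then regard $V_i(\boldR)$ as a $\boldQ[x]$-module with $x$ acting by $f^A|_{V_i(\boldR)}$. Semisimplicity means the minimal polynomial has no repeated irreducible factor, so the module splits into isotypic components, one for each distinct irreducible factor $r$ of $p_i$; the $r$-component is isomorphic to a direct sum of $a(r)$ copies of the simple module $\boldQ[x]/(r)$, where $a(r)$ is the multiplicity of $r$ in the factorization $p_i = r_{i,1} \cdots r_{i,d_i}$. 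A rational $f^A$-invariant subspace of $V_i(\boldR)$ is the same thing as a $\boldQ[x]$-submodule, so a minimal nontrivial one is exactly a simple $\boldQ[x]$-submodule; any such submodule is isomorphic to some $\boldQ[x]/(r)$ with $r \mid p_i$ irreducible, and has characteristic polynomial $r$. A decomposition $V_i(\boldR) = \oplus_{j=1}^{e_i} E_{i,j}$ into minimal nontrivial rational $f^A$-invariant subspaces then restricts to a decomposition of each isotypic component into $a(r)$ simple summands (by dimension counting, or by Krull--Schmidt), yielding $e_i = \sum_r a(r) = d_i$ and matching the multiset $\{p_{E_{i,j}}\}_{j=1}^{e_i}$ with $\{r_{i,j}\}_{j=1}^{d_i}$.

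No step presents a serious difficulty; the point to handle carefully is the bookkeeping when $p_i$ has repeated irreducible factors, where the asserted one-to-one correspondence is a bijection of indexed families (equivalently, multisets) rather than of the underlying sets of distinct irreducible factors.
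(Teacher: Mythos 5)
Your proposal is correct and follows essentially the same route as the paper: Part (1) is the same observation that an automorphism sends the Hall monomial $[u,v]$ in the $\bfz_j$'s to the corresponding product of eigenvalues times itself, and your $\boldQ[x]$-module/isotypic-component argument for Part (2) is just the spelled-out version of the paper's appeal to elementary divisors and rational canonical form, with semisimplicity forcing the elementary divisors to be irreducible. (Only cosmetic care is needed: the module in question is really the $\boldQ$-span of $\calB_i$, with rational invariant subspaces of $V_i(\boldR)$ obtained as real spans of its submodules, and your remark about repeated factors giving a multiset matching is a fair reading of the statement.)
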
  

It follows from Part \eqref{eval-tilde-pi} of the proposition
 that  if the matrix $A$ is diagonalizable over $\boldC,$
then the automorphism $f^A$ is semisimple.  In particular, if the polynomial 
$p_1$ is separable over $\boldQ,$ then $f^A$ is semisimple.  

\begin{remark}\label{as cond 2} As a consequence of the third part of
the proposition,  if $f$ is unimodular,  the
characteristic polynomial for the restriction of $f$ to a rational invariant
subspace  $E$ has a
unit constant term, hence  the restriction of $f$ to
any rational invariant subspace $E$ is unimodular. 
Therefore, the second of the four
Auslander-Scheuneman conditions is automatic.
\end{remark}

 It is well known that there are no Anosov Lie
algebras of type $(n_1, \ldots, n_r),$ where $n_1 = 2$ and
 $r > 1.$  This follows from Part \ref{eval-tilde-pi}
 of the proposition.  Henceforth we shall only consider
nilpotent Lie algebras where $n_1 \ge 3$ and $r \ge 2.$

\begin{proof}[Proof of Proposition \ref{p-properties}.] 
The first part is elementary.  
 Hall words of degree $i$ in $\bfz_i$ span $V_i(K),$ 
for $i = 1, \ldots,
r.$ Because  $f_{K}^A$ is an automorphism of $\frakf_{n,r}(K),$  
a Hall word in $\bfz_1, \ldots,
\bfz_n$ is an eigenvector for $f_{K}^A$ whose 
eigenvalue is the same Hall word in $\alpha_1, \ldots, \alpha_n.$

The last part of the proposition follows from the existence of 
the elementary divisors rational canonical forms for matrices. 
The fact that the matrix is semisimple implies that the elementary
divisors are irreducible.   
\end{proof}

\subsection{The polynomials $p_2$ and
$p_3$}\label{p2p3} 

Let $A$ be a semisimple matrix in  $GL_n(\boldZ),$ and let $K$ be the
splitting field for the characteristic polynomial $p_1$ of $A.$ 
Let 
 $f^A$ be the semisimple automorphism of 
 $\frakf_{n,r}(\boldR) = \oplus_{i = 1}^r V_i(\boldR),$ where $n \ge 3,$
induced by  $A$ and a basis
$\calB_1$ for $V_1(\boldR).$   
 Let $\calC_1 = \{\bfz_i\}_{i=1}^n$ be an 
eigenvector basis for $ V_1(K) < \frakf_{n,r}(K)$
compatible with the rational structure determined by 
$\calB_1,$ where $f^A(\bfz_j) = \alpha_j \bfz_j,$ for $j = 1, \ldots, n;$
and let $\calC = \cup_{i=1}^r
\calC_i$ be the Hall basis of $\frakf_{n,r}(K)$ determined by $\calC_1.$

In Example \ref{2,3-Hall words} we described Hall words of length two
and three.  By Proposition \ref{p-properties}, the eigenvalue for 
an element $[\bfz_j,\bfz_i]$ of $\calC_2$ is $\alpha_i\alpha_j,$
so 
\begin{equation}\label{p2-def}  
p_2(x) = \prod_{1 \le j < i \le n} (x - \alpha_i \alpha_j).\end{equation} 
Let $\calC_3^\prime$ and $\calC_3^{\prime
\prime}$ be  as defined in Example \ref{2,3-Hall words}.
  By  Proposition \ref{p-properties}, an element
$[[\bfz_j,\bfz_i],\bfz_j]$
of $\calC_3^\prime$ is an eigenvector for
$f_{K}^A$ with eigenvalue  $\alpha_i \alpha_j^2,$ and an element
$[[\bfz_i,\bfz_j],\bfz_k]$ of $\calC_3^{\prime \prime}$ is an
eigenvector for $f_{K}^A$ with eigenvalue  $\alpha_i \alpha_j \alpha_k.$
Define the polynomials  $q_1$ and $q_2$ by 
\begin{equation}\label{q1,q2} q_1(x) = \prod_{1 \le i , j \le n, i \ne j}(x -
\alpha_i \alpha_j^2), \qquad \text{and} \qquad q_2(x) = \prod_{1 \le
i< j < k \le n}(x - \alpha_i \alpha_j \alpha_k).
\end{equation}
Because they are invariant under the action of $G,$ $q_1$ and $q_2$
have integral coefficients.  
The polynomial  $ p_3 = q_1
q_2^2$ is the characteristic polynomial for the restriction of the
automorphism $f^A_L$ 
to $V_3(L)$ for any extension $L$ of $\boldQ.$

\subsection{Anosov polynomials and their roots} In this section, we
discuss Anosov polynomials and their properties.   
\begin{proposition}\label{anosov poly}
 Let $p_1$ be an Anosov polynomial in
$\boldZ[x]$ of degree $n \ge 3.$   Let $(p_1, \ldots,
p_r)$ be the associated  $r$-tuple of polynomials. 
\begin{enumerate}
\item{
 If  $ p_1$ has
constant term one, then its reciprocal polynomial $( p_1)_R$   is a
factor of $ p_{n-1}$ and $( p_2)_R$  is a factor of $ p_{n-2}$.   If
the constant term of $ p_1$ is  $-1,$  then  $( p_1)_R(-x)$   is a
factor of $ p_{n-1}$ and $( p_2)_R(-x)$ is a factor of $ p_{n-2}$. 
}\label{recip}
\item{The polynomial 
 $p_i$ has at least one root of modulus greater than or equal to
than one and at least one root of modulus less than one
 for all $i \ge 2.$ }\label{root}
\item{If the roots $\alpha_1, \ldots, \alpha_n$ 
of $p_1$ are viewed as indeterminates, then the constant
term of $p_i$ is $(\alpha_1 \cdots \alpha_n)^{D(i)},$ with the
exponent $D(i)$  given by
\[ D(i) =  \frac{1}{ni}\sum_{d|i} \mu(d)
n^{i/d},\] where $\mu$ is the M\"obius function.}\label{dimVi}
\end{enumerate}
\end{proposition}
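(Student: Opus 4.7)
The plan is to handle the three parts in turn, using throughout Proposition~\ref{p-properties}(\ref{eval-tilde-pi}), which identifies the eigenvalues of $f^A_K|_{V_i(K)}$ with Hall words in the roots $\alpha_1,\ldots,\alpha_n$ of $p_1$.

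For part~(\ref{recip}), for each $k \in \{1,\ldots,n\}$ I will exhibit a Hall monomial of degree $n-1$ whose eigenvalue is $\prod_{j\ne k}\alpha_j$. Such a monomial exists because the free Lie algebra on the $n-1$ generators $\{\bfz_j\}_{j\ne k}$ has multilinear component in degree $n-1$ of dimension $(n-2)!\ge 1$ by Witt's formula, so at least one Hall monomial of degree $n-1$ in $\frakf_{n,r}(K)$ is multilinear in exactly those generators. Since $\alpha_1\cdots\alpha_n=(-1)^n c_0$, where $c_0$ is the constant term of $p_1$, this eigenvalue equals $(-1)^n c_0\,\alpha_k^{-1}$, which is a root of $(p_1)_R$ when $(-1)^n c_0=1$ and a root of $(p_1)_R(-x)$ when $(-1)^n c_0=-1$. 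Applying Proposition~\ref{p-properties}(\ref{rat-subsp}) to the rational invariant subspace containing these eigenvectors then forces the corresponding reciprocal polynomial to divide $p_{n-1}$. The claim $(p_2)_R \mid p_{n-2}$ is handled in exactly the same way, starting from multilinear Hall words of degree $n-2$ that omit two generators $\bfz_j,\bfz_k$, whose eigenvalues are $(-1)^n c_0\,(\alpha_j\alpha_k)^{-1}$.

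For part~(\ref{root}), I use the family of Hall monomials $[\bfz_{j_1},\bfz_{j_2},\ldots,\bfz_{j_2}]$ with $i-1$ copies of $\bfz_{j_2}$, whose eigenvalues are $\alpha_{j_1}\alpha_{j_2}^{i-1}$. If every such eigenvalue had modulus $1$, then fixing $j_2$ and varying $j_1$ would force $|\alpha_{j_1}|$ to be constant in $j_1$, so all roots of $p_1$ would share a common modulus $r$ with $r^i=1$; hence $r=1$, contradicting the hyperbolicity of $A$. So some eigenvalue of $f^A|_{V_i}$ has modulus $\ne 1$, and since the product of all eigenvalues on $V_i$ has modulus $|\det A_i|=1$, there must be eigenvalues of modulus both greater than and less than $1$.

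For part~(\ref{dimVi}), $V_i$ is a polynomial $GL_n$-representation via the natural action of $GL_n$ on the generators of $\frakf_{n,r}$, so the multiset of eigenvalues of $f^A|_{V_i}$ is invariant under all permutations of $\alpha_1,\ldots,\alpha_n$. The product of these eigenvalues is an $S_n$-symmetric monomial in $\alpha_1,\ldots,\alpha_n$, hence of the form $(\alpha_1\cdots\alpha_n)^e$; matching total degrees gives $ne=i\dim V_i$, and Witt's formula $\dim V_i=\tfrac{1}{i}\sum_{d\mid i}\mu(d)\,n^{i/d}$ converts this into the stated closed form. I expect the principal obstacle to lie in part~(\ref{recip}): tracking signs through the factor $(-1)^n c_0$ so the dichotomy between $(p_1)_R$ and $(p_1)_R(-x)$ emerges correctly, and verifying divisibility with the right multiplicities rather than merely as equality of root sets, will require a careful application of Proposition~\ref{p-properties}(\ref{rat-subsp}).
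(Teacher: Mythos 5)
Parts (\ref{recip}) and (\ref{root}) of your argument are sound. For (\ref{recip}) you follow essentially the paper's route: the reciprocal of each $\alpha_k$ (resp.\ $\alpha_j\alpha_k$) equals, up to the sign $(-1)^n c_0$, the eigenvalue of a Hall monomial of degree $n-1$ (resp.\ $n-2$) omitting the corresponding generator(s). One quibble: the divisibility with correct multiplicity does not come from Proposition \ref{p-properties}(\ref{rat-subsp}) (that part concerns minimal rational invariant subspaces of a semisimple map and irreducible factors); it comes directly from Proposition \ref{p-properties}(\ref{eval-tilde-pi}), since $p_{n-1}$ is the product of $(x-\lambda_w)$ over the Hall basis of $V_{n-1}$ and distinct omitted indices give distinct basis monomials, so $\prod_k\bigl(x-\prod_{j\ne k}\alpha_j\bigr)$ literally divides it. (Note also that your dichotomy, like the paper's own proof, is governed by the product of the roots $(-1)^n c_0$ rather than by $c_0$ itself as in the literal statement.) For (\ref{root}) you give a cleaner variant of the paper's argument: the paper exhibits an explicit root $\alpha_1^{i-1}\alpha_2$ of modulus less than one by choosing $\alpha_2$ suitably among the remaining roots, whereas you rule out the case that all eigenvalues $\alpha_{j_1}\alpha_{j_2}^{i-1}$ have modulus one and then use $|\det A_i|=1$ (which is legitimate, since $A_i$ and its inverse are integral, or by part (\ref{dimVi})).

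The substantive problem is the last line of your part (\ref{dimVi}). Your degree count is correct: if each $\alpha_j$ occurs with exponent $e$ in the constant term, then $ne=i\dim V_i$, so $e=\frac{i}{n}\dim V_i=\frac{1}{n}\sum_{d\mid i}\mu(d)n^{i/d}$. But that quantity is $i\,D(i)$, not the stated $D(i)=\frac{1}{ni}\sum_{d\mid i}\mu(d)n^{i/d}$, so Witt's formula does not ``convert this into the stated closed form''; your final sentence asserts an equality that is off by a factor of $i$. The discrepancy is not in your computation: for $n=3$, $i=2$ the constant term of $p_2=(x-\alpha_1\alpha_2)(x-\alpha_1\alpha_3)(x-\alpha_2\alpha_3)$ is $-(\alpha_1\alpha_2\alpha_3)^2$, while the stated formula gives exponent $D(2)=1$. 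The paper's own proof (which asserts the exponent is $\frac{1}{n}\dim V_i$) contains the same factor-of-$i$ slip, and your value $\frac{i}{n}\dim V_i$ is the correct one. So you have in effect proved a corrected version of (\ref{dimVi}); as written, though, you should not claim agreement with the stated closed form but instead point out that the exponent must be $\frac{1}{n}\sum_{d\mid i}\mu(d)n^{i/d}$.
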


\begin{proof} Let $\alpha_1, \ldots, \alpha_n$ denote the roots of $p_1.$
Suppose  that the constant term of $ p_1$ is
$(-1)^{n},$ so $\alpha_1 \cdots \alpha_{n} = 1$. The reciprocal
of  $\alpha_j,$ for  $j  =1, \ldots, n,$  is  $
\alpha_{1} \cdots \hat \alpha_{j} \cdots  \cdots\alpha_{n},$  
 the Hall word $ \alpha_{n}
\cdots \hat \alpha_{j} \cdots  \cdots\alpha_{1}$ of length $n-1.$  By
Proposition \ref{p-properties}, Part \eqref{eval-tilde-pi}, this number is a
root of $p_{n-1}.$ Thus $(p_1)_R$ is a factor of $ p_{n-1}.$
Similarly,   the reciprocal of the root $\alpha_{j_1} \alpha_{j_2}$ of
$ {(p_2)_R},$, where $1 \le j_1 < j_2 \le n$, is  $\alpha_1 \cdots
\hat \alpha_{j_1} \cdots \hat \alpha_{j_2} \cdots\alpha_n,$  which is
a permutation of a Hall word on $n-2$ letters, and therefore is  a
root of $ p_{n-2}.$  Thus $(p_2)_R$ is a factor of $ p_{n-2}.$ If the
constant term of $p_1$ is $(-1)^{n+1},$  then $\alpha_1 \cdots
\alpha_n = -1$ and the same argument shows that whenever $\alpha$ is a
root of $ p_1,$ $-\alpha^{-1}$ is a root of $ p_{n-1},$ and when
$\alpha$ is a root of $ p_2,$ then $-\alpha^{-1}$ is a root of $
p_{n-2}.$ 

 Now we prove Part \eqref{root}. Fix   $i \ge 2.$ By
hypothesis, $\alpha_1 \cdots \alpha_n = \pm 1$ and not all of the
roots have modulus one.  Therefore, some root, say  $\alpha_1,$ has
modulus less than one.   Then  the modulus of the  product of the
$n-1$ other roots is $1/|\alpha_1| > 1,$ so   the modulus of  at
least one of the other roots, say $\alpha_2,$ must satisfy $|\alpha_2|
\ge  1/ {\sqrt[n-1]{|\alpha_1|}}$.  The root $\alpha_1^{i-1}
\alpha_2$ of $ p_{i}$ has modulus  equal to  $|\alpha_1|^{i-1 -
\frac{1}{n-1}}.$ The exponent $i-1 - \frac{1}{n-1}$ is positive, so
$\alpha_1^{i-1} \alpha_2$ has modulus less than one. 
By the same reasoning, there exists a root of $p_i$ with modulus
greater than one.

The dimension of $V_i(\boldR)$ is a Dedekind number   $\frac{1}{i}\sum_{d|i}
\mu(d) n^{i/d},$  where $\mu$ is the M\"obius function (Corollary
4.14, \cite{reutenauer}).    
Each of  $\alpha_1, \ldots, \alpha_n$  must occur the same number of times 
in the constant term of $p_i$ by Lemma 1 of \cite{auslander-scheuneman}. 
Therefore, the constant term of  $p_i,$ for
$1 \le i \le r,$ is $\alpha_1 \cdots \alpha_n$ to the power
 $1/n \cdot \dim(V_i(\boldR)),$ as claimed.
\end{proof}

The next lemma helps identify roots of modulus one 
for automorphisms whose first polynomial $p_1$ has Galois group of 
odd order.

\begin{lemma}\label{odd-good} Suppose that the characteristic
polynomial $p_1$ of a semisimple hyperbolic matrix $A$ in $GL_n(\boldZ),$
where $n \ge 3,$ has
Galois group  of odd order.  Let 
$f^A: \frakf_{n,r}(\boldR) \to \frakf_{n,r}(\boldR)$
be  the automorphism of the  free $r$-step nilpotent Lie  algebra on
$n$ generators induced by $A$. If $\lambda$ is an eigenvalue of
$f^A$ with modulus
one, then $\lambda = 1$ or $\lambda = -1.$   
\end{lemma}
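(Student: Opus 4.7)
The plan is to show that the hypothesis on $G$ forces all roots of $p_1$ to be real, and hence all eigenvalues of $f^A$ to be real as well. Given that, the only real numbers of modulus one are $\pm 1$.

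First I would invoke Proposition \ref{p-properties}\eqref{eval-tilde-pi}: every eigenvalue of $f^A$ (acting on any step $V_i$) is a Hall word in the roots $\alpha_1, \ldots, \alpha_n$ of $p_1$, i.e.\ a product of (possibly repeated) roots. In particular, every eigenvalue of $f^A$ lies in the splitting field $K$ of $p_1$.

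Next I would argue that $K \subset \boldR$. Since $K/\boldQ$ is Galois, complex conjugation $c:\boldC\to\boldC$ restricts to an automorphism of $K$, giving an element $c|_K \in G$. This element has order $1$ or $2$. Because $|G|$ is odd by hypothesis, the only possibility is that $c|_K$ is the identity, i.e.\ every element of $K$ is fixed by complex conjugation. Hence $K \subset \boldR$, and in particular every root $\alpha_j$ of $p_1$ is real.

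Finally, any eigenvalue $\lambda$ of $f^A$ is a monomial in the real numbers $\alpha_1, \ldots, \alpha_n$, so $\lambda \in \boldR$. If in addition $|\lambda| = 1$, then $\lambda \in \{1, -1\}$, as claimed. The argument is short and there is no real obstacle; the only substantive ingredient is the observation that an odd-order Galois group cannot contain the order-two complex conjugation, so $K$ must be totally real.
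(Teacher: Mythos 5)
Your proof is correct, but it takes a genuinely different route from the paper's. You deduce from the odd order of $G$ that complex conjugation, which restricts to an element of $G$ of order dividing two, must be trivial, so the splitting field $K$ is totally real; since by Proposition \ref{p-properties}, Part \eqref{eval-tilde-pi}, every eigenvalue of $f^A$ is a Hall word in the roots of $p_1$ and hence lies in $K$, all eigenvalues are real and any of modulus one is $\pm 1$. The paper instead argues factor by factor: each eigenvalue is a root of an irreducible factor $q$ of some $p_i$; the splitting field of $q$ sits inside $K$, so the Galois group of $q$ is a quotient of $G$ and has odd order, while by Remark \ref{roots of modulus one} an irreducible nonlinear polynomial in $\boldZ[x]$ with a root of modulus one is self-reciprocal and has Galois group of even order (a wreath product with $C_2$); hence $q$ must be linear and the eigenvalue is $\pm 1$. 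Both arguments ultimately exploit the absence of order-two phenomena in an odd-order group, but yours is more elementary (no appeal to self-reciprocity or wreath products) and proves slightly more, namely that the entire spectrum of $f^A$ is real --- the same total-reality observation the paper invokes separately in the proof of Theorem \ref{Cn}. The paper's formulation, on the other hand, isolates the mechanism (quotient Galois groups of irreducible factors of the $p_i$) that it reuses in later structural arguments.
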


\begin{proof} Let $p_1$ and $f^A$ be as in the statement of the lemma. Let 
$(p_1, \ldots, p_r)$ be the  $r$-tuple of polynomials associated
to $f^A.$

If a monic irreducible nonlinear  polynomial in $\boldZ[x]$ has a root
of modulus one,  by Remark \ref{roots of modulus one}, its Galois
group $G$ has even order.  

By Proposition \ref{p-properties}, if $\alpha$ is an eigenvalue of 
$f^A,$ it is a root of an irreducible factor $q$ of $p_i$ for 
some $i = 1, \ldots, r.$   
Since the splitting field for $q$ is a subfield of the splitting
field for $p_1,$ the  Galois group $H$ for $q$ is  the quotient of $G$ by a
normal subgroup; hence if $q$ is nonlinear,
 $H$ has odd order.  Then, either $q$ is linear and 
 $\alpha = \pm 1,$ or 
 $q$ is nonlinear and $|\alpha| \ne 1.$
\end{proof}

\subsection{The full rank condition}

Suppose that $p_1$ is an Anosov polynomial in $\boldZ[x]$  with
roots  $\alpha_1, \ldots, \alpha_n.$  
We will want to know 
 when the equation   
\begin{equation}\label{system} \alpha_1^{d_1} \alpha_2^{d_2} \cdots
\alpha_n^{d_n} =  1\end{equation}
has  integer solutions $d_1, \ldots, d_n.$  Note that if 
 $p_1$ has constant term $(-1)^n,$  then $\alpha_1 \cdots \alpha_n  =1,$ 
and  $d_1 = \cdots = d_n = d$ is a solution for any integer $d.$

\begin{definition}\label{full rank-def}
 Let $\Lambda= \{\alpha_1, \ldots, \alpha_n\}$
 be the set of roots of a polynomial $p$ in $\boldZ[x]$ 
with constant term $(-1)^n$
and degree $n \ge 2.$  The set $\Lambda$ is said to be of {\em full rank}
if the only integral 
solutions to Equation \eqref{system} are of form $d_1 = d_2 = \cdots = d_n.$
\end{definition}

 The next proposition describes how multiplicative relationships among the
roots of some polynomials in $\boldZ[x]$ depend on their Galois groups.  
\begin{proposition}\label{full rank}  Suppose that $\alpha_1, \ldots
,\alpha_n$ are roots of a degree $n$ irreducible monic polynomial $p$ in
$\boldZ[x]$ with constant term $(-1)^n,$ and suppose that none of
  $\alpha_1, \ldots
,\alpha_n$ are roots of unity.  Let $G$ denote the Galois group for
$p.$

 The set $\{\alpha_1, \ldots,\alpha_n\}$ is of full rank 
in the following situations. 
\begin{enumerate}
\item{When the permutation representation of 
$G$ on $\boldQ^n$ is the sum of the principal representation and a
representation that is irreducible over $\boldQ.$  
}\label{irr-rep}
\item{When the action of $G$  on the set of roots of $p$
is doubly transitive.}\label{2t->fr}
\item{When $p$ is Anosov, 
and  precisely one  of its roots $\alpha_1$
has modulus greater than one.}\label{bh-gen}
\end{enumerate}
\end{proposition}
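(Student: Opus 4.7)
The plan is to study the $G$-module $M = \{(d_1, \ldots, d_n) \in \boldZ^n : \alpha_1^{d_1} \cdots \alpha_n^{d_n} = 1\}$ of integer multiplicative relations. The hypothesis ``constant term $(-1)^n$'' gives $\alpha_1 \cdots \alpha_n = 1$, so $\mathbf{e} := (1, \ldots, 1) \in M$; full rank amounts to $M = \boldZ \cdot \mathbf{e}$. Applying any $\sigma \in G$ to a relation yields a permuted relation, so $M$ is stable under the permutation action of $G$ on $\boldZ^n$, and $W := M \otimes_{\boldZ} \boldQ$ is a $G$-subrepresentation of $\boldQ^n$ containing the trivial line $\boldQ \cdot \mathbf{e}$.

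For Part \eqref{irr-rep}, the hypothesis gives $\boldQ^n = \boldQ \cdot \mathbf{e} \oplus V$ with $V$ irreducible, so the only $G$-subrepresentations of $\boldQ^n$ containing $\boldQ \cdot \mathbf{e}$ are $\boldQ \cdot \mathbf{e}$ itself and $\boldQ^n$. The latter would force $M$ to have finite index in $\boldZ^n$; in particular $N\mathbf{e}_i \in M$ for some positive integer $N$ and every $i$, giving $\alpha_i^N = 1$ and contradicting the hypothesis that no $\alpha_i$ is a root of unity. Hence $W = \boldQ \cdot \mathbf{e}$ and $M = \boldZ \cdot \mathbf{e}$. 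Part \eqref{2t->fr} then reduces to Part \eqref{irr-rep} by the standard inner-product calculation: double transitivity of $G$ forces $\langle \chi_{\text{perm}}, \chi_{\text{perm}} \rangle = 2$, so $\chi_{\text{perm}} = 1 + \psi$ with $\psi$ absolutely irreducible; the $\boldQ$-subspace $V = \{x \in \boldQ^n : \sum x_i = 0\}$ is a $\boldQ$-form of $\psi$, and is irreducible over $\boldQ$ because its complexification is irreducible.

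For Part \eqref{bh-gen}, I would give a direct positivity argument, avoiding Dirichlet's unit theorem. Given $(d_i) \in M$, pick $k$ realizing $d_k = \min_i d_i$ and set $e := d - d_k \mathbf{e}$; then $e \in M$ with $e_i \ge 0$ for all $i$ and $e_k = 0$. By transitivity of $G$ (from irreducibility of $p$), choose $\sigma \in G$ with $\sigma \cdot \alpha_k = \alpha_1$ and let $f := \sigma \cdot e$. Then $f \in M$, the entries of $f$ are a permutation of those of $e$ (hence all nonnegative), and $f_1 = 0$. Taking $\log$ of absolute value in $\prod_j \alpha_j^{f_j} = 1$ and using $f_1 = 0$ yields
\[
\sum_{j \ge 2} f_j \log|\alpha_j| = 0.
\]
The hypothesis that $\alpha_1$ is the unique root of modulus greater than one, together with the Anosov property that no root has modulus $1$, gives $|\alpha_j| < 1$ and therefore $\log|\alpha_j| < 0$ strictly for every $j \ge 2$. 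Combined with $f_j \ge 0$, every term in the sum is $\le 0$, so each $f_j = 0$. Hence $e = 0$ and $d = d_k \mathbf{e}$, proving $M = \boldZ \cdot \mathbf{e}$.

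The main care-point I anticipate is Part \eqref{2t->fr}: the character calculation only gives irreducibility of $V \otimes \boldC$ directly, and one must note that since $V$ is already defined over $\boldQ$, $\boldQ$-irreducibility descends automatically (any $\boldQ$-splitting would complexify to a $\boldC$-splitting). Part \eqref{bh-gen} is the most interesting: it trades the heavy machinery of logarithmic embeddings and ranks of unit groups for the trick of using Galois transitivity to move the unique zero coordinate of $e$ into position $1$, after which the sign pattern of the remaining $\log|\alpha_j|$ collapses the relation immediately.
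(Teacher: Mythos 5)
Your proof is correct and follows essentially the same route as the paper: the $G$-stable module of multiplicative relations, the trivial-plus-irreducible decomposition argument for Parts (1) and (2) (with the same descent of irreducibility from $\boldC$ to $\boldQ$ in the doubly transitive case), and for Part (3) the same shift-to-nonnegative-exponents-and-permute argument, which is exactly the Bell--Hare proof the paper reproduces, so nothing is being ``traded away'' there --- the paper does not use Dirichlet's unit theorem for this part either. The only cosmetic differences are that you phrase the Part (1) contradiction via finite index of $M$ in $\boldZ^n$ rather than expressing $(1,0,\ldots,0)$ as a rational combination of solutions, and you take logarithms where the paper bounds the product directly.
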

  
An algebraic number $\alpha_1$ as in Part \eqref{bh-gen} of the
proposition is  a \textit{P.-V.\ number}.   
Properties  of P.-V.\ numbers were
first investigated by  Pisot and Vijayaraghavan.  (See \cite{meyer-72}
and \cite{bertin-92} for background on P.-V.\  numbers.) 
The proof of 
 Part \eqref{bh-gen} of the proposition is due to  
Bell and Hare (\cite{bell-hare-05}); we repeat it here for the
sake of completeness.

The action of the  the Galois group
$G$ of $p_1 \in \boldZ[x]$ on the set $\{\alpha_1, \ldots, \alpha_n\}$
of enumerated roots 
of $p_1$ gives an identification of $G$ with a subgroup of $S_n,$ and 
we can  define a  permutation representation $\rho$ of $G$ on $\boldQ^n,$
with 
\begin{equation}\label{rho-def} 
\rho(g) (\beta_1, \ldots, \beta_n) = (\beta_{g(1)}, 
\ldots, \beta_{g(n)})\end{equation}
for $g \in G$ and $(\beta_1, \ldots, \beta_n) \in \boldQ^n.$

\begin{proof}  Fix $\alpha_1, \ldots, \alpha_n$ as in the statement of the 
theorem.
If $(d_1, \ldots, d_n) \in \boldQ^n$ is a solution to 
Equation \eqref{system}, and $\sigma$ is in $G,$ then 
\begin{equation*} \sigma(\alpha_1)^{d_1} \sigma(\alpha_2)^{d_2} 
\cdots \sigma(\alpha_n)^{d_n} =  1,\end{equation*}
which may  be alternately expressed as
\begin{equation}\label{system-2}
\alpha_1^{d_{\sigma^{-1}(1)}} \alpha_2^{d_{\sigma^{-1}(2)}}  
\cdots \alpha_n^{d_{\sigma^{-1}(n)}}  =  1.\end{equation}
Therefore, the set of integral solutions to Equation 
\eqref{system}  is invariant for 
the permutation representation $\rho:$  For all $\sigma$ in $G$,  
  $(d_1, \ldots, d_n) \in \boldQ^n$ is a solution to Equation 
\eqref{system} if and only if $(d_{\sigma^{-1}(1)}, \ldots, d_{\sigma^{-1}(n)})$ is  a solution
to Equation \eqref{system}.  
It is easy to see that the set $S$ of solutions to Equation \eqref{system}
in $\boldZ^n$ is closed 
under addition and subtraction. Therefore, if  
$(d_1, \ldots, d_n)$ is an integral solution to  Equation \eqref{system}, 
then any  vector in  
\[\myspan_{\boldZ} \{ \rho(\sigma)(d_1, \ldots, d_n) \, : \, \sigma \in G \}\]
is also a   solution to  Equation \eqref{system}.

Suppose that $\rho$
decomposes as the sum of the trivial representation on $\boldQ(1,1, \ldots, 1)$
and an irreducible representation on  $W = (1,1, \ldots, 1)^\perp.$  
We will show by contradiction that the set $\{\alpha_1, \ldots, \alpha_n\}$
has full rank.  
Suppose that $(d_1, \ldots, d_n)$ is an $n$-tuple of integers so that
Equation \eqref{system} holds and that $(d_1, \ldots, d_n)$
is not a scalar multiple of  $(1,1, \ldots, 1).$  After
subtracting the appropriate multiple of  $(1,1, \ldots, 1),$ 
we may assume that the solution
  $(d_1, \ldots, d_n)$ is a nontrivial vector in $W.$   The 
representation of $G$ on $W$ is irreducible over $\boldQ,$
 and  $(d_1, \ldots, d_n)$
is a nontrivial element of $W,$ so the invariant subspace
 $\myspan_{\boldQ} 
\{ \rho(\sigma)(d_1, \ldots, d_n) \, : \, \sigma \in G \}$ is 
 all of $W.$
Then $\myspan_{\boldQ} S = \boldQ^n,$ 
implying that 
$(1, 0, \ldots, 0)$ is a $\boldQ$-linear combination of 
solutions $(d_1, \ldots, d_n)$ to Equation \eqref{system}.  
But then there exists an integer $N$ such that $\alpha_1^N = 1,$
a contradiction.  Hence, every solution to  Equation  \eqref{system}
is a scalar multiple of $(1,1, \ldots, 1).$

If 
the action of $G$ is  two-transitive, then the permutation representation of $G$ on $\boldC^n$
is the sum of the trivial representation on $\boldC(1,1, \ldots, 1)$
and a representation on the orthogonal complement that is 
irreducible over $\boldC$  (\cite{serre-77}, Exercise 2.6).  Then 
the  the permutation representation of $G$ on $\boldQ^n$
is the sum of the trivial representation on $\boldQ(1,1, \ldots, 1)$
and a representation on the orthogonal complement that is 
irreducible over $\boldQ,$ and
 the set of roots is of full rank, by Part \eqref{irr-rep}.
Therefore, if $G$ is two-transitive, then
 the set of roots is of full rank.

Now assume that $p$ is irreducible, and without loss of generality
that $\alpha_1 > 0.$  
Suppose that the  roots of $p$ satisfy the condition that
\[ \alpha_1  > 1  > |\alpha_2| \ge \cdots \ge |\alpha_n|, \] 
and  that Equation \eqref{system} holds for $d_1, d_2, \ldots, d_n.$
 Let $m$ be the index so that  
$d_m$ achieves the minimum of the set 
$\{ d_i \, : \, i=1, \ldots, n\}.$ Since 
$(d_m, \ldots, d_m)$  is a solution to Equation \eqref{system},
the $n$-tuple 
\[ (e_1, \ldots, e_n) = (d_1 - d_m, d_2 - d_m, \ldots, d_n -d_m)\] 
is a solution
to Equation \eqref{system} with  $e_m = 0$ and $e_i \ge 0$ for all 
$i = 1, \ldots, m.$

Because $p$ is irreducible, there exists a 
  permutation  $\sigma$ in $G$ such
that  $\sigma(\alpha_1) = \alpha_m,$ or if we identify 
 $G$ with a subgroup of $S_n$ in the natural way, $\sigma(1) = m.$  We
then have
\[  \rho(\sigma)(e_1, \ldots, e_n)  =   
(e_{\sigma(1)}, e_{\sigma(2)},
\ldots, e_{\sigma(n))}) =  (0, e_{\sigma(2)}, \ldots,
e_{\sigma(n))})\] 
is also a solution to the equation, so
\[ \alpha_2^{e_{\sigma(2)}}
\alpha_3^{e_{\sigma(3)}} \cdots 
\alpha_n^{e_{\sigma(n)}} =  1. \]  But 
 $|\alpha_i| < 1$ for $i = 2, \ldots, n,$ and  
 all the exponents are nonnegative, hence all the
exponents $e_i$ must be zero.  Then $d_1 = d_2 = \cdots = d_n$ as desired,
and Part \eqref{bh-gen} holds.
\end{proof}

The next lemma shows that
when the set of roots of an Anosov polynomial $p_1$ is of full rank,
there are strong restrictions on the Galois groups for irreducible 
factors of polynomials $p_2, p_3, \ldots$ associated to $p_1.$
\begin{lemma}\label{stabilizer}
Let $p_1$ be an Anosov polynomial of degree $n$ with 
 constant term $(-1)^n$.  Suppose that  the set of roots 
$\{\alpha_1, \ldots, \alpha_n\}$ of $p_1$ has full rank.
  Let  $G$ denote the Galois group of $p_1,$ and  let $(p_1, \ldots, p_r)$
be the $r$-tuple of polynomials associated to $p_1$ for some $r>1.$

Fix  $p_i$ for some $i = 2, \ldots, r,$ and
suppose that $q$ is an irreducible nonlinear factor  of $p_i$ over $\boldZ$
 with root 
$\beta = \alpha_1^{d_1} \cdots \alpha_s^{d_s}$ for $s \le n-1.$
The Galois group $G$ acts on the  splitting 
field $\boldQ(p_1)$ for $p_1,$ and  the splitting
field $\boldQ(q)$ for $q$ is a subfield of $\boldQ(p_1).$
 Let $H < G$ be the stabilizer of 
$\boldQ(q).$  
 Any element $\sigma$ in $H$  has the properties 
\begin{enumerate}
\item{$\sigma$ permutes the set  $\{ \alpha_{s+1}, \ldots, \alpha_n\}.$}\label{permute-set}
\item{For $j, k = 1, \ldots, s,$
 $\sigma(\alpha_j) = \alpha_k$ only if $d_j = d_k.$  That is, 
$\sigma$ permutes the sets of roots having the same exponent in 
the expression for $\beta$ in terms of $\alpha_1, \ldots, \alpha_n.$}
\end{enumerate}
Thus, $H$  is  isomorphic to a subgroup of the direct product  
\[S_{k_1} \times S_{k_2} \cdots \times S_{k_{m-1}} \times S_{n - s}, \qquad 
(k_1 + \cdots + k_{m-1} = s) \]
of $m \ge 2$ symmetric groups. 
\end{lemma}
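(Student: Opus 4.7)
The plan is to exploit the full rank condition together with the observation that each $\sigma \in H$ fixes $\beta$, since $\beta \in \boldQ(q)$ and $H$ is interpreted as the pointwise stabilizer of $\boldQ(q)$ (i.e.\ $H = \Gal(\boldQ(p_1)/\boldQ(q))$). Identify $G$ with a subgroup of $S_n$ via its action on $\{\alpha_1,\ldots,\alpha_n\}$ and, for $\sigma \in H$, write $\sigma(\alpha_j) = \alpha_{\pi(j)}$ for the corresponding permutation $\pi \in S_n$.

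Applying $\sigma$ to $\beta = \alpha_1^{d_1}\cdots \alpha_s^{d_s}$ and using $\sigma(\beta)=\beta$ yields
\[
\prod_{j=1}^{s} \alpha_{\pi(j)}^{d_j} \cdot \prod_{j=1}^{s} \alpha_j^{-d_j} = 1,
\]
which I would regroup as $\alpha_1^{c_1}\cdots \alpha_n^{c_n} = 1$, where $c_k = \sum_{j:\,\pi(j)=k,\, j \le s} d_j$ for $k > s$ and $c_k = \sum_{j:\,\pi(j)=k,\, j \le s} d_j - d_k$ for $k \le s$. By the full rank hypothesis $c_1 = \cdots = c_n$; but summing gives $\sum_k c_k = \sum_{j=1}^s d_j - \sum_{k=1}^s d_k = 0$, so every $c_k$ equals $0$.

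The two parts of the lemma now read off. The exponents $d_j$ for $1 \le j \le s$ are strictly positive, since only these indices appear in the expression for $\beta$ and $s \le n-1$. For $k > s$, the identity $c_k = 0$ forces $\pi(j) \ne k$ for every $j \le s$; hence $\pi$ preserves $\{1,\ldots,s\}$ and $\{s+1,\ldots,n\}$ setwise, which is Part~(1). For $k \le s$, since $\pi$ now permutes $\{1,\ldots,s\}$ there is a unique $j \le s$ with $\pi(j) = k$, namely $j = \pi^{-1}(k)$, so $c_k = 0$ becomes $d_k = d_{\pi^{-1}(k)}$; equivalently, $\pi$ sends each level set $\{j \le s : d_j = e\}$ of the exponent function to itself, which is Part~(2).

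For the final direct-product statement, I would partition $\{1,\ldots,s\}$ into the $m-1$ level sets of $j \mapsto d_j$, of cardinalities $k_1,\ldots,k_{m-1}$ summing to $s$, and adjoin the block $\{s+1,\ldots,n\}$ of size $n-s$; then $\sigma \mapsto \pi$ embeds $H$ into $S_{k_1}\times \cdots \times S_{k_{m-1}} \times S_{n-s}$. The bound $m \ge 2$ is immediate since $s \ge 1$ and $n - s \ge 1$. The only step with any content is the summation trick that pins the common value of $c_k$ at zero; after that the result is pure bookkeeping, and the main ``obstacle'' is really just setting up the exponent accounting carefully enough that the full rank hypothesis can be applied in one clean step.
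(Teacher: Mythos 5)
Your proof is correct and follows essentially the same route as the paper: apply an element of the (pointwise) stabilizer of $\boldQ(q)$ to the multiplicative expression for $\beta$, invoke the full rank hypothesis on the resulting relation $\alpha_1^{c_1}\cdots\alpha_n^{c_n}=1$, and read off the block structure of the permutation. Your summation argument pinning the common value of the $c_k$ at zero is a slightly more careful treatment of a step the paper asserts directly, but it is the same argument in substance.
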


\begin{proof}   Let $\beta = \alpha_1^{d_1} \cdots \alpha_s^{d_s}$ be as
in the statement of the theorem, and let $\sigma$ be in the stabilizer $H$ of
$\boldQ(q).$  Then $\sigma(\beta) = \beta$ implies that
\[  \alpha_1^{d_{\sigma^{-1}(1)}} \alpha_2^{d_{\sigma^{-1}(2)}} \cdots 
\alpha_s^{d_{\sigma^{-1}(s)}} \alpha_n^0 =  \alpha_1^{d_1} \cdots \alpha_s^{d_s},\]
so then 
\[  \alpha_1^{d_{\sigma^{-1}(1)} - d_1} \alpha_2^{d_{\sigma^{-1}(2)} - d_2} \cdots 
\alpha_s^{d_{\sigma^{-1}(s)} - d_s} \alpha_n^0 =   1.\]
By definition of full rank,  
\[ d_{\sigma^{-1}(1)} - d_1 = d_{\sigma^{-1}(2)} - d_2 = \cdots =
d_{\sigma^{-1}(s)} - d_s = 0.\]
Therefore, $d_{\sigma^{-1}(i)} = d_i$ for all $i = 1, \ldots, s;$ in other
words, if $\sigma(i) = j,$ then $d_i = d_j.$  Hence
$\sigma$ permutes each set of $\alpha_i$'s for which the exponents $d_i$
 agree, including the nonempty
 set $\{ \alpha_{s+1}, \ldots, \alpha_n\}$ where $d_i = 0.$  
\end{proof}

The following lemma describes how the polynomials in the 
$r$-tuple of polynomials  $(p_1, p_2, \ldots, p_r)$
can factor, in terms of the properties of $p_1.$
\begin{lemma}\label{how it can factor}   Suppose  $p_1$
is a monic polynomial in $\boldZ[x]$ of degree $n \ge 3$ with constant
term $(-1)^n.$ 
Let $G$ denote the Galois group for $p_1.$ Let $(p_1, p_2, \ldots, p_r)$
be an $r$-tuple of polynomials associated to $p_1.$  
 Let $q_1$ and $q_2$ be as defined
in  Equation \eqref{q1,q2}.
\begin{enumerate}
\item{
  Assume that $p_1$ is separable. 
If  $p_2$ or $q_1$ factors over $\boldZ$ as a power of an irreducible
polynomial $r,$ then the degree of $r$ is $n-1$ or more. 
If  $q_2$ factors over $\boldZ$ as a power of an
irreducible polynomial $r,$ then the degree of $r$ is $n-2$ or more.}\label{degree-r}
\item{Suppose that 
 the set of roots of $p_1$ is of full rank.  
\begin{enumerate}
\item{For $i =2, \ldots, r,$ 
the degree $k$ of  any factor of $p_i,$  satisfies 
$k = d (n/i)  $ for some positive integer $d \le D(i),$
where 
 $D(i)$ is as defined in Proposition \ref{anosov poly}. 
Therefore, if $\gcd(n,i)=r,$ then $n/r$ divides $k.$ }\label{factor-degree}
\item{
 If $q$ is a nonlinear irreducible factor  of $p_i$ over $\boldZ$
for some $i = 2, \ldots, r,$
 then the normal subgroup $N$ of $G$ of automorphisms 
 fixing $\boldQ(q)$  does not  act
transitively on the roots of $p_1.$   }\label{N-not-trans}
\item{If $p_2 = r^s$ or  $q_1 = r^s$ for an irreducible monic 
polynomial $r \in \boldZ[x],$ 
then $s = 1$   when $n \ge 3;$ and  when $n \ge 4,$
if  $q_2 = r^s$  for an irreducible $r,$ then $s=1.$}\label{fr-p2-irr}
\end{enumerate}}
\end{enumerate}
\end{lemma}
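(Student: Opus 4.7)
The plan is to prove the four claims in order, starting with the separability hypothesis in Part (1) and then leveraging full rank for Part (2). Throughout I denote the roots of $p_1$ by $\alpha_1, \ldots, \alpha_n$ and the Galois group by $G$. For Part (1), I fix a single index and use separability of $p_1$ to produce many distinct roots inside the putative irreducible factor. If $p_2 = r^m$ with $r$ irreducible, the $n-1$ products $\alpha_i \alpha_j$ with $j$ fixed and $i \ne j$ are all distinct roots of $r$, so $\deg r \ge n-1$. Fixing $j$ in $\alpha_i \alpha_j^2$ handles $q_1$ by the same device, and fixing a pair $i < j$ while varying $k$ in $\alpha_i \alpha_j \alpha_k$ gives $n-2$ distinct roots of $r$ inside $q_2$.

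For Part (2)(a), I execute a Galois orbit average. Writing any root of an irreducible factor $q$ of $p_i$ as $\beta = \prod_{j=1}^{n} \alpha_j^{e_j}$ with $e_j \ge 0$ and $\sum e_j = i$, the argument of Lemma~\ref{stabilizer} identifies the stabilizer $H$ of $\beta$ in $G$ with the permutation-stabilizer of the exponent vector, so $k = \deg q = [G : H]$ equals the Galois orbit size of $\beta$. The orbit sum $S = \sum_{\sigma H \in G/H} (e_{\sigma^{-1}(1)}, \ldots, e_{\sigma^{-1}(n)})$ is $G$-invariant, and irreducibility of $p_1$ (a consequence of full rank) makes the $G$-action on $\{1, \ldots, n\}$ transitive, forcing $S = s \cdot (1, \ldots, 1)$ for a positive integer $s$. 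Equating coordinate sums yields $ns = ki$, so $k = s(n/i)$ and $d := s$ is a positive integer; summing $s$ over all irreducible factors of $p_i$ recovers the exponent $D(i)$ of $\alpha_1 \cdots \alpha_n$ in the constant term of $p_i$ (Proposition~\ref{anosov poly}), giving $d \le D(i)$. The divisibility $n/\gcd(n,i) \mid k$ then follows from $ki = dn$.

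For Part (2)(b), suppose for contradiction that $N$ acts transitively on $\{\alpha_1, \ldots, \alpha_n\}$. Every $\sigma \in N$ fixes each root $\beta$ of $q$, so by Lemma~\ref{stabilizer} the action of $\sigma$ preserves the partition of $\{1, \ldots, n\}$ into equal-exponent classes for $\beta = \prod \alpha_j^{e_j}$. Transitivity of $N$ collapses this partition to one class, so all $e_j$ equal a common value $e$; then $\beta = (\alpha_1 \cdots \alpha_n)^e = 1$ (using $\prod \alpha_j = 1$, which holds since $p_1$ has constant term $(-1)^n$), forcing $q = x - 1$ and contradicting the nonlinearity of $q$. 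For Part (2)(c), the key observation is that full rank forces $p_2$, $q_1$, and $q_2$ to be separable, whereupon the multiplicity $s$ in any factorization $p_2 = r^s$, $q_1 = r^s$, or $q_2 = r^s$ must equal $1$. Separability reduces to checking that any coincidence among the relevant products yields a multiplicative relation $\prod \alpha_j^{c_j} = 1$ whose exponent vector is not a constant multiple of $(1, \ldots, 1)$, contradicting full rank; partial-overlap subcases are dispatched by separability of $p_1$. The range $n \ge 3$ suffices for $p_2$ and $q_1$, while $n \ge 4$ is needed for $q_2$ (for $n = 3$ the polynomial $q_2 = x - 1$ is trivially linear).

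The main obstacle is the case analysis in Part (2)(c): the partial-overlap subcases must be enumerated carefully for each of $p_2$, $q_1$, and $q_2$, repeatedly invoking full rank together with separability of $p_1$. The cleanest step is the Galois averaging in Part (2)(a), which converts the divisibility assertion into a coordinate-sum identity.
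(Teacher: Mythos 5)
Your proposal is correct, and Parts (1), (2)(b) and (2)(c) run essentially as in the paper: counting distinct roots (using separability and the absence of zero roots) to bound $\deg r$ from below, using the stabilizer computation of Lemma \ref{stabilizer} plus the identity $\alpha_1\cdots\alpha_n=1$ to rule out transitivity of $N$, and deducing separability of $p_2$, $q_1$, $q_2$ from full rank (any coincidence gives a nonzero exponent vector with zero coordinate sum, hence non-constant) so that $r^s$ forces $s=1$. Your choice in Part (1) of fixing the \emph{squared} index in $\alpha_i\alpha_j^2$ is in fact the safer variant of the paper's count, since varying $i$ only uses distinctness of the $\alpha_i$.

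The one place you take a genuinely different route is Part (2)(a). The paper argues in one stroke for an \emph{arbitrary} monic factor $q$ of $p_i$ over $\boldZ$: its constant term is $\pm 1$ and is (up to sign) a product of $k$ degree-$i$ monomials in the roots, so full rank forces all exponents equal to some $d\ge 1$, whence $ki=nd$ and $d\le D(i)$ by comparison with the constant term of $p_i$. You instead average the exponent vector of a root over its Galois orbit and use transitivity of $G$ on $\{1,\dots,n\}$ to force the orbit sum to be $s(1,\dots,1)$, giving $ki=ns$. This is correct, but it carries two small obligations the paper's argument avoids: you must justify the parenthetical ``irreducibility of $p_1$ is a consequence of full rank'' (true, and quick: a proper monic integer factor of $p_1$ would have constant term $\pm 1$, yielding a relation $\prod_{j\in S}\alpha_j^2=1$ with non-constant exponent vector), and your argument as written treats only irreducible factors, whereas the statement concerns any factor of $p_i$; the general case then follows by summing $k_j=s_j(n/i)$ over the irreducible constituents, with $\sum s_j\le D(i)$ exactly as in your closing remark. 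With those two sentences added, your Part (2)(a) is a complete alternative; the paper's version is shorter and needs neither irreducibility of $p_1$ nor the orbit bookkeeping, while yours makes the group-theoretic mechanism (stabilizer $=$ level-set stabilizer, degree $=$ orbit size) explicit, which is the same structure exploited later in Theorems \ref{Sn} and \ref{Cn}.
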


\begin{proof} 
Let  $\alpha_1,
\ldots, \alpha_n$  denote the roots of $p_1.$
 If a polynomial $r$ is
irreducible with $m$ distinct roots, then $r^s$ has $m$ distinct roots
each of multiplicity $s.$  Thus, to prove the 
 the first part, we simply count the number of roots of each polynomial
$p_2, q_1,$ and $q_2$ that are guaranteed to be distinct, and the
degree of $r$ is necessarily greater or equal to that number if 
$p_2, q_1,$ or $q_2$ is of form $r^s.$ For 
$p_2,$ roots of form $\alpha_1 \alpha_j, j = 2, \ldots, n$ are distinct; for 
$q_1,$ roots of form $\alpha_1 \alpha_j^2, j = 2, \ldots, n$ are distinct;
and  for 
$p_2,$ roots of form $\alpha_1 \alpha_2 \alpha_j, 
j = 3, \ldots, n$ are distinct.  This proves Part \eqref{degree-r}.

Now suppose that the set of roots of $p_1$ has full rank.  
Let $q$ be a degree $k$ 
factor of $p_i$ over $\boldZ[x]$ for some $i = 2, \ldots, n.$
The constant term  of $q$ is $\pm 1,$ and by the full rank property, of 
form $(\alpha_1 \alpha_2 \cdots \alpha_n)^d$
for some positive integer $d.$  The constant term of $p_i$ is 
 $(\alpha_1 \cdots \alpha_n)^{D(i)},$ by Proposition \ref{anosov poly},
so $d \le D(i).$
On the other hand, roots of $p_i$ are Hall words of degree $i$ in 
$\alpha_1, \alpha_2, \ldots, \alpha_n,$ so the constant term of $q$ is 
the product of $k$ $i$-letter words in  
 $\alpha_1, \alpha_2, \ldots, \alpha_n.$  Thus, $ki = nd,$
and the degree $k$ is an integral multiple of $n/i$ as desired.
This proves Part \eqref{factor-degree}.

Now suppose that $q$ is nonlinear and irreducible.   
Let $\beta$ be a root of $q.$  Using the identity 
 $\alpha_1 \cdots \alpha_n = 1,$ we may write $\beta$ in the form
 $\alpha_1^{d_1} \cdots \alpha_{n-1}^{d_n}$ where no $\alpha_n$ appears,
and    Lemma \ref{stabilizer} applies.  
Then by the lemma, the action of $N$ on $\{\alpha_1 \cdots \alpha_n\}$
is not transitive.

Finally, to show irreducibility of $p_2, q_1$ and $q_2,$ use
the full rank condition to show that the roots of $p_2$ and
 $q_1$ are distinct when $n \ge 3,$ and the roots of $q_2$ are 
distinct when $n \ge 4.$
\end{proof}

We obtain a corollary that describes the dimensions of the steps
of certain sorts of Anosov Lie algebras. 
\begin{corollary}\label{prime-dim}
Let $\frakn$ be an $r$-step nilpotent Lie algebra of type $(n_1, \ldots, n_r),$
where  $n_1$ is prime and 
$1 < r < n_1,$ that admits an Anosov automorphism $f.$  
Let $(p_1, p_2, \ldots, p_n)$ be the $r$-tuple of polynomials associated
to an automorphism $f$ of $\frakf_{n,r}(\boldR)$ that has $\overline{f}$ as a 
quotient. 
Suppose that  the polynomial $p_1$ is irreducible.  
Then $n_1$ divides $n_i$ for all $i =2,\ldots, r.$
\end{corollary}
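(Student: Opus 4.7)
The plan is to apply Lemma~\ref{how it can factor}(2)(a), which requires both that the set of roots of $p_1$ be of full rank and that $\gcd(n_1,i)=1$ for the relevant $i$. The second condition is immediate since $n_1$ is prime and $i\le r<n_1$. For the first, because $p_1$ is irreducible the Galois group $G$ acts transitively on its $n:=n_1$ roots, and Burnside's theorem on transitive permutation groups of prime degree says that $G$ is either doubly transitive, in which case full rank follows from Proposition~\ref{full rank}(2), or $G$ embeds in the affine group $AGL_1(n)=C_n\rtimes C_{n-1}$. In the latter case I would verify, using Frobenius reciprocity together with the action of $\Gal(\boldQ(\zeta_n)/\boldQ)=(\boldZ/n)^*$ on the nontrivial characters of the normal $C_n$, that the permutation representation of $G$ on $\boldQ^n$ splits as the trivial representation plus a single $\boldQ$-irreducible representation of dimension $n-1$, whence Proposition~\ref{full rank}(1) gives full rank.

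The next step is to show that the defining ideal $\fraki<\frakf_{n,r}$, with $\frakn\cong\frakf_{n,r}/\fraki$, is graded: $\fraki=\bigoplus_{j=2}^{r}(\fraki\cap V_j)$. Full rank forces the eigenvalues of $f^A$ on distinct steps $V_j,V_k$ to be disjoint for $j\ne k$, because a coincidence $\alpha_1^{e_1}\cdots\alpha_n^{e_n}=\alpha_1^{e'_1}\cdots\alpha_n^{e'_n}$ with $\sum e_\ell=j$ and $\sum e'_\ell=k$ would, by full rank applied to the quotient, force all differences $e_\ell-e'_\ell$ to be a common constant $c$, giving $nc=j-k$ and hence $j=k$ (as $|j-k|<n$). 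Since $f^A$ is semisimple and $\fraki$ is $f^A$-invariant, $\fraki$ decomposes as a direct sum of eigenspaces (over the splitting field, descending rationally by Proposition~\ref{p-properties}), each of which lies entirely in some single $V_j$ by disjointness; hence $\fraki$ is graded and $n_j=\dim V_j-\dim(\fraki\cap V_j)$.

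To finish, fix $i\in\{2,\ldots,r\}$. By Lemma~\ref{how it can factor}(2)(a), every irreducible factor of $p_i$ over $\boldZ$ has degree divisible by $n/\gcd(n,i)=n$. By Proposition~\ref{p-properties}(2), $\dim V_i$ is the sum of the degrees of all irreducible factors of $p_i$, and $\dim(\fraki\cap V_i)$ is the sum of the degrees of those factors whose corresponding minimal rational invariant subspace is contained in $\fraki$. Both sums are divisible by $n=n_1$, so $n_1\mid n_i$, as desired. The main obstacle is the representation-theoretic verification of full rank in the non-doubly-transitive Burnside case; once that input is in place, the argument is a direct combination of the structure theory for rational invariant subspaces and the divisibility constraint from Lemma~\ref{how it can factor}.
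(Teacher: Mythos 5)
Your proposal is correct and follows essentially the same route as the paper: establish that the set of roots of $p_1$ has full rank via Proposition \ref{full rank}, then apply Lemma \ref{how it can factor}, Part \eqref{factor-degree}, with $\gcd(n_1,i)=1$ to force every rational invariant subspace inside $V_i$ to have dimension divisible by $n_1$. The only differences are in detail: the paper reaches full rank more directly (since $n_1$ is prime, $G$ contains a subgroup $C_{n_1}$, so the permutation representation on $\boldQ^{n_1}$ is trivial plus $\boldQ$-irreducible and Proposition \ref{full rank}, Part \eqref{irr-rep}, applies without the Burnside dichotomy or character computation), and it leaves implicit the gradedness of $\fraki$ that you verify explicitly.
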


\begin{proof}
Let $G$ denote the Galois group of the polynomial $p_1$ 
associated to the Anosov automorphism $f.$ 
Because $p_1$ is irreducible,  its roots are distinct, 
and $f$ is semisimple.   
The number $n_1$ is prime, hence
it divides the order of $G,$ and by Lagrange's Theorem, there
is a subgroup of $G$ isomorphic to $C_{n_1}.$ 
 The permutation representation 
of $G$ on $\boldQ^n$ is then the sum of the principal representation 
and a representation that is irreducible over $\boldQ.$  Hence the
set of roots of $p_1$ has full rank by Proposition \ref{full rank}.

Now let $q$ be the characteristic polynomial of a rational $f$-invariant 
subspace $E$ contained in $V_i(\boldR)$ for some $i < n.$    Because $n_1$ is
prime, and $i < n_1,$ the numbers $i$  and $n_1$ 
are coprime.  Therefore,  the dimension $k$  of
a rational invariant subspace for $f$ is an integral multiple of $n_1,$
by Part \eqref{factor-degree} of Lemma \ref{how it can factor}.
Therefore, the dimension $n_i$ of the $i$th step of $\frakn$ is a multiple of $n_1$
for all $i = 2, \ldots, r.$
\end{proof}

\subsection{The existence of Anosov polynomials with given Galois group}
 
In the next proposition, we summarize some results on the existence
of Anosov polynomials with certain properties. 

\begin{proposition}\label{existence} 
There exist irreducible Anosov polynomials in
$\boldZ[x]$ satisfying the following  conditions.
\begin{enumerate}
\item{For all $n \ge 2,$ for all $r = 1, \ldots n-1,$
there exists an irreducible Anosov polynomial
$p$ of degree  $n$   such that precisely $r$ of the roots have modulus larger
than one. 
 }\label{pisot}
\item{For all  $n \ge 2,$ there exists an irreducible Anosov
polynomial of degree $n$ with Galois group  $S_n.$ }\label{G=Sn}
\item{For all prime $n \ge 2,$ there exists an irreducible Anosov
polynomial of degree $n$ with Galois group $C_n.$}
\item{Suppose that the group $G$ acts transitively on some set
of cardinality 2, 3, 4 or 5 (hence is the Galois group of an irreducible 
polynomial in $\boldZ[x]$ of degree  $n \le 5$), and $G$ is not isomorphic
 the alternating group $A_5.$ 
 Then  there exists an Anosov polynomial of 
degree $n$ having Galois group $G.$}\label{low-degree}
\end{enumerate}
\end{proposition}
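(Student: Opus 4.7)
My plan is to handle the four parts separately, since they require quite different techniques. In every case, the common workflow is to first produce a polynomial with the desired Galois-theoretic data (degree, Galois group, number of roots outside the unit disk) and then verify that the Anosov conditions hold: integer coefficients, constant term $\pm 1$, and no roots on the unit circle. The irreducibility and Galois-group conditions will be enforced either by explicit one-parameter families together with Hilbert's irreducibility theorem, or by explicit field-theoretic constructions.

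For Part \eqref{pisot}, I would exhibit an explicit family such as
\[ p_m(x) \;=\; x^{n} - m\bigl(x^{n-1} + x^{n-2} + \cdots + x^{n-r}\bigr) + (-1)^{n} \]
and apply Rouch\'e's theorem on the circle $|x| = 1$. For $|m|$ large enough, the middle term dominates on the unit circle, so one reads off that exactly $r$ roots lie outside the disk, $n-r$ lie inside, and none lie on the boundary. Irreducibility is forced for infinitely many $m$ by Hilbert's irreducibility theorem applied to the family $p_m$; for suitable choices of $m$ one can instead apply a direct Eisenstein-type criterion. For Part \eqref{G=Sn}, the cleanest route is to invoke the density result from Serre already cited in the paper: matrices in $GL_n(\boldZ)$ whose characteristic polynomial is irreducible of degree $n$ with Galois group $S_n$ form a thick set, and the Anosov condition (nonzero trace of logarithm plus avoidance of the thin set of polynomials with a root on the unit circle) cuts out only a thin subset. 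Alternatively, one can specialize the generic polynomial $x^n - x - t \in \boldZ[x,t]$, whose Galois group over $\boldQ(t)$ is $S_n$, and use Hilbert irreducibility plus the Anosov constraints.

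For the prime-degree cyclic case, choose by Dirichlet's theorem a prime $q \equiv 1 \pmod{n}$, so that the cyclotomic field $\boldQ(\zeta_q)$ contains a unique subfield $F$ of degree $n$ over $\boldQ$, with $\Gal(F/\boldQ) \cong C_n$. For $n$ odd prime, $F$ is totally real, and by Dirichlet's unit theorem its unit group has rank $n - 1 \ge 1$; a generic unit $\alpha \in \mathcal{O}_F^{\times}$ generates $F$ over $\boldQ$, and its minimal polynomial is then monic of degree $n$ with constant term $\pm N_{F/\boldQ}(\alpha) = \pm 1$. Because $n$ is odd, Remark \ref{roots of modulus one} rules out any root on the unit circle, since an irreducible integer polynomial with a unit-modulus root must have even degree. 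For $n = 2$, the fundamental unit of any real quadratic field $\boldQ(\sqrt{d})$ with $d > 0$ squarefree gives an explicit Anosov polynomial with cyclic Galois group.

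For Part \eqref{low-degree}, I would enumerate the transitive subgroups of $S_n$ for $n \le 5$, excluding $A_5$: only $C_2$ for $n=2$; $C_3$ and $S_3$ for $n=3$; $C_4$, $V_4$, $D_4$, $A_4$, $S_4$ for $n=4$; and $C_5$, $D_5$, $F_{20}$, $S_5$ for $n=5$. The cases $S_n$ and $C_n$ (for $n=2,3,5$ prime) are already covered by the previous two parts, and the remaining groups are handled by exhibiting explicit Anosov polynomials, drawn for example from standard tables of polynomials realizing small Galois groups over $\boldQ$ and then adjusted, by multiplying $x$ by a large enough integer or translating parameters, to acquire constant term $\pm 1$ and avoid any root on the unit circle. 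The main obstacle I expect is this last part: the bookkeeping for the transitive subgroups is tedious but essentially routine, and the exclusion of $A_5$ reflects the genuine difficulty of producing explicit $A_5$-polynomials of degree five that are simultaneously Anosov, whereas $S_5$ is easy by the thick-set density result and $C_5, D_5, F_{20}$ admit explicit realizations over $\boldQ$ with roots whose moduli can be controlled directly.
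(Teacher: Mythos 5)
Two of your four parts have genuine gaps. In Part \eqref{pisot}, the family $p_m(x)=x^{n}-m\bigl(x^{n-1}+\cdots+x^{n-r}\bigr)+(-1)^{n}$ does not do what you claim when $r\ge 2$: the comparison function $m\,x^{n-r}\bigl(1+x+\cdots+x^{r-1}\bigr)$ vanishes at the nontrivial $r$-th roots of unity, which lie on $|x|=1$, so the Rouch\'e inequality fails there. Indeed, as $m\to\infty$ exactly one root of $p_m$ escapes to infinity, $n-r$ roots converge to the origin, and $r-1$ roots converge to those unit-circle points, so it is not established (and is generically unclear) that precisely $r$ roots lie outside the closed unit disk, nor that none lie on it. The repair is to make a \emph{single} interior coefficient dominant, e.g.\ $x^{n}+a\,x^{n-r}\pm 1$ with $|a|\ge 3$: then $|a\,x^{n-r}|>|x^{n}\pm 1|$ on $|x|=1$, and Rouch\'e (this is the Perron-type criterion the paper cites) gives exactly $n-r$ roots inside, none on, and $r$ outside the circle --- which is exactly the paper's choice; your Hilbert-irreducibility step for the parameter then goes through for this corrected family. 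Relatedly, your fallback for Part \eqref{G=Sn} via specializing $x^{n}-x-t$ cannot work as stated: the Anosov condition forces the constant term $-t$ to be $\pm 1$, so only $t=\pm1$ are admissible and Hilbert's theorem says nothing about those two values. The paper instead quotes the specific fact that $x^{n}-x-1$ is irreducible with group $S_n$ and observes it is not self-reciprocal, so Remark \ref{roots of modulus one} excludes unit-modulus roots; your primary Serre thick/thin route is workable in outline, since for $n\ge 3$ an irreducible polynomial with group $S_n$ cannot be self-reciprocal.

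The more serious gap is Part \eqref{low-degree}. Your proposal to take table polynomials realizing $C_4$, $V_4$, $D_8$, $A_4$, $D_{10}$, $F_{20}$ and ``adjust'' them ``by multiplying $x$ by a large enough integer or translating parameters'' to acquire constant term $\pm 1$ is not a valid operation: $x\mapsto cx$ destroys monicity or integrality after renormalizing, and $x\mapsto x+c$ changes the constant term to $p(c)$, which one can rarely force to equal $\pm 1$. Realizing a prescribed Galois group by a polynomial whose roots are \emph{units} is precisely the nontrivial point here --- the paper explicitly remarks that it is unknown in general whether realizability of $G$ over $\boldQ$ implies realizability with constant term $\pm 1$, which is exactly why $A_5$ is excluded from the statement. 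The paper's proof of this part consists of exhibiting explicit unit-constant-term polynomials for each group (Table \ref{low-degree-list}) and excluding unit-modulus roots via Remark \ref{roots of modulus one}, with a hand check for the self-reciprocal $V_4$ example; your write-up never actually produces such polynomials, so this part is not proved. By contrast, your Part (3) --- the degree-$n$ subfield of $\boldQ(\zeta_q)$ for $q\equiv 1 \pmod n$, a generating unit of infinite order (automatic since $n$ is prime), norm $\pm 1$, and odd degree to exclude unit-modulus roots --- is correct and essentially the paper's argument, with the added merit of treating $n=2$ explicitly rather than deferring it to the table.
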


D.\ Fried showed how to use the geometric version of the 
Dirichlet Unit Theorem and
 the results from \cite{auslander-scheuneman} to construct 
Anosov automorphisms with given spectral properties (\cite{fried-81});
these methods can also be used to prove the first part of the proposition.
We provide an alternate proof that shows the actual polynomials defining
the automorphisms.  

\begin{proof}
The polynomial $p(x) = x^n + a_1x^{n-1} + \cdots + a_{n-1}x \pm 1$
has $r$ roots greater than one in modulus and $n-r$ roots less
than one in modulus when 
\[ |a_r| > 2 + |a_1| + \cdots + |a_{r-1}| + |a_{r+1}| + \cdots + |a_{n-1}|\]
(\cite{mayer}, see \cite{tajima}).
By letting $a_r=3$  and $a_i = 0$ for $i \ne r$ in $\{1, \ldots, n-1\},$
we get a polynomial of degree $n$ with precisely
 $r$ roots greater than one in modulus 
that is irreducible by Eisenstein's Criterion.  By  Remark
\ref{roots of modulus one},  if $n \ne 2r,$
the polynomial can not have any roots of
modulus one.  If $n=2r,$ then the polynomial is 
$(x^r)^2 + 3x^r \pm 1;$ one can check by hand that neither of these polynomials 
have roots of modulus one. 
 This proves
 Part \eqref{pisot}.

The irreducible  polynomial  $x^n - x -1$ has Galois  group $S_n$
(\cite{serre-92}).   As it is not self-reciprocal, by Remark
\ref{roots of modulus one},  it has no roots of modulus one. 

Now suppose that $K$ is a Galois
extension of $\boldQ$ with Galois group $C_n$ with $n \ge 3$ prime.  
It is well known  that such a field exists (See the Kronecker-Weber Theorem, in
\cite{jensen-ledet-yui}).
Let $\eta$ be a Dirichlet fundamental unit for $K,$ and let 
$p$ denote its minimal polynomial.  Since $\eta$ is a unit, the constant
term of $p$ is $\pm 1.$   Because  $\eta \not \in \boldQ,$
the degree $m$ of $\eta$ is greater that one.  
Because $m$ divides $n,$ and $n$ is prime,
 $m$ must equal $n.$ Thus, the minimal polynomial
$p$ for $\eta$ has degree $n$ and  splitting field $K.$   The
degree of $p$ is odd, so by Lemma \ref{odd-good},
$p$ has  no roots of modulus one. Thus we have shown that $p$ is
an Anosov polynomial.  
 
Now we prove Part \eqref{low-degree}.
It is simplest to list examples
of Anosov polynomials of each kind:  See Table \ref{low-degree-list}.
 By Remark
\ref{roots of modulus one}, the only polynomial in the
table that could possibly have roots of modulus one is the  self-reciprocal
  polynomial with Galois group $V_4.$  An easy calculation shows
that it does not have roots of modulus one.  

\renewcommand{\arraystretch}{2}
\begin{table}
\begin{tabular}{|c|c|l|} \hline 
 Degree & Galois group & Anosov polynomial
\\ \hline \hline 
2 & $C_2$ &  $p(x) = x^2 - x - 1$ \\ 
\hline \hline 
3 & $C_3$ &  $p(x) = x^3 - 3x - 1$  \\ 
\hline 
3 & $S_3$ &  $p(x) = x^3 - x - 1$ \\ 
\hline \hline 
4 & $C_4$ & $p(x) = x^4 + x^3 - 4x^2 - 4x + 1$\\ 
\hline 
4 & $V_4$ &  $p(x) = x^4 +  3x^2  + 1$\\ 
\hline 
4 & $D_8$ &  $p(x) = x^4 -x^3 -x^2 + x + 1$\\  
\hline 
4 & $A_4$ &   $p(x)= x^4 + 2x^3 + 3x^2  - 3x + 1$ \\ 
\hline 
4 & $S_4$ &   $p(x)= x^4 - x - 1$ \\
\hline \hline 
5 & $C_5$ & $p(x) = x^5 + x^4 - 4x^3 - 3x^2 + 3x + 1$  \\
\hline 
5 & $D_{10}$ & $p(x) = x^5 - x^3 -2x^2 - 2x -1$ \\ 
\hline 
5 & $F_{20}$ &  $p(x) = x^5 + x^4 + 2x^3 + 4x^2 + x + 1$ \qquad \strut \\
\hline 5 & $A_5$ & Q: What is an example with small coefficients? \qquad \strut  \\ 
\hline 5 & $S_5$ &  $p(x)= x^5 - x - 1$ \\
\hline
\end{tabular}

\bigskip

\caption{\label{low-degree-list} The inverse Galois problem for Anosov polynomials of low
degree}
\end{table}
\end{proof}  
Many of the examples in Table \ref{low-degree-list} were
taken from the appendix of \cite{malle-matzat}; the reader may find there
a great many more examples of Anosov polynomials of degree $n \ge 6$ 
with a  variety of Galois  groups. To our knowledge, it
is known whether the existence of a polynomial in $\boldZ[x]$
with  Galois group $G$ guarantees the existence of
a  polynomial in $\boldZ[x]$ with 
constant term $\pm 1$ and Galois group $G.$  In addition, we do not know of
an example of an Anosov polynomial with Galois group $A_5.$

\section{Actions of the Galois group}\label{action-section}

\subsection{Definitions of actions}  
 In this section, we associate the $\boldQ$-linear action of a finite group
to an automorphism of a free nilpotent Lie algebra that
preserves the rational structure defined by a Hall basis.

\begin{definition}\label{action} 
Let $A$ be a matrix in $GL_n(\boldZ),$ let $p_1$ be the
characteristic polynomial of $A$ and let $K$ be the splitting field for 
$p_1.$  Suppose  that $f$ is the automorphism of
$\frakf_{n,r}(K)$ determined by $A$
and a set  $\calB_1= \{ \bfx_i\}_{i=1}^n$ of generators  for
$\frakf_{n,r}(K).$  Let $\calB = \cup_{i=1}^r \calB_i$ 
be the Hall basis determined by $\calB_1.$
Write $\frakf_{n,r}(K) = \oplus_{i=1}^r V_i(K),$ and
 for $i = 1, \ldots, r,$ let $n_i = \dim_K V_i(K).$

Let $G$ denote the Galois group for the field $K.$ 
 Let
$\bfx_1^i, \ldots, \bfx_{n_i}^i$ denote the $n_i$ elements of the basis
$\calB_i$ of $V_i(K);$ they determine an identification  
 $V_i(K) \cong K^{n_i}.$ 
For $i=1, \ldots, r,$  the  $G$ action on  $K$
extends to a diagonal $G$ action on  $V_i(K) \cong K^{n_i}.$ 
In particular, if $\bfw = \sum_{j=1}^{n_i} \beta_j \bfx_j^i$ is
an  element  in $V_i(K),$ where $\beta_1, \ldots, \beta_{n_i} \in K,$
and $g \in G,$ then   $g \cdot \bfw$ is defined by
\[ g \cdot \bfw =  \sum_{j=1}^{n_i}  (g  \cdot \beta_j) \, 
\bfx_j^i \in  V_i(K). \] 
A $G$ action on the 
free nilpotent Lie algebra  $\frakf_{n,r}(K) = \oplus_{i=1}^r V_i(K)$ 
is defined  by
 extending each of the $G$ actions on $V_i(K),$ for $i = 1, \ldots, r.$
\end{definition}

Next we describe properties of the action.  
\begin{proposition}\label{action-properties} Let 
$A$ be a semisimple matrix in  $GL_n(\boldZ),$
let  $p_1$ be the characteristic polynomial of $A,$ and let
$K$ be the splitting field for $p_1$ over $\boldQ.$
Let $\calB_1$ be a generating set for the free nilpotent
Lie algebra $\frakf_{n,r}$ and let $\calB$ be the Hall 
basis that it determines.  
Let  $f$ be the automorphism of  $\frakf_{n,r}(K),$ induced by Hall basis
$\calB$ and the matrix $A.$  Let $G$ be the
Galois group for $K,$ and let $G \cdot
\frakf_{n,r}(K) \to \frakf_{n,r}(K)$ be the action defined in
Definition \ref{action}. Then
\begin{enumerate}
\item{The $G$ action is $\boldQ$-linear, preserving
$\frakf_{n,r}(\boldQ) < \frakf_{n,r}(K),$ and it preserves
 the decomposition $\frakf_{n,r}(K)  =
\oplus_{i=1}^r V_i(K)$ of $\frakf_{n,r}(K)$ into steps.}\label{preserve-Vi}
\item{The $G$ action on $\frakf_{n,r}(K)$ commutes with the Lie
bracket.  }\label{action-bracket}
\item{The function  $f: \frakf_{n,r}(K) \to \frakf_{n,r}(K)$ 
is $G$-equivariant.}\label{action-fK}
\item{The $G$ action permutes the eigenspaces of $f:$ 
an element $g \in G$ 
sends the $\alpha$ eigenspace for $f$ to the $g
\cdot \alpha$ eigenspace for $f.$}\label{action-eigenspace}
\end{enumerate}
\end{proposition}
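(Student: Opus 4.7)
The plan is to address the four parts in the order stated; the only part requiring a nontrivial observation is Part \eqref{action-bracket}, and it rests on the fact, noted in Section \ref{free-section}, that the structure constants of $\frakf_{n,r}(K)$ relative to the Hall basis $\calB$ are rational. Since the action was defined coordinate-wise on each $V_i(K)$ with respect to $\calB_i$, preservation of the step decomposition in Part \eqref{preserve-Vi} is immediate from the definition. For $\boldQ$-linearity and preservation of $\frakf_{n,r}(\boldQ)$, I would use that $G$ fixes the prime subfield $\boldQ$ pointwise: if $\bfv$ has all of its $\calB$-coordinates in $\boldQ$, then $g\cdot\bfv=\bfv$, and for $\lambda\in\boldQ$ one has $g\cdot(\lambda\bfv)=g(\lambda)\,(g\cdot\bfv)=\lambda(g\cdot\bfv)$ by the $K$-semilinearity built into the definition.

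For Part \eqref{action-bracket} I would first check the bracket identity on pairs of basis elements: for $\bfu=\bfx_j^i\in\calB_i$ and $\bfv=\bfx_k^{i'}\in\calB_{i'}$, the element $[\bfu,\bfv]\in V_{i+i'}(K)$ expands as a $\boldQ$-linear combination of elements of $\calB_{i+i'}$, so its $\calB$-coordinates lie in $\boldQ$ and are fixed by every $g\in G$; meanwhile $g\cdot\bfu=\bfu$ and $g\cdot\bfv=\bfv$ for the same reason, so the identity $g\cdot[\bfu,\bfv]=[g\cdot\bfu,g\cdot\bfv]$ is trivial on basis pairs. Then for arbitrary $\bfv=\sum_j a_j\bfx_j^i$ and $\bfw=\sum_k b_k\bfx_k^{i'}$ in $V_i(K)\times V_{i'}(K)$, I would expand bilinearly and apply $K$-semilinearity of the $G$-action: both $g\cdot[\bfv,\bfw]$ and $[g\cdot\bfv,g\cdot\bfw]$ reduce to $\sum_{j,k} g(a_j)\,g(b_k)\,[\bfx_j^i,\bfx_k^{i'}]$, using once more that the structure constants are fixed by $G$. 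General bracket compatibility then extends by additivity across steps.

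Part \eqref{action-fK} runs on exactly the same principle, with the matrix $A_i$ of $f|_{V_i(K)}$ with respect to $\calB_i$ playing the role of the structure constants. As recorded in Section \ref{setup}, the entries of $A_i$ are integers independent of $K$; hence they are fixed by $G$, and so on coordinates $f$ simply acts as a $\boldZ$-matrix, commuting with the coordinate-wise $G$-action. Finally, Part \eqref{action-eigenspace} is a mechanical consequence of Part \eqref{action-fK} together with $K$-semilinearity: if $f(\bfv)=\alpha\bfv$ with $\alpha\in K$, then $f(g\cdot\bfv)=g\cdot f(\bfv)=g\cdot(\alpha\bfv)=(g\cdot\alpha)\,(g\cdot\bfv)$, showing that $g$ carries the $\alpha$-eigenspace into the $(g\cdot\alpha)$-eigenspace (and, by running the same argument with $g^{-1}$, onto it).

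No step should present a serious obstacle; the only delicate point is being careful in Part \eqref{action-bracket} to distinguish $\boldQ$-linearity from $K$-linearity, since the action is $K$-semilinear rather than $K$-linear. The entire proposition reduces to the single structural observation that $\calB$ was chosen so that both the Lie bracket and the automorphism $f$ are defined over $\boldQ$ (indeed $\boldZ$), which is precisely why the diagonal Galois action on $K$-coordinates is automatically a Lie algebra automorphism commuting with $f$.
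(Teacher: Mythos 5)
Your proposal is correct and follows essentially the same route as the paper: Part \eqref{action-bracket} via rationality of the Hall-basis structure constants, Part \eqref{action-fK} via the integrality of the matrix representing $f$ relative to $\calB$, and Part \eqref{action-eigenspace} by the semilinear computation $f(g\cdot\bfv)=g\cdot f(\bfv)=(g\cdot\alpha)(g\cdot\bfv)$. The only cosmetic difference is that you first verify the bracket identity on basis pairs and then extend semilinearly, whereas the paper computes directly on arbitrary linear combinations; the substance is identical.
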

  
Note also that because it fixes $\frakf_{n,r}(\boldR)$ stepwise, 
the $G$ action on $\frakf_{n,r}(K) = \oplus_{i=1}^r V_i(K)$
commutes with the  grading automorphism $B$ 
defined by $B (\bfw) = e^i \bfw$ for $\bfw \in V_i.$

\begin{proof} The first assertion follows from the definition of the
action.

The action of $G$ commutes with the Lie bracket because
structure constants for the Hall basis $\calB$
are rational. Let $\bfy_1, \ldots, \bfy_N$  be an enumeration of
the Hall basis $\calB,$ and denote the structure constants relative to 
$\calB$  by
$\alpha_{ij}^k.$  Consider the Lie bracket of two 
arbitrary vectors $\sum_{i=1}^N a_i \bfy_i$
and $\sum_{j=1}^N b_j \bfy_j$ in $\frakf_{n,r}(K):$
\begin{align*}  g \cdot \left[\sum_{i=1}^N a_i \bfy_i, \sum_{j=1}^N
b_j \bfy_j\right]  &=  g \cdot \sum_{k=1}^N \left(  \sum_{j=1}^N
\sum_{j=1}^N  \alpha_{ij}^k a_i b_j \right) \bfy_k \\  &= \sum_{k=1}^N
\left(  \sum_{j=1}^N  \sum_{j=1}^N  \alpha_{ij}^k (g \cdot a_i) (g
\cdot b_j) \right) \, \bfy_k \\  & = \left[  g \cdot \sum_{i=1}^N a_i
\bfy_i,  g \cdot \sum_{j=1}^N b_j \bfy_j \right]. 
\end{align*}

Now we show that  the $G$ action on $\frakf_{n,r}(K)$ commutes with
the  automorphism $f.$     We can 
write $\bfw$ in $\frakf_{n,r}(K)$  as a linear combination 
$\sum_{j=1}^{N} \beta_j \bfy_j$ of elements in the Hall basis 
$\calB$ of $\frakf_{n,r}(K).$  Take  $g \in G.$
Because $f$ is linear, 
\[ f (g \cdot \bfw) =  f \left(\sum_{j=1}^{N} (g \cdot \beta_j)
\bfy_j \right) =  \sum_{j=1}^{N} (g \cdot \beta_j) f (\bfy_j) .\]
An integral matrix  $(c_{ij})$ represents $f$ with respect to
the basis $\calB.$  
 Therefore, for each $j=1, \ldots, N,$ the vector
$f(\bfy_j)$ is a $\boldZ$-linear combination $ \sum_{i=1}^{N}
c_{ij} \bfy_i$  of $\bfy_1, \ldots, \bfy_N.$  All of the
integer entries of the matrix $(c_{ij})$  are fixed by the
$G$ action. Hence, $ f (g \cdot \bfw)$ equals
\begin{align*} 
\sum_{j=1}^{N} (g \cdot \beta_j) f (\bfy_j) &= 
\sum_{j=1}^{N} (g \cdot \beta_j)  \sum_{i=1}^{N} c_{ij}\, \bfy_j\\ 
&= g \cdot    \sum_{j=1}^{N} \sum_{i=1}^{N}  \beta_j
c_{ij} \, \bfy_j\\
&= g \cdot  f \left(\sum_{j=1}^{N} \beta_j
\bfy_j \right) \\  
&= g \cdot (f \bfw). \end{align*} 
Thus we have
shown that $f (g \cdot \bfw) =  g \cdot (f \bfw) $, so   the $G$ action
on $\frakf_{n,r}(K)$ commutes with 
$f$  as asserted.

Consider an  eigenvector $\bfz$ for $f$
with eigenvalue  $\alpha.$  Because 
$\bfz$ is in  $\ker (f - \alpha \Id),$ and $\alpha \in K,$
the vector $\bfz$ is a $K$-linear
combination of elements of the rational basis.  
For $g$ in $G,$ 
\[  f(g \cdot \bfz) = g \cdot f(\bfz) =  g \cdot \alpha \bfz = (g
\cdot \alpha) (g \cdot \bfz).\] Hence an  automorphism $g$ in $G$
sends  the $\alpha$-eigenspace  to the $(g \cdot
\alpha)$-eigenspace.  Therefore, Part \eqref{action-eigenspace} 
of the proposition holds.
\end{proof}

Now we use the action  defined in Definition \ref{action} 
to describe certain important kinds of
subspaces and ideals of Anosov Lie algebras. 

\begin{definition}\label{Ez} 
Let $G$ be a finite group that acts on the free nilpotent
Lie algebra $\frakf_{n,r}(K)$ over field $K,$ and let
 $\bfz \in \frakf_{n,r}(K).$    Define the
subspace $E_G^K(\bfz)$ of $\frakf_{n,r}(K)$ to be the 
$K$-span of the $G$ orbit of $\bfz:$ 
\begin{equation}\label{eg-def}
  E_G^K(\bfz) =  
\myspan_{K} \{ g \cdot \bfz \, : \, g \in G \}.\end{equation}
\end{definition}
 
\begin{example}\label{type-cn}  
Let $\frakf_{n,2}(K) = V_1(K) \oplus V_2(K)$ 
be the free two-step Lie algebra on $n$ generators over field $K.$ 
Let $\calC_1 = \{\bfz_i\}_{i=1}^n$ be a set of $n$ generators for 
$\frakf_{n,2}(K).$  
Let $G$ be the cyclic group  of order $n$ that acts on 
$\frakf_{n,2}(K)$ through the natural action of $G \cong C_n$ on $\calC_1.$
 For each $j = 2, \ldots, \lfloor n/2 \rfloor + 1,$
define the ideal $\fraki_j < V_2(K)$ by 
\begin{align*}
 \fraki_j = E_G^K([\bfz_j, \bfz_1]) 
= \myspan_{K} \{ [\bfz_{s},\bfz_{t}] \, : \, s - t = j - 1 \mod n  \}.
\end{align*} 
For example, when $n =4,$
\begin{align*}
  \fraki_2 &= \myspan_{K} \{ [\bfz_2, \bfz_1], [\bfz_3, \bfz_2],
[\bfz_4, \bfz_3], [\bfz_1, \bfz_4] \},
 \quad \text{and}\\
  \fraki_3 &= \myspan_{K} \{ [\bfz_3, \bfz_1], [\bfz_4, \bfz_2] \}.
\end{align*}
For distinct $j_1$ and $j_2$ in $\{1, \ldots, \lfloor n/2 \rfloor \},$ 
the subspaces 
$\fraki_{j_1}$ and $\fraki_{j_2}$ are independent, hence
 $V_2(K) = \oplus_{j=1}^{ \lfloor n/2 \rfloor}\fraki_j.$
When $n = n_1$ is odd, there are $\frac{1}{2}(n-1)$ such subspaces, 
each of dimension
$n.$  When $n = n_1$ is even, the
subspaces $\fraki_j, j =1, \ldots, n/2 - 1$ are of dimension $n,$
and the subspace $\fraki_{n/2}$ is of dimension $n/2.$

For any proper subset $S$ of $\{1, \ldots, \lfloor n/2 \rfloor\},$ 
 there is an
ideal $\fraki(S)$ of $\frakf_{n,2}(K)$ 
defined by $\oplus_{j \in S} \fraki_j,$ and this ideal defines a 
two-step Lie algebra 
$\frakn_S = \frakf_{n,2}(K)/\fraki_S.$
\end{example}

We define  ideals of free nilpotent Lie algebras arising from
group actions, as the Lie algebra $\frakn_S$ in the previous example
 arises from the action of a cyclic group. 
\begin{definition}\label{ideal-i} 
 Let $G$ be a finite group that acts on the free $r$-step nilpotent Lie algebra
$\frakf_{n,r}(K) = \oplus_{i=1}^r V_i(K)$ over field $K,$ where the field $K$ 
is an extension of $\boldQ.$
Let $L$ be another extension of $\boldQ,$ typically $\boldR$ or $\boldC.$
Suppose that Hall bases $\calB \subset \frakf_{n,r}(K)$ 
and $\calB^\prime \subset \frakf_{n,r}(L)$  define
rational structures on  $\frakf_{n,r}(K)$ and  $\frakf_{n,r}(L)$
respectively, and let $E \to E^L$ be the correspondence of rational invariant 
subspaces defined in Definition \ref{correspond}
 resulting from an identification of $\calB^\prime$ and $\calB.$  

A rational ideal of  $\frakf_{n,r}(L)$ generated by sets of the form
$(E_G(\bfw))^L,$ where $E_G^K(\bfw)$ is 
as defined in Definition \ref{Ez}, for  $\bfw \in \frakf_{n,r}(K),$
 is called {\em an ideal of type $G$ defined over $L$}.
We use $\fraki(G,\bfw)$ denote the ideal of $\frakf_{n,r}(L)$ 
generated by the subspace 
$(E_G^K(\bfw))^L.$ 

A nilpotent Lie algebra of form   $\frakf_{n,r}(L)/\fraki,$ where 
$\fraki$ is of type $G$ will be called a {\em 
nilpotent Lie algebra of type $G.$}
\end{definition}

Now we describe  ideals of symmetric type for two- and three-step
free nilpotent Lie algebras.  
\begin{example}\label{type-sn}
Suppose that $\frakf_{n,r}(K)$ has generating set 
$\calC_1 = \{\bfz_j\}_{j=1}^n$ and that the action of $G \cong S_n$
 on $\frakf_{n,r}(K)$ is defined by permuting elements
$\bfz_1, \ldots, \bfz_n$ of the generating set $\calC_1.$  
Let $\calC = \cup_{i=1}^r \calC_i$ be the Hall basis determined by 
$\calC_1.$  

For any $\bfw = [\bfz_i,\bfz_j]$ in $\calC_2,$
the subspace $E_G^K(\bfw)$ is all of $V_2(K):$
\[ E_G^K(\bfw) = \myspan_{K} \{ [\bfz_j,\bfz_i]\}_{1 \le i < j \le n} = V_2(K).\]
Hence, an ideal $\fraki < \frakf_{n,r}(K)$
 of type $S_n$ that intersects $V_2(K)$ nontrivially must contain
all of $V_2(K).$

When $r \ge 3,$ there are two sets of form  $E_G^K(\bfw)$
with $\bfw \in \calC_3,$ 
the subspaces $F_1$ and $F_2$ defined
in Equation \eqref{F defs}:
\[ F_1(K) = E_G^K([[\bfz_2,\bfz_1], \bfz_1]) \quad
\text{and} \quad F_2(K) = E_G^K([[\bfz_2,\bfz_1], \bfz_3]). \]
Therefore, for any ideal $\fraki$ of type $S_n,$ 
 the subspace $\fraki \cap V_3(K)$ is one of the following: 
$\{0\},$ $F_1(K),$ $F_2(K),$ or  $V_3(K).$
\end{example}
Next is an example showing nilpotent Lie algebras 
arising from  dihedral groups.  
\begin{example}\label{D2n} Let $A$ be a semisimple 
matrix in $GL_n(\boldZ)$ whose
characteristic polynomial has   splitting field $K$
and Galois group  $G$ isomorphic to the
dihedral group  $D_{2n}$ of order $2n.$ Let
$f$ be the automorphism of  $\frakf_{n,2}(K)$ induced by $A$ and a Hall
basis $\calB.$ Let  $\bfz_1,  \ldots, \bfz_n$ denote a set of
 eigenvectors of $f|_{V_1(K)}$ spanning 
$V_1(K)$ compatible with the rational structure, 
and let  $\alpha_1,  \ldots, \alpha_n$ denote
corresponding eigenvalues.

The group  $D_{2n}$ is isomorphic to  the group of symmetries  of a
regular $n$-gon.   Enumerate the vertices of such an $n$-gon in
counterclockwise order so that $D_{2n} \cong \la r, s\ra,$ where $r$ is
counterclockwise rotation by $2\pi/n$ and $s$ is reflection 
through a line through center of the $n$-gon and the first vertex.   
Let  $X_n$ be  the
complete graph on $n$ vertices obtained by adding edges connecting all
distinct vertex pairs of the $n$-gon.   Identify the roots of
$p_1$ with the $n$ vertices and   the roots of $p_2$ with the
$(\begin{smallmatrix} n \\ 2 \end{smallmatrix})$  edges in such a way
that the eigenvalue  $\alpha_i \alpha_j$ corresponds the to edge
connecting vertices corresponding to eigenvalues $\alpha_i$ and
$\alpha_j.$ 
 The $G$ action on $\frakf_{n,2}(K)$ can then be visualized through the
$D_{2n}$ action on the graph $X_n.$  For example, the $G$ orbit of the
eigenvector  $[\bfz_2,\bfz_1]$ is
\[ G \cdot [\bfz_2,\bfz_1] =  \{ [\bfz_2,\bfz_1], [\bfz_3,\bfz_2],
\ldots, [\bfz_1,\bfz_n] \},\] corresponding to the $n$ ``external''
edges of the graph $X_2.$ The subspace $E_G^K([\bfz_2,\bfz_1])$ defined
by Definition \ref{Ez} is the $K$-span of this set, an
$n$-dimensional subspace of
$V_2(K).$ Other orbits depend on the value of $n:$ if $n=3$ there are
no other orbits, and if $n=4$ or $5,$ there is one more orbit  coming
from ``interior'' edges on the graph, 
 yielding a subspace  $E_G^K([\bfz_3,\bfz_1])$
 of $V_2(K)$ that is complementary to 
$E_G^K([\bfz_2,\bfz_1]).$  When $n \ge 6,$ there are two more  orbits
coming from interior edges.
\end{example}

\section{Rational invariant subspaces}\label{Rational invariant subspaces}

Given an automorphism of a free nilpotent Lie algebra, the 
next  theorem  describes how orbits of  the $G$ action  on
$\frakf_{n,r}(\boldR)$ relate to the factorization of the polynomials  $p_1, \ldots,
p_r.$ These restrictions on factorizations yield restrictions
on the existence of  Anosov quotients.  Roughly speaking, 
when the Galois group of $p_1$ is highly transitive,  the 
rational invariant subspaces for associated Anosov automorphisms
tend to be big also, and when the group is small, the rational
invariant subspaces are small.
 The larger rational invariant subspaces are, the
fewer Anosov quotients there may be.  The field $L$ in the theorem
is typically $\boldR$ or $\boldC.$

\begin{thm}\label{actions} Let $A$ be a semisimple matrix in  $GL_n(\boldZ).$
   Let $(p_1, \ldots, p_r)$ be the
$r$-tuple of polynomials associated to $f,$  let 
$K$ be the splitting field of $p_1$ and let $G$ be the Galois group for $K.$
Let $f$ be the 
 semisimple automorphism of  $\frakf_{n,r}(K)$
defined by Hall basis $\calB = \cup_{i=1}^r \calB_i$ of   $\frakf_{n,r}(K)$
and the matrix $A.$
  Let $\calC = \cup_{i=1}^r
\calC_i$ be the Hall basis for $\frakf_{n,r}(K)$ determined by a set of
$\calC_1 =  \{\bfz_j\}_{j=1}^{n}$ of eigenvectors for $f|_{V_1(K)}$
that is compatible with the rational structure defined by $\calB.$

For all $i = 1, \ldots, r,$  the vector subspace $V_i(K)$ of  $\frakf_{n,r}(K)$ 
 decomposes as
the direct sum of rational invariant subspaces of the form $E_G^K(\bfz),$
where $\bfz \in \calC_i,$  and $E_G^K(\bfz)$ is as defined in
Definition \ref{Ez}.  The characteristic polynomial $p_E$ for
 the restriction of $f$ to $E_G^K(\bfz)$ is of form $p_E = r^s,$
where $r$   is  a polynomial that is
 irreducible over $\boldZ.$  

 Suppose that the field $L$ is an extension of $\boldQ,$
and that $f$ is a
 semisimple automorphism of  $\frakf_{n,r}(L)$
defined by Hall basis $\calB^\prime = \cup_{i=1}^r \calB_i^\prime$
and the matrix $A.$   
Since the subspaces  $E_G^K(\bfz)$ of  $\frakf_{n,r}(K)$ are rational, 
for  all $i = 1, \ldots, r,$  there is a 
decomposition $V_i(L) = \oplus (E_G^K(\bfz))^L$
of $V_i(L) < \frakf_{n,r}(L)$ into rational $f$-invariant
subspaces, through the correspondence defined in Definition 
\ref{correspond} induced by the identification of the 
rational Hall bases $\calB$
and $\calB^\prime$ of $\frakf_{n,r}(K)$ and $\frakf_{n,r}(L)$.
\end{thm}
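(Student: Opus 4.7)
First, for each $\bfz \in \calC_i$ I would establish the three structural properties of $E_G^K(\bfz).$  By Proposition \ref{p-properties}, $\bfz$ is an eigenvector of $f$ with some eigenvalue $\alpha,$ and by Proposition \ref{action-properties} the map $f$ is $G$-equivariant, so each $g \cdot \bfz$ satisfies $f(g \cdot \bfz) = (g \cdot \alpha)(g \cdot \bfz).$  Hence $E_G^K(\bfz)$ is spanned by eigenvectors and is $f$-invariant; it is $G$-stable by construction; and the classical Galois-descent statement that a $K$-subspace of $\frakf_{n,r}(K) \cong K^N$ stable under the diagonal Galois action is the $K$-span of its intersection with $\frakf_{n,r}(\boldQ)$ gives rationality.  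For the characteristic polynomial of $f|_{E_G^K(\bfz)},$ the set of eigenvalues appearing is precisely the Galois orbit $O = G \cdot \alpha,$ and because each $g \in G$ carries the $\lambda$-eigenspace inside $E_G^K(\bfz)$ isomorphically onto the $(g \cdot \lambda)$-eigenspace, every eigenvalue in $O$ occurs with a common multiplicity $s.$  Thus $p_E(x) = r(x)^s,$ where $r(x) = \prod_{\lambda \in O}(x - \lambda)$ is the minimal polynomial of $\alpha$ over $\boldQ,$ irreducible over $\boldZ.$

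Next I would decompose $V_i(K)$ by first splitting on Galois orbit of eigenvalue.  Since $f$ is semisimple and $G$ permutes eigenspaces, $V_i(K) = \bigoplus_O V_{i,O}(K),$ where $V_{i,O}(K)$ is the sum of eigenspaces for eigenvalues in $O;$ each $V_{i,O}(K)$ is $f$-invariant, $G$-stable and rational, and every $\bfz \in \calC_i$ lies in exactly one $V_{i,O}(K)$ together with $E_G^K(\bfz).$  It remains to write each $V_{i,O}(K)$ as a direct sum of $E_G^K(\bfz)$'s with $\bfz \in \calC_i \cap V_{i,O}(K).$  I would proceed iteratively: pick $\bfz^{(1)} \in \calC_i \cap V_{i,O},$ set $W_1 = E_G^K(\bfz^{(1)}),$ and at stage $j+1$ choose $\bfz^{(j+1)} \in \calC_i \cap V_{i,O}$ so that $E_G^K(\bfz^{(j+1)})$ is $K$-linearly independent of $W_1 \oplus \cdots \oplus W_j.$  The existence of a suitable $\bfz^{(j+1)}$ is the main obstacle: I would invoke Maschke's theorem for the finite group $G$ acting $K$-linearly on $V_{i,O}(K)$ to produce a $G$-stable complement of $W_1 \oplus \cdots \oplus W_j,$ use that $\calC_i \cap V_{i,O}(K)$ spans $V_{i,O}(K)$ to find an element of $\calC_i$ whose image in that complement is nonzero, and verify by careful bookkeeping that its entire $G$-orbit lies in the complement.

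The $L$-version is then a transfer argument.  Under the identification of Definition \ref{correspond} induced by the rational Hall bases $\calB$ of $\frakf_{n,r}(K)$ and $\calB^\prime$ of $\frakf_{n,r}(L),$ each rational summand $E_G^K(\bfz)$ corresponds to $(E_G^K(\bfz))^L \subset \frakf_{n,r}(L),$ which is rational and $f^A_L$-invariant, and the direct sum property is preserved because both decompositions have bases inside the shared $\boldQ$-form $\frakf_{n,r}(\boldQ).$  The delicate point throughout is confirming that the iteratively selected $\bfz^{(j)}$ from the Hall basis $\calC_i$ (rather than arbitrary eigenvectors) do produce $G$-stable direct summands, which is where the interplay between the Galois action on eigenspaces, the Hall monomial structure, and the potentially non-trivial eigenvalue multiplicities on $V_i(K)$ for $i \ge 2$ is essential.
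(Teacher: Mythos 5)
Your handling of invariance, rationality, and the shape of the characteristic polynomial is essentially the paper's: the paper also gets $f$-invariance of $E_G^K(\bfz)$ from Proposition \ref{action-properties} (the $G$ action permutes eigenspaces), gets rationality by Galois descent (phrased there via a product of Galois-conjugate polynomial equations rather than your span-of-rational-points formulation), concludes $p_E=r^s$ from the fact that every eigenvalue of $f$ on $E_G^K(\bfz)$ lies in the single Galois orbit of $\alpha$ (your equal-multiplicity argument is a harmless variant), and treats the passage to $\frakf_{n,r}(L)$ exactly as you do, via Definition \ref{correspond}.

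The gap is in the direct-sum decomposition of $V_i(K)$. Your iterative scheme selects $\bfz^{(j+1)}\in\calC_i$ whose image in a Maschke ($G$-stable) complement of $W_1\oplus\cdots\oplus W_j$ is nonzero, and then asserts that ``its entire $G$-orbit lies in the complement.'' That is precisely what fails: a Hall-basis eigenvector with nonzero projection to the complement need not lie in the complement at all, and its cyclic span $E_G^K(\bfz^{(j+1)})$ can meet $W_1\oplus\cdots\oplus W_j$ nontrivially, so the induction does not close. Nor can ``careful bookkeeping'' alone rescue it, because the underlying formal statement is false in general: for $C_2$ acting on $K^3$ with a two-dimensional trivial part and a one-dimensional sign part, the basis $(1,0,1),(0,1,1),(1,1,1)$ (coordinates adapted to that splitting) has the property that every cyclic span of a basis vector is two-dimensional and contains the sign line, so no subfamily of the basis gives a direct decomposition into cyclic spans. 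Hence any correct argument must use the specific structure of the situation, which your sketch never brings in: compatibility of $\calC_1$ with the rational structure forces each $g\cdot\bfz_j$ to be a scalar multiple of some $\bfz_k$, so for $\bfw\in\calC_i$ the vector $g\cdot\bfw$ is a nonzero scalar times a fixed rational (Jacobi-rewriting) combination of the Hall monomials whose multiset of letters is the permuted multiset of $\bfw$; one can therefore split $V_i(K)$ by $G$-orbits of multidegrees and, within each block, observe that cyclic spans of Hall monomials coincide with those of an untwisted rational action, before extracting a direct sum. For comparison, the paper avoids your construction altogether and disposes of this step with a one-sentence appeal to Maschke's theorem after establishing rationality and invariance of each $E_G^K(\bfw)$; your instinct that something extra must be checked when the summands are required to be generated by elements of $\calC_i$ is sound, but the mechanism you propose does not supply it, so as written the decomposition step is a genuine gap.
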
 
 
We illustrate the theorem by considering a special case
of Example \ref{D2n}.
\begin{example}\label{D10} Let $A$ be a semisimple hyperbolic matrix in
$GL_5(\boldZ)$ such that the splitting field $K$ for the
characteristic polynomial $p_1$ for $A$ has Galois group  $G$ isomorphic to the
dihedral group  $D_{10}$ of order $10.$  Let $f_K$ be the automorphism
of  $\frakf_{5,2}(K) = V_1(K) \oplus V_2(K)$ 
induced by $A$ and a Hall basis $\calB.$ Let
$\calC_1 = \{\bfz_1, \ldots, \bfz_5\}$
  be the set of eigenvectors of
$f_K|_{V_1(K)}$ and let $\alpha_1, \ldots, \alpha_5$ denote the corresponding
eigenvalues. Let $\calC = \calC_1 \cup \calC_2$ be the Hall basis
of $\frakf_{5,2}(K)$ determined by 
$\calC_1.$ 

We saw in Example \ref{D2n} that the $D_{10}$ action on $\frakf_{5,2}(K)$ 
 has two
orbits each of $K$-dimension five.  By Theorem \ref{actions}, these
two orbits yield rational $f_K$-invariant subspaces 
\[ \fraki_1(K) =  E_{G}([\bfz_2, \bfz_1]), 
\quad \text{and}
\quad   \fraki_2(K) =  
E_{G}([\bfz_3, \bfz_1])\]
and a decomposition  $V_2(K) = \fraki_1(K) \oplus \fraki_2(K) $
of $V_2(K) < \frakf_{5,2}(K)$ into rational invariant subspaces.   

Let $f_{\boldR}$ be the automorphism of $\frakf_{5,2}(\boldR)$ induced by 
$A$ and Hall basis $\calB^\prime = \cup_{i=1}^r \calB_i^\prime$
of  $\frakf_{5,2}(\boldR).$
Letting $L = \boldR$ in the Theorem \ref{actions}, 
and using the correspondence between  $\frakf_{5,2}(K)$
and  $\frakf_{5,2}(\boldR)$ as  in Definition \ref{correspond},
we get a rational $f_R$-invariant decomposition of 
 $V_2(R) < \frakf_{5,2}(\boldR)$  into rational $f_\boldR$-invariant subspaces
$ \fraki_1(\boldR) = (\fraki_1(K))^\boldR  $
and $\fraki_2(\boldR) = (\fraki_2(K))^\boldR .$

The
polynomial $p_2(x)$ factors as $p_2 = r_1 r_2,$
where the two quintic factors $r_1$ and $r_2$
 are characteristic polynomials of the restriction of $f$ to the 
rational invariant ideals 
$\fraki_1(K)$ and $\fraki_2(K)$
 respectively.    Neither 
$r_1$ nor $r_2$ has
roots of modulus one by Remark \ref{roots of modulus one}.
By Lemma \ref{how it can factor}, Part \ref{factor-degree},
both $r_1$ and $r_2$ are irreducible. 

Thus, the only
 ideals $\fraki$ of $\frakf_{5,2}(\boldR)$
satisfying the Auslander-Scheuneman conditions 
for $f: \frakf_{n,r}(\boldR) \to \frakf_{n,r}(\boldR)$ and 
$\calB$
and defining a two-step Anosov quotient $\frakf_{5,2}/\fraki$
of type $(5,n_2)$
are the trivial ideal, the ideal $\fraki_1(\boldR)$ 
and the ideal $\fraki_2(\boldR).$

Now we'd like to write out the ideals  $\fraki_1^\boldR$ 
and  $\fraki_2^\boldR$ in terms of generators and relations.  
We need to consider two cases.  In the first case, all of the eigenvalues
of $f_\boldR$ are real, so that the 
quotient algebras $\frakn_1 = \frakf_{5,2}/\fraki_1(\boldR)$
and $\frakn_1^\prime= \frakf_{5,2}/\fraki_2(\boldR)$
may be written with generators $\bfz_1,\bfz_2,\bfz_3,\bfz_4,\bfz_5$
in $\frakf_{5,2}(\boldR)$
and relations 
\[ [\bfz_1,\bfz_2]= [\bfz_2,\bfz_3]= [\bfz_3,\bfz_4]= 
[\bfz_4,\bfz_5]= [\bfz_5,\bfz_1]=0  \]
for the first Lie algebra $\frakn_1,$
and  relations 
\[  [\bfz_1,\bfz_3]= [\bfz_2,\bfz_4]= [\bfz_3,\bfz_5]= [\bfz_4,\bfz_1]= [\bfz_5,\bfz_2]=0 \]
for the second Lie algebra.  These Lie algebras are clearly isomorphic. 

In the second case, there is an eigenvector
$\bfz_1$ with a real eigenvalue, and there 
are two complex eigenvalue pairs yielding eigenvector pairs
$\bfz_2, \bfz_3 = \bfx_2 \pm i \bfy_2$ and 
 $\bfz_4, \bfz_5= \bfx_3 \pm i \bfy_3,$ where 
the vectors $\bfx_i, \bfy_i < \frakf_{5,2}(\boldR)$ ($i=1,2$).  
(If there were
only one complex eigenvalue pair, the Galois group would be $S_5.$)
The reader may check that the ideal  $\fraki_1^\boldR$ is generated by
the elements 
\[ [\bfz_1, \bfx_2], [\bfx_2, \bfx_3] - [\bfy_2,\bfy_3], 
[\bfx_2,\bfy_3] + [\bfy_2,\bfx_3],  [\bfx_3, \bfy_3], [\bfz_1, \bfy_2] \]
of $V_2(\boldR) < \frakf_{5,2}(\boldR).$
The ideal  $\fraki_2^\boldR$ yields another ideal isomorphic to
 the first.  These ideals yield isomorphic Lie algebras 
$\frakn_2$ and $\frakn_2^\prime.$

It can be shown that $\frakn_1$ and $\frakn_2$ are not isomorphic, 
because the the  $D_{10}$ symmetry of $\frakf_{5,2}(K)/ \fraki_1$
is preserved when moving to $\frakn_1,$ 
 but it is lost when moving to 
$\frakn_2.$  (In particular, the 
$\bfz_1$ coset in $\frakf_{5,2}(\boldR)/\fraki_2^\boldR$
is the unique element having a three-dimensional
centralizer.)

In summary,  
in addition to  $\frakf_{5,2}(\boldR),$
 there are exactly two isomorphic two-step
nilpotent quotients, both of type $(5,5)$ to which
$f$ descends as an Anosov map, and there are exactly 
two nonisomorphic Anosov Lie algebras of type $(5,5)$ with
Anosov automorphisms yielding Galois group $D_{10}.$   
\end{example}

\begin{proof}[Proof of Theorem \ref{actions}.]
 Fix an element $\bfw$ in the 
basis $ \calC_i$ for $V_i(K) < \frakf_{n,r}(K).$  
By Proposition \ref{p-properties}, 
it is an eigenvector for $f;$ let $\alpha$ 
denote its eigenvalue.  When represented with respect
to the rational basis $\calB_i$ for $V_i(K),$ 
the vector $\bfw$  has coordinates in
$K^{n_i},$ where $n_i = \dim V_i.$ Let $E_G^K(\bfw)$ be the subspace of $V_i(K)$
generated by $\bfw$ and $G$ as in Definition \ref{Ez}.

First we show that $E_G^K(\bfw)$ is invariant under $f.$ By Proposition 
\ref{action-properties}, part \eqref{action-eigenspace}, for all
$g$ in $G,$ the vector $g \cdot \bfw$ is an eigenvector with
eigenvalue $g \cdot \alpha.$  An element
$\bfu$ of $V_i(K)$ is in $E_G^K(\bfw)$ if and only if it is of the form
$\bfu = \sum_{g \in G} c_g \, (g \cdot \bfw),$ where $c_g \in K$
for $g \in G.$ Then 
  \[ f(\bfu) = \sum_{g \in G} c_g \, f(g \cdot \bfw) =  \sum_{g \in G}
c_g \, (g \cdot \alpha) \,  g \cdot \bfw  ,\] so $f(\bfu)$ is also in
$E_G^K(\bfw).$ 

The vector space  $E_G^K(\bfw)$ is spanned by vectors with 
coordinates in $K,$ so there exists
a polynomial function $\phi : V_i(\boldR) \cong \boldR^{n_i} \to \boldR$
with coefficients in $K$  such that $E_G^K(\bfw)$ is the zero set of
$\phi.$  Because  $E_G^K(\bfw)$ is invariant under the $G$ action,
for all $g$ in $G,$
$E_G^K(\bfw)$ is also the  zero set of the function  $\phi_g(x) = \phi (g
\cdot x).$  Let $\bar \phi = \prod_{g \in G}
\phi_g.$ The function $\bar \phi$  has
rational coefficients, so $E_G^K(\bfw)$ is a rational $G$-invariant subspace.
 
By Maschke's Theorem, the subspace
$V_i(K)$ may be written as the direct sum of subspaces of form 
$E_G^K(\bfw).$

Now we show that the characteristic polynomial for the
restriction of $f$ to $E_G^K(\bfw)$  is a power of an irreducible.   Let
$q$ denote the  minimal polynomial for $\alpha.$  By Proposition
\ref{p-properties}, the splitting field  $\boldQ(q)$
is intermediate to  $\boldQ$ and the splitting field $\boldQ(p_1)$ of $p_1.$
 The $G$-orbit of $\alpha$ is precisely the set of roots of $q.$  
 Since $g \cdot \alpha$ is a root of $q$ for all $g \in G,$
 the space $E_G^K(\bfw)$ is contained in the direct sum of the
eigenspaces for eigenvalues $g \cdot \alpha, g \in G.$ Therefore, the
characteristic polynomial $p_E$ for the restriction of $f$ to $E_G^K(\bfw)$ is a
power of the irreducible polynomial $q.$ 
\end{proof}

We will describe a some rational invariant subspaces
that exist for any automorphism of a free nilpotent Lie algebra
preserving a rational structure.
First we need to make a definition.
\begin{definition}\label{j-def} Let $K$ be a field. 
Let $\calC_1 = \{\bfz_j\}_{j=1}^n$ be  a 
generating set for the free $r$-step nilpotent 
Lie algebra $\frakf_{n,r}(K),$ and let $\calC$ be the associated Hall
basis.  
Define the ideal $\frakj_{n,r}$ of $\frakf_{n,r}(K)$ to be the ideal
generated by all elements $\bfw$ of the Hall basis $\calC$
 having the property that there is a single number $k$ such that 
for all $j = 1, \ldots, n,$ the letter $\bfz_j$
 occurs exactly $k$  times in the Hall word $\bfw.$  
\end{definition} 

 For example, when $n = 3,$ the ideal $\frakj_{3,2} < \frakf_{3,2}(K)$
is $\{0\},$ and
the ideal $\frakj_{3,3} < \frakf_{3,3}$ is
given by 
\[ \frakj_{3,3} = \myspan_K \{ [[\bfz_2,\bfz_1],\bfz_3],
 [[\bfz_3,\bfz_1],\bfz_2]\} =   F_2(K) < V_3(K),\]
where $F_2(K)$ is as defined in Equation \eqref{F defs}, and
the ideal  $\frakj_{4,3} < \frakf_{4,3}(K)$ is
given by 
\[ \frakj_{4,3} = \frakj_{3,3} \oplus [\frakj_{3,3},\frakf_{4,3}],\]
where we map  $\frakj_{3,3}$ into $\frakf_{4,3}$ in the natural way.  

\begin{remark}\label{j in i}
Since the product of the roots of an Anosov polynomial is 
always $\pm 1,$  any ideal $\fraki < \frakf_{n,r}$ 
satisfying the Auslander-Scheuneman conditions for some $f$
must contain the ideal $\frakj_{n,r}$ (defined relative to 
an eigenvector basis). 
\end{remark}

\begin{proposition}\label{Fdecomp} 
Let $A$ be a semisimple 
matrix in  $GL_n(\boldZ)$ whose characteristic polynomial
has splitting field $K.$
Let $\frakf_{n,r}(K) = \oplus_{i=1}^r V_i(K)$ be the free $r$-step nilpotent
Lie algebra on $n \ge 3$ generators over $K$, 
endowed with the rational structure
defined by a Hall basis $\calB.$  Let $f$ be the semisimple automorphism
of $\frakf_{n,r}(K)$ defined by the matrix $A$  and the Hall
basis $\calB.$
Let $\calC$ be the Hall basis of  $\frakf_{n,r}(K)$ determined by 
a set of eigenvectors 
$\calC_1$ for $f|_{V_1(K)}$ that is compatible with the rational structure.
 
  The ideal $\frakj_{n,r}$ defined in Definition \ref{j-def}
is a rational invariant subspace; and  
  when $r \ge 3,$ the subspace $V_3(K)$ is the
direct sum $F_1(K) \oplus F_{2}(K)$ of rational invariant subspaces 
where $F_1(K)$ and $F_2(K)$ are as in Equation \eqref{F defs},
while $F_2(K)$ decomposes as the direct sum 
 $F_2(K) = F_{2a}(K) \oplus F_{2b}(K)$ where 
$F_{2a}(K)$ and $F_{2b}(K)$ are rational invariant subspaces each of dimension
$(\begin{smallmatrix} n \\ 3 \end{smallmatrix})$. 
The characteristic polynomial for the restrictions of $f$ to
$F_1(K)$ is $q_1$ and the characteristic polynomials for the restriction 
of $f$ to $F_{2a}(K)$ and $F_{2b}(K)$ are both $q_2.$ 
\end{proposition}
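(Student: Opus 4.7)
My plan is to combine the combinatorial description of the Hall basis (Example \ref{2,3-Hall words}) with the Galois action of Definition \ref{action} and Proposition \ref{action-properties}, using Proposition \ref{p-properties}(\ref{eval-tilde-pi}) to read off eigenvalues from Hall words and Galois descent to promote $G$-invariance of a $K$-subspace to rationality. Throughout I normalize the eigenvector basis $\calC_1$ so that $G$ acts on $V_1(K)$ by permuting $\bfz_1, \ldots, \bfz_n$, and hence on each $V_i(K)$ by permuting bracket words in the $\bfz_j$'s; any non-standard bracket word that arises is rewritten in the Hall basis via antisymmetry and the Jacobi identity.

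For $\frakj_{n,r}$: each generating Hall word in which every letter appears exactly $k$ times is an $f$-eigenvector for $(\alpha_1 \cdots \alpha_n)^k$ by Proposition \ref{p-properties}(\ref{eval-tilde-pi}), so $\frakj_{n,r}$ is a sum of $f$-eigenspaces and therefore $f$-invariant; and since the multiplicity type of a Hall word is preserved by letter permutations, $G$ stabilizes the generating set and Galois descent gives rationality. For $V_3(K) = F_1(K) \oplus F_2(K)$ the decomposition is literally the partition $\calC_3 = \calC_3' \sqcup \calC_3''$; both summands are $f$-invariant and $G$-invariant by the same multiplicity-type argument (membership in $F_1$ versus $F_2$ is determined by whether a Hall word uses two or three distinct letters, a property preserved under permutation modulo Hall rewriting), hence rational. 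Proposition \ref{p-properties}(\ref{eval-tilde-pi}) identifies the eigenvalues of $f$ on $\calC_3'$ as $\{\alpha_i \alpha_j^2 : i \neq j\}$ with multiplicity one and on $\calC_3''$ as $\{\alpha_a \alpha_b \alpha_c : a<b<c\}$ with multiplicity two, yielding characteristic polynomials $q_1$ for $f|_{F_1(K)}$ and $q_2^2$ for $f|_{F_2(K)}$ in the notation of Equation \eqref{q1,q2}.

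For $F_2(K) = F_{2a}(K) \oplus F_{2b}(K)$: for each triple $\{a,b,c\}$ with $a<b<c$, the Hall monomials $u_{abc} = [[\bfz_b,\bfz_a],\bfz_c]$ and $v_{abc} = [[\bfz_c,\bfz_a],\bfz_b]$ share the $f$-eigenvalue $\alpha_a \alpha_b \alpha_c$, so they span a two-dimensional $f$-eigenspace $E_{abc}$, and $F_2(K) = \bigoplus_{a<b<c} E_{abc}$. Any choice of complementary one-dimensional lines $L_{abc}^{+}, L_{abc}^{-} \subset E_{abc}$ yields an $f$-invariant decomposition $F_2 = F_{2a} \oplus F_{2b}$ in which $\dim F_{2a} = \dim F_{2b} = \binom{n}{3}$ and each summand has characteristic polynomial $q_2$ (exactly one eigenvalue per triple, with multiplicity one).

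The main obstacle is rationality of $F_{2a}$ and $F_{2b}$, which requires the families $\{L_{abc}^{+}\}$ and $\{L_{abc}^{-}\}$ to be permuted by $G$, a coherence constraint across each $G$-orbit of triples. On a single $E_{abc}$ the stabilizer of $\{a,b,c\}$ in $G$ acts through a subrepresentation of the two-dimensional standard representation of $S_3$, so $E_{abc}$ itself splits rationally only when that stabilizer action is reducible; otherwise the splitting must come from grouping triples in a common $G$-orbit and defining $G$-invariant lines inside $E_{abc} \oplus E_{\sigma(abc)}$ via an intrinsic involution $\tau$ on $F_2(K)$ built from the Jacobi-relation pairing of $u_{abc}$ with $v_{abc}$, with $F_{2a}$ and $F_{2b}$ taken to be the $\pm 1$ eigenspaces of $\tau$. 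Verifying that such a $\tau$ is well defined combinatorially and commutes with the $G$-action on $F_2(K)$ is the technical heart of the last step, and it is what pins down the specific subspaces later invoked in Theorem \ref{Sn-Cn}(2b).
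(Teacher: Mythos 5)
Your first two paragraphs follow essentially the paper's route and are fine: $G$-stability of the relevant spans of Hall monomials plus Galois descent gives rationality and $f$-invariance of $\frakj_{n,r}$, $F_1(K)$ and $F_2(K)$, and the first part of Proposition \ref{p-properties} gives the characteristic polynomials $q_1$ and $q_2^2$ (both you and the paper implicitly work in the separable situation where $G$ carries each $\bfz_i$ to a scalar multiple of some $\bfz_j$; that is not where the trouble lies). The problem is the last claim, which you do not actually prove: you reduce the splitting $F_2(K)=F_{2a}(K)\oplus F_{2b}(K)$ to the construction of a $G$-equivariant involution $\tau$ built from the Jacobi pairing and then explicitly defer its verification (``the technical heart of the last step''). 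As written this is a genuine gap, and the proposed mechanism is doubtful: with your permutation normalization, the transposition fixing $\bfz_3$ sends $u=[[\bfz_2,\bfz_1],\bfz_3]\mapsto -u$ and $v=[[\bfz_3,\bfz_1],\bfz_2]\mapsto v-u$, while the transposition fixing $\bfz_1$ swaps $u$ and $v$, so the naive swap $u\leftrightarrow v$ does not commute with the action and no obviously ``combinatorial'' $\tau$ presents itself. Your criterion that ``$E_{abc}$ splits rationally only when the stabilizer action is reducible'' also conflates two different actions: rationality means stability under the semilinear Galois action of Definition \ref{action}, not under the linear standard representation of $S_3$ on $E_{abc}$, and irreducibility of the latter does not obstruct the existence of rational lines.

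The step can be closed with no equivariant choices at all, by the elementary-divisor argument the paper already invokes for Part \eqref{rat-subsp} of Proposition \ref{p-properties}. Having shown that $F_2(K)$ is rational and $f$-invariant with characteristic polynomial $q_2^2$, pass to its $\boldQ$-form $F_2(\boldQ)=F_2(K)\cap\frakf_{n,r}(\boldQ)$, on which $f$ restricts to a semisimple $\boldQ$-linear operator with the same characteristic polynomial. Writing $q_2=\prod_j m_j^{a_j}$ with $m_j$ irreducible over $\boldQ$, semisimplicity makes $F_2(\boldQ)$ a direct sum of simple $\boldQ[x]$-modules in which $\boldQ[x]/(m_j)$ occurs exactly $2a_j$ times; grouping half of each isotypic family produces two $f$-invariant $\boldQ$-subspaces each with characteristic polynomial $q_2$, hence each of dimension $\deg q_2$, and tensoring with $K$ gives $F_{2a}(K)$ and $F_{2b}(K)$. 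This is what the paper means when it says that each factor $q_2$ of $q_2^2$ yields one rational invariant subspace; note that the decomposition is in general not canonical, and the lines it cuts out in each eigenspace $E_{abc}$ need not be spanned by Hall monomials, so your search for a distinguished combinatorial splitting aims at more than the proposition asserts.
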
 

\begin{proof}  Let $G$ denote the Galois group of 
the polynomial $p_1$ associated to $f.$  Suppose that $\bfw$ is
a $k$-fold bracket of elements in $\calC_1 = \{\bfz_j\}_{j=1}^{n_1}.$

Recall from 
Example \ref{2,3-Hall words} 
that $\calC_3$ is the union of the set $\calC_3^\prime$
of standard Hall monomial of the first
type and the set $\calC_3^{\prime \prime}$ of
  standard Hall monomials of the second type.  Let $g \in S_n.$
It is easy to see that if
 $\bfw \in \calC_3^\prime$   then  $g \cdot \bfw \in \calC_3^\prime$
or  $-(g \cdot \bfw) \in \calC_3^\prime,$
and if  $\bfw \in \calC_3^{\prime \prime}$
   then  $g \cdot \bfw \in \calC_3^{\prime \prime},$ 
$-(g \cdot \bfw) \in \calC_3^{\prime \prime},$ or
  $g \cdot \bfw$ is a linear  combination of elements of 
$\calC_3^{\prime \prime}$ through the Jacobi Identity. 
 The action of the  group $G$ on
$\frakf_{n,r}(K)$ therefore preserves the subspaces $F_1$ and $F_2,$ so
that if $\bfw$ in $\calC_2$ is in $F_1(K)$ or $F_2(K),$ then $E_G^K(\bfw) <
F_1$ or $E_G^K(\bfw) < F_2(K)$ respectively.  The space $F_1(K)$
is the sum of the rational invariant spaces $E_G^K(\bfw)$ as $\bfw$ varies
over elements of $\calC_3^\prime$, so is rational and
invariant.  By the same reasoning $F_2(K)$ is 
rational and $f$-invariant also. 
  
The characteristic polynomial for the restriction of  $f$ to
$F_2(K)$ is $q_2^2,$ where $q_2$ is as defined in Equation \eqref{q1,q2}.
The pair of elements of form  $[[\bfz_i,\bfz_j],\bfz_k]$ and
$[[\bfz_k,\bfz_j],\bfz_i],$
where $1 \le j < i < k \le n,$ in  $ \calC_3$ have the same eigenvalue,
$\alpha_i \alpha_j \alpha_k,$ where $\alpha_i, \alpha_j, \alpha_k$ are
the eigenvalues of $\bfz_i, \bfz_j,$ and $\bfz_k$ respectively.   
These yield one basis vector for $F_{2a}$ and one basis vector for 
$F_{2b}.$
 Each factor $q_2$ in the characteristic polynomial 
$q_2^2$ for $F_2(K)$ yields one  rational invariant subspace of
$F_2(K)$ of dimension 
$\deg q_2 = (\begin{smallmatrix} n \\ 3 \end{smallmatrix}),$
each spanned by elements of $\calC_3^{\prime \prime}.$
Call these subspaces $F_{2a}(K)$ and $F_{2b}(K),$ so  
  $F_{2}(K) = F_{2a}(K) \oplus F_{2b}(K).$

Similarly, the set of $k$-fold brackets $\bfw$ of $\calC$ having the
property that each element $\bfz_i$ occurs the same number of times
in $\bfw$ is clearly invariant under the action of $G$, so the
ideal that it generates, $\frakj_{n,r},$
is $G$-invariant, hence rational.  
\end{proof}

The following elementary proposition yields restrictions on 
 possible dimensions of rational invariant
subspaces for semisimple automorphisms of  nilpotent Lie algebras. 

\begin{proposition}  Let $\frakf_{n,r}(\boldR)$ be a 
 free nilpotent Lie algebra, and
let $f : \frakf_{n,r}(\boldR) \to \frakf_{n,r}(\boldR)$
 be a semisimple automorphism 
 defined by a matrix $A$ in $GL_n(\boldZ)$ and a Hall basis 
 $\calB$ of $\frakf_{n,r}(\boldR).$  
Suppose that the characteristic polynomial $p_1$ of $A$ 
is irreducible  with  Galois group $G.$ 
Let $m$ be the dimension of a minimal nontrivial rational invariant subspace
 $E < \frakf_{n,r}(\boldR)$ for $f.$ 
Then $G$ has a  normal subgroup $N$  such that $G/N$ acts
faithfully and  transitively on a set of $m$ elements. 
\end{proposition}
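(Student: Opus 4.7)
The plan is to realize $m$ as the size of a single Galois orbit of an eigenvalue of $f$, and then take $N$ to be the kernel of the resulting permutation representation of $G$. First I would exploit semisimplicity: since $p_1$ is irreducible over $\boldQ$ its roots are distinct, so $A$ (and hence $f$) is semisimple by the remark following Proposition \ref{p-properties}. Semisimplicity of $f$ on $\frakf_{n,r}(\boldR)$ is equivalent to saying that, viewed as a $\boldQ[f]$-module, $\frakf_{n,r}(\boldR)$ decomposes as a direct sum of simple submodules, each of which is a copy of $\boldQ[x]/(\pi)$ for some monic irreducible $\pi \in \boldQ[x]$ dividing the minimal polynomial of $f$. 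A minimal nontrivial rational invariant subspace $E$ is precisely such a simple submodule, so there is an irreducible $\pi$ with $m = \dim_\boldQ E = \deg\pi$, and the spectrum of $f|_E$ viewed over the splitting field $K$ is exactly the full root set of $\pi$.

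Next I would exhibit the orbit and take a kernel. Fix a root $\alpha$ of $\pi$; it is an eigenvalue of $f$ acting on $\frakf_{n,r}(K)$. By Part \eqref{action-eigenspace} of Proposition \ref{action-properties} the Galois group $G$ permutes eigenspaces of $f$ through its action on eigenvalues, and because $\pi$ is irreducible over $\boldQ$, the $G$-orbit $G\cdot\alpha$ equals the full set of roots of $\pi$; in particular $|G\cdot\alpha| = \deg\pi = m$. Let
\[ N \;=\; \ker\!\bigl(G \to \operatorname{Sym}(G\cdot\alpha)\bigr). \]
As the kernel of a group homomorphism, $N$ is normal in $G$; the induced map $G/N \hookrightarrow \operatorname{Sym}(G\cdot\alpha)$ is a faithful action on an $m$-element set; and transitivity of $G/N$ on $G\cdot\alpha$ is inherited from transitivity of $G$ on its own orbit. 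This yields the required subgroup.

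The one nontrivial step is the first, namely verifying that $m = \deg\pi$ for a \emph{single} irreducible $\pi$. In view of Theorem \ref{actions}, where each summand $E_G^K(\bfz)$ carries characteristic polynomial of the form $r^s$ with possibly $s>1$, one might initially worry that minimal rational $f$-invariant subspaces have dimension $s \deg r$ rather than $\deg r$. The resolution is precisely the semisimplicity of $f$ over $\boldQ$: the $s$ isotypic copies of $\boldQ[x]/(r)$ sitting inside $(E_G^K(\bfz))^{\boldR}$ can be split into rational $f$-invariant pieces, so the genuinely minimal dimension is $\deg r$. After this point the argument is essentially formal bookkeeping about a faithful action on a Galois orbit.
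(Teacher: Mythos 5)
Your proof is correct and follows essentially the same route as the paper: both identify $m$ with the degree of the (irreducible, by semisimplicity/rational canonical form) characteristic polynomial of $f|_E$, whose roots lie in the splitting field of $p_1$, and then let $G/N$ be the Galois group of that polynomial acting faithfully and transitively on its $m$ roots. Your $N$, the kernel of the permutation action on the orbit $G\cdot\alpha$, is exactly the paper's subgroup fixing $\boldQ(p_E)$, so the difference is only in bookkeeping.
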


Note that the subspace $E$ is one-dimensional if and only if $N = G.$
\begin{proof} Suppose that $E$ is a minimal nontrivial invariant
subspace  of dimension $m.$ 
The characteristic polynomial $p_E$ for the restriction of 
$f$ to $E$ is irreducible. Since
the roots of $p_E$ are contained in the splitting field $\boldQ(p_1),$
the Galois group for  $p_E$ 
 is the quotient  $G/N$ of $G$ by the normal
subgroup of elements of $G$ fixing $\boldQ(p_E).$
 The group $G/N$ acts faithfully and 
transitively on the $m$ roots of  $p_E$  since it the  Galois group of
an irreducible polynomial. 
\end{proof}

The previous proposition can be used to find all possible dimensions
of minimal nontrivial rational invariant subspaces for any
Anosov automorphism whose associated  Galois group is some 
fixed group $G.$  One simply needs to find all numbers $m$ such that
there exists a normal subgroup 
$N$ of $G$ and there exists a faithful transitive
action of $H = G/N$ on a set of $m$ elements.  
Every faithful transitive action of a group $H$ on a set $X$ is conjugate
to the action of $H$ on the cosets $X^\prime = \{hK\}_{h \in H}$ of a subgroup
$K$ of $H$ such that $K$ contains no nontrivial  normal subgroups of
$H.$ To find all faithful transitive actions of a 
group $H = G/N,$  one must 
list all subgroups of $H$ and eliminate any that contain
nontrivial normal subgroups.  The cardinalities  $|H| / |K|$ of the set
$\{hK\}_{h \in H}$ are admissible  values for the cardinality $m$ of a
set $X$ on which $H$ acts faithfully and transitively.  In our situation,
where $G$ is the Galois group of $p_1,$ 
the number $m$ could be the degree of a polynomial 
having Galois group $G/N,$  and  $m$ could be the
 dimension of a rational invariant subspace of the
corresponding automorphism of $\frakf_{n,r}(\boldR).$

In Table  \ref{degrees} we analyze the possible dimensions of
rational invariant subspaces of Anosov Lie algebras $\frakn$ for which
$p_1$ is irreducible and degree 3 or 4.  
The first two columns in the table  give all possible Galois
groups $G$ for irreducible polynomials degrees three and four,
grouped by degree.   
 The fourth and fifth columns list the isomorphism class of 
proper normal subgroups $N$ of each group $G,$ and the quotients
$G/N.$  (Since $\frakn$ is Anosov, there are no one-dimensional rational 
invariant subspaces, and we omit the case $N = G$.) 
The quotient groups $G/N$ are potential 
 Galois groups for
characteristic polynomials of rational invariant subspaces
of an Anosov Lie algebra with polynomial $p_1$ having Galois group $G.$
 The last column gives the cardinalities of sets
 on which each $G/N$ can act faithfully and transitively, found by 
the procedure described above.  
 The numbers in the last column are listed in the same order
as the subgroups in the second column, with the numbers  for different
subgroups separated by semicolons.  
 In the third column
we show when the roots of a polynomial with given Galois group
must have full rank by  Proposition \ref{full rank}. 
When the set of roots has full rank, 
some possibilities for the normal subgroup $N$ may be prohibited
by Lemma \ref{how it can factor}:  these subgroups and the corresponding
dimensions are indicated in the table with asterisks.

\renewcommand{\arraystretch}{2}
\begin{table}
\begin{tabular}{|c|c||c|l|l|l|} \hline $\deg p_1$ & $G$ & full rank? 
&$N \ne G$ &
$G/N$ & dimension $m$ of  $E$ \\ 
\hline \hline
 $3$ & $S_3$ & yes & $C_3^\ast, \{1\}$ & $C_2^\ast, S_3$ & $2^\ast;3,6$ \\ 
\hline 
$3$ & $C_3$ & yes & $\{1\}$ & $C_3$ & $3$ \\ 
\hline \hline 
$4$ & $S_4$ & yes & $A_4^\ast, V_4^\ast, \{1\}$ & $C_2^\ast,S_3^\ast,S_4$ & 
       $2^\ast; 3^\ast, 6^\ast; 4, 6, 8, 12, 24$ \\ 
\hline $4$ & $A_4$ & yes & $V_4^\ast, \{1\}$ &
$C_3^\ast,A_4$  & $3^\ast; 4, 12 $ \\ 
\hline $4$ & $D_8$ & no & $C_4, C_2, \{1\}$ & $C_2, V_4, D_8$ & $2; 4;  4, 8$ \\ 
\hline $4$ & $C_4$ & no & $C_2, \{1\}$ & $C_2, C_4$ & $2;4$ \\ 
\hline $4$ & $V_4$ & no & $C_2, \{1\}$ & $C_2, V_4$ & $2;4$ \\
\hline
\end{tabular} \bigskip
\caption{\label{degrees} Possible dimensions $m > 1$ for
 rational invariant subspaces $E$ for an
 Anosov automorphism  of an $r$-step nilpotent
Lie algebra $\frakn = \frakf_{n,r}(\boldR)/\fraki$
 when $n = 3$ or $4$ and $p_1$ is irreducible.  An
asterisk indicates that the marked values of $N, G/N$ and  
$\dim E$ cannot occur by Lemma \ref{how it can factor}, Part \ref{N-not-trans}.   }
\end{table}
From the table we obtain the following corollary to Theorem 
\ref{actions}.

\begin{corollary}\label{dimensions}
Let $f$ be a semisimple automorphism of $\frakf_{n,r}(\boldR)$ induced by 
a hyperbolic matrix in $GL_n(\boldZ)$ and a Hall basis $\calB.$
Let $(p_1, \ldots, p_r)$ be the $r$-tuple of polynomials associated to 
$f.$  If $p_1$ is irreducible, then the dimension 
of any minimal nontrivial invariant subspace of $\frakf_{n,r}(\boldR)$ is $3$ or $6$ if
$n=3$ and is one of $2, 4,  6, 8, 12, 24$ if $n =4.$
\end{corollary}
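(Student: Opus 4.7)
The plan is to invoke the previous proposition directly and simply read off the admissible dimensions $m$ from Table \ref{degrees}. So the first step would be to fix an irreducible $p_1$ and let $G$ denote its Galois group. Since $p_1$ is irreducible, $G$ acts transitively on the $n$ roots of $p_1$ and is a transitive subgroup of $S_n$. For $n=3$, this forces $G \in \{C_3, S_3\}$; for $n=4$, $G \in \{C_4, V_4, D_8, A_4, S_4\}$. These are exactly the rows of Table \ref{degrees}.

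Next, given a minimal nontrivial rational invariant subspace $E < \frakf_{n,r}(\boldR)$ of dimension $m$, by the preceding proposition there is a normal subgroup $N \triangleleft G$ such that $G/N$ acts faithfully and transitively on the set of $m$ roots of $p_E$. I would then enumerate all normal subgroups $N$ of each candidate $G$, compute the quotient $G/N$, and list all cardinalities $m = |H|/|K|$ of faithful transitive $H$-sets for $H = G/N$ (where $K < H$ ranges over subgroups containing no nontrivial normal subgroup of $H$). This is the content of the last column of Table \ref{degrees}.

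Then I would eliminate the starred entries. By Proposition \ref{full rank}, Part \eqref{bh-gen} combined with \eqref{2t->fr}, when $G = S_3, C_3, S_4,$ or $A_4$ the set of roots of $p_1$ has full rank (these groups are doubly transitive or contain a normal Sylow $p$-subgroup giving a $C_p$ acting via the principal-plus-irreducible decomposition). For these cases, Lemma \ref{how it can factor}, Part \eqref{N-not-trans}, rules out any nonlinear irreducible factor $q$ of some $p_i$ whose fixing subgroup $N$ acts transitively on the roots of $p_1$; this kills exactly the starred rows (e.g. $N = C_3 \triangleleft S_3$, $N = A_4, V_4 \triangleleft S_4$, $N = V_4 \triangleleft A_4$), since in each case $N$ itself is still transitive on the root set. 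Nonstarred entries survive and give the admissible $m$.

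Finally, collecting the surviving entries: for $n=3$ the admissible values are $\{3,6\}$ (from $G=S_3$ giving $3,6$ via $N=\{1\}$, and from $G=C_3$ giving $3$); for $n=4$ the admissible values are $\{2,4,6,8,12,24\}$ (from $G=S_4$ giving $4,6,8,12,24$; from $G=A_4$ giving $4,12$; from $G=D_8$ giving $2,4,8$; from $G=C_4$ giving $2,4$; from $G=V_4$ giving $2,4$). This yields the stated list. The only real obstacle is making sure the enumeration of normal subgroups, quotients, and faithful transitive actions in Table \ref{degrees} is complete and that the starred eliminations are correctly justified by Lemma \ref{how it can factor}; since the groups involved are small and their subgroup lattices well-known, this is a finite verification rather than a conceptual difficulty.
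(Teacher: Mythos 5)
Your proposal is correct and follows essentially the same route as the paper: the preceding proposition reduces the question to enumerating, for each transitive Galois group $G$ of degree $3$ or $4$, the normal subgroups $N$ and the cardinalities of faithful transitive actions of $G/N$, with the starred cases excluded via the full-rank property and Lemma \ref{how it can factor}, Part \eqref{N-not-trans} — exactly the content of Table \ref{degrees}. The only blemish is a citation slip: full rank for $C_3$ comes from Part \eqref{irr-rep} of Proposition \ref{full rank} (principal plus $\boldQ$-irreducible), not Part \eqref{bh-gen}, though the argument you actually describe is the right one.
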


\section{Automorphisms with cyclic and symmetric  Galois
 groups}\label{symmetric-cyclic}
 In this section we
 use Theorem
\ref{actions}  to analyze the structure of Anosov Lie algebras whose
associated polynomial $p_1$ has either a small Galois group, such as a
cyclic group,  or a large Galois group, such as a symmetric group.
The following theorem describes Anosov automorphisms associated to
 Galois groups whose actions on the roots of $p_1$ is highly transitive.

\begin{thm}\label{Sn} Suppose that $\frakn$ is a real $r$-step Anosov Lie 
algebra admitting an Anosov automorphism
defined by  a semisimple matrix $A$ in $GL_n(\boldZ),$ a Hall basis
$\calB,$ and an ideal $\fraki < \frakf_{n,r}(\boldR)$ 
satisfying Auslander-Scheuneman conditions.
 Suppose that the polynomial $p_1$ associated to $f$ is irreducible with 
Galois group $G.$ 
  Let $(p_1, \ldots, p_r)$ be the $r$-tuple of
polynomials associated to $p_1.$
\begin{enumerate}
\item{If the action of $G$ on the roots of $p_1$ is two-transitive, then  
 the polynomial $p_2$ is irreducible and Anosov,  and 
if $r=2,$ then $\frakn$ is isomorphic to the free nilpotent
algebra $\frakf_{n,2}(\boldR).$}\label{2,2}
\item{If the action of $G$ on the roots of $p_1$ is three-transitive 
and $r=3,$ then $\frakn$ is isomorphic to 
$\frakf_{n,3}(\boldR)/\fraki,$ where $\fraki$ is trivial or 
a sum of $F_1(\boldR),$  $F_{2a}(\boldR),$  and $F_{2b}(\boldR),$ 
where $F_1(\boldR)$
is as defined in Equation \eqref{F defs}, 
and $F_{2a}(\boldR)$ and  $F_{2b}(\boldR)$ are as
in Proposition \ref{Fdecomp}.  
If $n=3,$ then $\fraki$ contains $F_2(\boldR).$
}
\item{A Lie algebra 
 $\frakf_{n,r}(\boldR)/\fraki$  of type  $S_n$  is Anosov 
 so long as the  ideal  $\fraki$  contains the 
ideal $\frakj_{n,r}$ as defined in Definition \ref{j-def} }\label{sn is this}
\end{enumerate}
\end{thm}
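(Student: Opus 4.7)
The plan is to treat each of the three parts via the Galois-orbit machinery of Theorem \ref{actions} and the full-rank technology of Section \ref{polynomials}.

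For Part 1, two-transitivity of $G$ on $\{\alpha_1, \ldots, \alpha_n\}$ makes $\calC_2 = \{[\bfz_j, \bfz_i] : i < j\}$ a single $G$-orbit, so Theorem \ref{actions} gives $V_2(K) = E_G^K([\bfz_2, \bfz_1])$ and $p_2 = r^s$ for some irreducible $r$. Proposition \ref{full rank}(2) yields full rank, and Lemma \ref{how it can factor}(2c) then forces $s = 1$, so $p_2$ is irreducible. To show $p_2$ is Anosov, I would rule out modulus-one roots: by Remark \ref{roots of modulus one}, such a root would make $p_2$ self-reciprocal, giving a relation $\alpha_i \alpha_j \alpha_{i'} \alpha_{j'} = 1$ for some $\{i',j'\}$; for $n \geq 5$ this contradicts full rank, while $n = 3, 4$ are handled directly using the explicit classification of two-transitive Galois groups on three and four letters together with Remark \ref{roots of modulus one}. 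When $r = 2$, the only rational invariant subspaces of $V_2$ are $\{0\}$ and $V_2$ itself, and with $p_2$ Anosov the Auslander-Scheuneman condition (4) is vacuous; so $\fraki = 0$ and $\frakn \cong \frakf_{n,2}(\boldR)$.

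For Part 2, three-transitivity implies two-transitivity, so the $V_2$-analysis follows from Part 1. For $V_3$, Proposition \ref{Fdecomp} provides the decomposition $V_3 = F_1 \oplus F_{2a} \oplus F_{2b}$ into rational invariant subspaces. Three-transitivity of $G$ on $\{1, \ldots, n\}$ makes each of these a single $G$-orbit $E_G^K(\bfw)$: for $F_{2a}, F_{2b}$ one uses transitivity on unordered triples $\{i,j,k\}$, while for $F_1$ one uses transitivity on ordered pairs $(i, j)$ with $i \neq j$. By Theorem \ref{actions}, these are the minimal nontrivial rational invariant subspaces of $V_3$, so $\fraki \cap V_3$ is a sum of them. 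When $n = 3$, $q_2(x) = x - \alpha_1 \alpha_2 \alpha_3$ reduces to $x - 1$ after the passage to $f^2$ from Remark \ref{f^2}, so the corresponding modulus-one eigenspaces in $F_2$ must lie in $\fraki$ by condition (4), forcing $F_2 \subseteq \fraki$. For $n \geq 4$, a full-rank argument parallel to Part 1 shows no modulus-one eigenvalues appear on $F_1, F_{2a}, F_{2b}$, leaving the four listed possibilities.

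For Part 3, I would start by invoking Proposition \ref{existence}(2) to produce an Anosov polynomial $p_1$ of degree $n$ with Galois group $S_n$, and set $f = f^A$ on $\frakf_{n,r}(\boldR)$. Because $\fraki$ is of type $S_n$, it is generated by subspaces $(E_{S_n}^K(\bfw))^\boldR$, each of which is rational and $f$-invariant by Theorem \ref{actions}, verifying Auslander-Scheuneman conditions (1) and (3); condition (2) is automatic by Remark \ref{as cond 2}. For condition (4), I would show that every modulus-one eigenspace of $f$ lies in $\frakj_{n,r} \subseteq \fraki$. Full rank (from two-transitivity of $S_n$, via Proposition \ref{full rank}(2)) together with complex conjugation $c \in G$ gives that an eigenvalue $\prod_j \alpha_j^{d_j}$ has modulus one iff $d_j + d_{c(j)}$ is constant in $j$; using the $S_n$-invariance of $\fraki$ and a case analysis on the cycle structure of $c$, each such eigenvector is shown to lie in $\frakj_{n,r}$. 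The main obstacle will be precisely this verification of condition (4): the step from the constraint ``$d_j + d_{c(j)}$ constant'' to membership in $\frakj_{n,r}$ requires combining the cycle structure of complex conjugation with the Galois-invariance of $\fraki$ and, in small cases, invoking Kronecker's theorem on algebraic integers on the unit circle or computing with explicit examples from Table \ref{low-degree-list}.
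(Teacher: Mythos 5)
Your outline follows the paper's: Parts 1 and 2 run on single $G$-orbits via Theorem \ref{actions}, full rank via Proposition \ref{full rank}, and the irreducibility statements of Lemma \ref{how it can factor}; Part 3 uses the companion matrix of an Anosov polynomial with group $S_n$, Remark \ref{as cond 2}, and rational canonical form. The genuine gap is in your handling of modulus-one roots. In Part 1 you try to deduce that $p_2$ is Anosov from two-transitivity and full rank alone, deferring $n=3,4$ to ``the classification of two-transitive groups together with Remark \ref{roots of modulus one}.'' For $n=4$ no such argument can exist: for any irreducible Anosov quartic with constant term $+1$ and no real roots --- the $A_4$ entry $x^4+2x^3+3x^2-3x+1$ of Table \ref{low-degree-list} is one --- the roots form conjugate pairs $\alpha,\bar\alpha,\beta,\bar\beta$ with $|\alpha|^2|\beta|^2=1$, so the four mixed products $\alpha\beta,\alpha\bar\beta,\bar\alpha\beta,\bar\alpha\bar\beta$ have modulus exactly one; hence $p_2$ is irreducible with doubly transitive group yet not Anosov, and no group-theoretic sharpening of Remark \ref{roots of modulus one} can rule this out (the sextic is self-reciprocal and $S_4, A_4$ act imprimitively on the six products, inside $C_2\wr S_3$). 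The assertion in the theorem is saved only by the standing hypothesis that $\fraki$ satisfies the Auslander--Scheuneman conditions: irreducibility of $p_2$ makes $V_2(\boldR)$ a minimal rational invariant subspace, so $\fraki\cap V_2(\boldR)$ is $0$ or $V_2(\boldR)$; the latter would force every $V_i(\boldR)$, $i\ge 2$, into the ideal and make the quotient abelian, so $\fraki\cap V_2(\boldR)=0$, and condition (4) then forbids modulus-one roots of $p_2$. Route the ``Anosov'' claim through this, as the paper implicitly does. (The same pattern infects your Part 2 claim that full rank excludes modulus-one eigenvalues on $F_1,F_{2a},F_{2b}$ --- e.g.\ $q_2$ at $n=6$ --- but that claim is not needed for the classification; what is needed is minimality of $F_1,F_{2a},F_{2b}$, which comes from Proposition \ref{Fdecomp}, three-transitivity, and the irreducibility of $q_1,q_2$ via Lemma \ref{how it can factor}, not from Theorem \ref{actions} alone.)

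Part 3 you leave unfinished by your own admission, and that is the other gap relative to the paper, which closes condition (4) in one line: by full rank, an eigenvalue $\pm1$ has all Hall-word exponents equal, so its eigenspace lies in $\frakj_{n,r}\subset\fraki$. Your worry about complex modulus-one eigenvalues is substantive --- constancy of $d_j+d_{c(j)}$ does not by itself place an eigenvector in $\frakj_{n,r}$ (for $x^4-x-1$ and $r\ge 4$, the degree-four word $\alpha_1\alpha_2\alpha_3^2$, with $\alpha_3$ complex, has modulus one but is not a constant word) --- but the cycle-type-plus-Kronecker analysis you sketch will not finish it. What does: since $x^n-x-1$ has a real root $\alpha_{j_0}$, constancy gives $k=2d_{j_0}$ even and the step $i=nk/2\ge n$, so for $r<n$ condition (4) is vacuous for this choice of $p_1$ and only the $\pm1$ case (already handled by full rank and $\frakj_{n,r}\subset\fraki$) can occur; for $r\ge n$ one should instead take $p_1$ totally real with Galois group $S_n$ (a unit generating a totally real $S_n$-field), so that every modulus-one eigenvalue is $\pm1$. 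As written, your Part 3 is a plan rather than a proof.
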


Note that when $G$ is $S_n,$ the Anosov Lie algebra need
not be of type $S_n$ as in Definition \ref{ideal-i}: the Lie algebra 
$\frakf_{n,3}/F_{2a}(\boldR)$ is not of type $S_n$ although it admits
an Anosov automorphism with symmetric Galois group. 
\begin{proof}
 Let $\alpha_1, \ldots, \alpha_{n}$ denote the roots of
$p_1,$ and let $\calC_1 = \{\bfz_1, \ldots, \bfz_n\}$ be a set of 
corresponding eigenvectors
of $f|_{V_1(K)}$ that is compatible with the rational structure
defined by $\calB.$  
Let $\calC = \cup_{i=1}^r \calC_{i}$ be the Hall basis
of $\frakf_{n,r}(K)$ determined by  $\calC_1.$

Suppose that the action of 
the Galois group $G$ on $\{\alpha_1, \ldots, \alpha_n\}$ 
is two-transitive. If the action of $G$ on the roots of
$p_1$ is two-transitive, then the set of roots of $p_1$
has full rank by Lemma \ref{how it can factor}.
   Since
the action of the group $G$ sends an eigenvector $\bfz_i$
to a multiple of another eigenvector $\bfz_j,$  the $G$ action
sends an element of 
 $\calC_2 = \{ [\bfz_k, \bfz_j]\}_{1 \le j < k \le n}$  to a scalar multiple of
an element of $\calC_2.$
Because the  action is doubly transitive,
 for any $\bfw$ in $ \calC_2,$ the rational invariant 
subspace $E_G^K(\bfw)$ is all of $V_2(\boldR).$  By Theorem
\ref{actions}, the characteristic polynomial $p_2$ 
 for the restriction of $f$  to $V_2(\boldR)$ is  a
 power of an irreducible polynomial.   But actually, $p_2$ itself
is irreducible by  Part \eqref{fr-p2-irr} of 
Lemma \ref{how it can factor}.
Thus, the only proper rational invariant subspace of $V_2(\boldR)$ is 
trivial, and when $r=2,$ the only two-step Anosov quotient of
  $\frakf_{n,2}(\boldR)$ is itself.
This proves the first part of the theorem.  

Now suppose $r=3$ and that the action of 
the Galois group $G$ on $\{\alpha_1, \ldots, \alpha_n\}$ 
is three-transitive. Then $G$ is two-transitive, and by the
 argument above, 
the only proper rational invariant subspace of $V_2(\boldR)$ is $\{0\}.$
Recall from Proposition \ref{Fdecomp} that $V_3(\boldR)$ is the direct sum 
$V_3(\boldR) = F_1 \oplus F_{2a} \oplus F_{2b}$ of the rational invariant
  subspaces
$F_1, F_{2a},$ and $F_{2b}$ where $F_2 = F_{2a} \oplus F_{2b},$ and 
the  characteristic polynomial for the restrictions of $f$ to
$F_1$ is $q_1$ and the characteristic polynomials for the restriction 
of $f$ to $F_{2a}$ and $F_{2b}$ are both $q_2.$ 
  By the three-fold transitivity of $G$, the subspaces 
$F_1$ and $F_{2}$ are all single $G$-orbits; hence
$q_1$ and $q_2^2$ are powers of  irreducibles.  But by 
Lemma \ref{how it can factor}, when $n\ge 3,$
 $q_1$ is irreducible, so $F_1$ has 
no nontrivial rational invariant subspaces, and when $n>3,$ the polynomial
$q_2$ is irreducible; hence the subspaces $F_{2a}$ and $F_{2b}$
are minimal nontrivial invariant subspaces.    
If $n=3,$ then $\alpha_1 \alpha_2 \alpha_3 =\pm 1, $ so  $q_2(x) =
x \pm 1,$ and $\fraki$ must contain $F_2.$ 
 Therefore, in order for an ideal $\fraki$ of $\frakf_{n,3}(\boldR)$
 to satisfy the Auslander-Scheuneman conditions relative
to $f$, it is necessary for  $\fraki$ to be a sum  of $\{0\}, F_1, F_{2a}$
 $F_{2b}.$ Thus, the
second part of the theorem holds.  

Now we consider the case that $G$ is symmetric. 
Assume that $\fraki_0$ is an ideal of
 $\frakf_{n,r}(\boldR)$  of type  $S_n$ relative to some Hall
basis $\calD,$ and $\frakf_{n,r}(\boldR)/\fraki_0$ is a Lie
algebra of type $S_n.$  Assume also that 
$\fraki_0$ contains $\frakj_{n,r}.$  We need to show that $\fraki_0$ is 
satisfies the Auslander-Scheuneman conditions relative to some automorphism
$f$ of  $\frakf_{n,r}(\boldR).$ 

 Let $A$ be the companion matrix to an
Anosov polynomial with Galois group $S_n,$ such as $p_1(x) = x^n - x - 1$
as in Proposition \ref{existence}.
 Together, $A$ and a set of generators $\calB_1$ determine an
automorphism $f$ of $\frakf_{n,r}(\boldR)$ that is rational relative to the 
Hall basis $\calB$ determined by $\calB_1.$  Let $\calC_1$ denote
the set of eigenvectors for $f$ in $V_1(\boldR),$ and let $\calC$ be the 
corresponding Hall basis.  
There is an ideal $\fraki$ isomorphic to $\fraki_0$ that is the
image of $\fraki_0$ under the isomorphism $g: \frakf_{n,r}(\boldR) \to \frakf_{n,r}(\boldR)$
defined by a bijection from $\calD$ to $\calC.$
The ideal $\fraki$ is invariant under
$f$ by Theorem \ref{actions}.  By Remark \ref{as cond 2}, 
the restriction of $f$ to $\fraki$
is unimodular. The third of the 
Auslander-Scheuneman condition holds by the theory of 
rational canonical forms. 
 The last condition is that all roots of modulus one or
minus one are in $\fraki:$ this holds because 
the set of 
roots of $p_1$ has full rank by Proposition \ref{full rank},
and  $\frakj_{n,r} < \fraki.$
Thus, $f$ descends to an Anosov automorphism of 
$\frakf_{n,r}(\boldR)/\fraki \cong \frakf_{n,r}(\boldR)/\fraki_0.$
\end{proof}

We can completely describe Anosov  Lie algebras whose associated 
polynomials $p_1$ are irreducible of prime degree $n \ge 3$ 
with cyclic Galois group. 
\begin{thm}\label{Cn}   
Suppose that 
$\frakn = \frakf_{n,r}(\boldR)/ \fraki$ is an $r$-step Anosov Lie 
algebra of type $(n_1, \ldots, n_r)$ 
admitting an Anosov automorphism defined by  
a semisimple hyperbolic matrix in $GL_n(\boldZ),$ 
rational Hall basis $\calB,$ the resulting automorphism  
$f$ in $\Aut(\frakf_{n,r}),$
and an ideal $\fraki$ satisfying Auslander-Scheuneman conditions.
  Let $(p_1, \ldots, p_r)$
be the associated $r$-tuple of polynomials. Suppose that $p_1$ 
is irreducible of prime degree 
 $n = n_1 \ge 3,$  and that $p_1$ has cyclic  Galois
group $G.$  
Then  the ideal $\fraki$ is of cyclic type as in Definition \ref{ideal-i}, and
$\fraki$ contains $\frakj_{n,r},$ where
  $\frakj_{n,r}$ is as defined in Definition \ref{j-def}.  
Furthermore, $n = n_1$ divides $n_i$ for all $i=2, \cdots, r.$

Conversely, for any prime $n \ge 3,$ a Lie algebra 
$\frakf_{n,r}(\boldR)/\fraki$ 
of cyclic type is Anosov,  as long as the ideal $\fraki$ contains
 $\frakj_{n,r}.$ 
\end{thm}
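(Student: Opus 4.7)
The plan is to prove both directions by first establishing that the roots of $p_1$ have full rank in the sense of Definition \ref{full rank-def}.  Since $p_1$ is irreducible of prime degree $n$ with cyclic Galois group $G \cong C_n$, the action of $G$ on the $n$ roots is the regular representation of $C_n$.  Its permutation representation on $\boldQ^n$ decomposes as the trivial representation plus an $(n-1)$-dimensional representation that is irreducible over $\boldQ$ (arising from the extension $\boldQ(\zeta_n)/\boldQ$).  Proposition \ref{full rank}, Part \eqref{irr-rep} then yields full rank.  Because $n \ge 3$ is prime, $|G| = n$ is odd, so Lemma \ref{odd-good} guarantees that every eigenvalue of $f$ of modulus one equals $\pm 1$.

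For the forward direction, I would apply Theorem \ref{actions} to obtain, for each $i = 2, \ldots, r$, a decomposition of $V_i(\boldR)$ into rational $f$-invariant subspaces of the form $E_G^K(\bfw)^\boldR$ indexed by Hall monomials $\bfw \in \calC_i$.  Each $G$-orbit has size dividing the prime $n$, hence size $1$ or $n$; combined with Lemma \ref{how it can factor}, Part \eqref{factor-degree}, which forces the degree of any $\boldZ$-irreducible factor of $p_i$ to be a multiple of $n/\gcd(n,i)$, this pins down each $E_G^K(\bfw)^\boldR$ either as a one-dimensional subspace with rational eigenvalue $\pm 1$ (necessarily inside $\frakj_{n,r}$) or as a minimal rational invariant subspace of dimension $n$ with irreducible characteristic polynomial.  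Any ideal $\fraki < [\frakf_{n,r},\frakf_{n,r}]$ satisfying the Auslander-Scheuneman conditions is rational and $f$-invariant, hence a direct sum of these pieces, which is exactly the cyclic-type condition of Definition \ref{ideal-i}.  The containment $\fraki \supseteq \frakj_{n,r}$ is Remark \ref{j in i}, and the divisibility $n_1 \mid n_i$ is Corollary \ref{prime-dim}.

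For the converse, I would use Proposition \ref{existence} to select an Anosov polynomial $p_1$ of prime degree $n$ with Galois group $C_n$, take $A$ to be its companion matrix, and let $f$ be the resulting automorphism of $\frakf_{n,r}(\boldR)$ relative to the Hall basis $\calB$.  By Remark \ref{f^2}, after replacing $f$ by $f^2$ if necessary, I may assume $\alpha_1 \cdots \alpha_n = 1$ with all real eigenvalues positive.  Given any ideal $\fraki$ of cyclic type containing $\frakj_{n,r}$, I would verify the four Auslander-Scheuneman conditions for $f$ and $\fraki$: invariance follows from Theorem \ref{actions} applied to each generating orbit-subspace, unimodularity of the restriction from Remark \ref{as cond 2}, and the existence of an integral basis of $\boldZ$-linear combinations of elements of $\calB$ from rationality together with rational canonical form theory.

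The hard part will be the fourth Auslander-Scheuneman condition --- that every eigenspace for an eigenvalue of modulus one is contained in $\fraki$.  Since $|G|$ is odd, Lemma \ref{odd-good} reduces this to analyzing eigenvectors with eigenvalue exactly $1$.  By Proposition \ref{p-properties}, Part \eqref{eval-tilde-pi}, each Hall-monomial eigenvector $\bfw$ has eigenvalue $\alpha_1^{d_1} \cdots \alpha_n^{d_n}$, where $d_j$ records how many times the $j$-th generator appears in $\bfw$.  Setting this product equal to $1$ and invoking the full-rank property of $\{\alpha_1,\ldots,\alpha_n\}$ forces $d_1 = d_2 = \cdots = d_n$, placing $\bfw$ in $\frakj_{n,r} \subseteq \fraki$.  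This settles the fourth condition and shows that $f$ descends to an Anosov automorphism of $\frakf_{n,r}(\boldR)/\fraki$, completing the converse.
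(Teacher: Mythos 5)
Your proposal is correct and follows essentially the same route as the paper: the forward direction rests on the orbit decomposition of Theorem \ref{actions} together with the primality of $|G|=n$ (so any irreducible factor of $p_i$ has degree $1$ or $n$), and the converse builds $f$ from a cyclic Anosov polynomial supplied by Proposition \ref{existence} and checks the Auslander-Scheuneman conditions; your handling of the fourth condition via full rank (Proposition \ref{full rank}) plus Lemma \ref{odd-good} is in fact more explicit than the paper's appeal to Remark \ref{roots of modulus one}, since it also accounts for rational eigenvalues $\pm 1$. Two small repairs are needed. First, your dichotomy ``one-dimensional with eigenvalue $\pm 1$'' versus ``$n$-dimensional minimal'' is not quite right: an orbit span whose eigenvalue is $\pm 1$ need not be one-dimensional (in $\frakf_{3,3}$ the $G$-orbit of $[[\bfz_2,\bfz_1],\bfz_3]$ spans the two-dimensional subspace $F_2=\frakj_{3,3}$, whose characteristic polynomial is $(x \mp 1)^2$); the correct split is by whether the eigenvalue of the Hall monomial is rational, and in that case full rank still places the entire span inside $\frakj_{n,r} \subseteq \fraki$, so your conclusion is unaffected. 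Second, Corollary \ref{prime-dim} carries the hypothesis $1 < r < n_1$, which Theorem \ref{Cn} does not impose; it is cleaner to read the divisibility $n_1 \mid n_i$ directly off your own decomposition, as the paper does: each $V_i(\boldR)$ is the direct sum of $\fraki \cap V_i(\boldR)$ and $n$-dimensional minimal invariant subspaces, so the $i$th step of the quotient has dimension divisible by $n$.
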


\begin{proof} 
Let $\fraki, \frakf_{n,r}$ and $(p_1,\ldots, p_r)$
  be as in the statement of the theorem.  
By Remark \ref{j in i}, the ideal
$\frakj_{n,r}$ is contained in $\fraki.$ 
 Recall that any irreducible polynomial in $\boldZ[x]$
of prime degree $n \ge 3$
 and cyclic Galois group has totally real roots; hence $f$ 
 has real spectrum.

Let $\calC_1$ be a basis of eigenvectors for $f|_{V_1(\boldR)}$ that is 
compatible with the rational structure, and let
$\calC = \cup_{i=1}^r \calC_i$ be the Hall basis defined by 
$\calC_1.$ 
 By Theorem \ref{actions}, for any $\bfw$ in $\calC_i,$ 
the orbit  
\[E_G^K(\bfw)= \myspan_\boldR \{ \sigma \cdot \bfw \, : \, \sigma \in G \}\] 
is a
 rational invariant subspace whose
characteristic polynomial  is a power $r^s$ of
an irreducible polynomial $r.$  The 
dimension $d$ of 
$E_G^K(\bfw) $
  is  $n$ or less because $|G| = n.$ 
The dimension $d$ also satisfies $d = s \cdot \deg r.$  
Because the splitting field for $r$ is a subfield of $\boldQ(p_1)$
and the Galois group $G$ for $p_1$ is isomorphic to the simple group 
$C_n,$ either 
$r$ is linear or $r$ is irreducible of degree $n$ and has Galois group $G.$
 Hence, either 
 $\fraki(G,\bfw)$ is  contained in $\fraki,$ or it is $n$-dimensional
and its intersection with $\fraki$ is trivial. 
The ideal $\fraki$ is then a direct sum of subspaces 
of form  $E_G^K(\bfw),$ hence is of cyclic type. 
 Since 
each step $V_i$ decomposes as the direct sum of $\fraki \cap V_i$
and $n$-dimensional  subspaces 
of the form $E_G^K(\bfw),$ for $\bfw \in \calC_i,$ 
 the dimension of the $i$th step of the quotient
$\frakn$ is divisible by $n,$ for all $i =2,\ldots,n.$ 

Now let $\frakn$ be the quotient
of $\frakf_{n,r}(\boldR) = \oplus_{i=1}^r V_i$  by an ideal
$\fraki_0$ of cyclic type relative to some Hall basis
$\calD = \cup_{i=1}^r \calD_i$, 
where $n \ge 3$ is prime, and suppose that 
$\fraki_0 > \frakj_{n,r}$ where $\frakj_{n,r}$ is defined relative to $\calD$.
  We will show that $\frakn$ is Anosov.   By 
Proposition \ref{existence}, there exists an Anosov polynomial 
$p_1$ whose Galois group is  cyclic of order $n.$  By using the companion 
matrix to $p_1$ and a Hall basis $\calB$
 we can define an automorphism $f$ of $\frakf_{n,r}(\boldR).$
Let $\calC_1$ be an eigenvector basis for $V_1$  that is compatible with
the rational structure defined by $\calB,$ and let $\calC$ be the 
Hall basis defined by $\calC_1.$  Then there is an ideal $\fraki$
of  $\frakf_{n,r}(\boldR)$ that is cyclic relative to the $G$ action on the Hall basis 
$\calC$ and is  
isomorphic to $\fraki_0.$ 

The ideal $\fraki$ is rational and invariant by Theorem \ref{actions}.
All we need to show is that the quotient map $\overline{f}$
 on $\frakf_{n,r}(\boldR)/\fraki$
 has no roots of modulus one.  But we have already shown in the first
part of the proof that if a rational
invariant subspace $\fraki(\bfw,G)$ is not in $\fraki,$ then
the characteristic polynomial $r$ for the restriction of $f$ to 
   $\fraki(\bfw,G)$ has odd degree $n.$  By Remark \ref{roots of modulus one},
$r$ has no roots of modulus one.  Hence  $\overline{f}$ is Anosov.
\end{proof}

\section{Anosov automorphisms in low dimensions}
\label{2&3}

In this section we describe Anosov automorphisms of some 
nilpotent Lie algebras that arise from  Anosov polynomials of low degree.  

\subsection{When $p_1$ is a product of quadratics}
We will analyze  Anosov automorphisms for which the associated polynomial
$p_1$ is a product of quadratic polynomials.   To do this  we 
need to define a family of two-step nilpotent Lie algebras.

\begin{definition}\label{graph} Let $\frakf_{n,2}(\boldR) = V_1(\boldR) \oplus V_2(\boldR)$  be the
free two-step Lie algebra on $2n$ generators, where $n \ge 2.$  Let  $\calC_1 =
\{\bfz_1,  \ldots \bfz_{2n}\}$  be a set of generating vectors spanning $V_1(\boldR),$
and let $\calC$ be the Hall basis  determined by $\calC_1.$
Let $S_1$ and $S_2$ be  subsets of the set  $\{ \{i,j\} \, : 1 \le i < j \le n\}$ of  subsets of 
$\{1, 2, \ldots, n\}$ of cardinality two.  
To the subsets $S_1$ and $S_2$ associate the ideal
$\fraki(S_1, S_2)$ of $\frakf_{n,2}(\boldR)$ defined by
\begin{multline} 
\fraki(S_1, S_2) = \bigoplus_{i=1}^n \myspan \{ [\bfz_{2i-1},\bfz_{2i}] \}
 \oplus \bigoplus_{\{i,j\} \in S_1} \myspan \{ [\bfz_{2i-1}, \bfz_{2j-1}], 
[\bfz_{2i}, \bfz_{2j}]\}
\oplus \\ 
\bigoplus_{\{i,j\} \in S_2} \myspan \{ [\bfz_{2i-1}, \bfz_{2j}], 
[\bfz_{2i}, \bfz_{2j-1}]\} \end{multline}
Define the two-step nilpotent Lie algebra $\frakn(S_1,S_2)$ to be
$\frakf_{2n,2}(\boldR)/ \fraki(S_1,S_2).$  A two-step Lie algebra
of this form will be said to be of {\em quadratic type}.
\end{definition}

In the next theorem we classify two-step Anosov Lie algebras such that 
the polynomial $p_1$ is a product of quadratics. 
\begin{thm}\label{2+2+...}  
Suppose that the polynomial $p_1$  associated to a semisimple
 Anosov automorphism
$f$ of a two-step
 Anosov Lie algebra $\frakn$ is the product of quadratic polynomials.  
Then $\frakn$ is of quadratic type, as defined in Definition \ref{graph},
and all the eigenvalues of $f$ are real. 
Furthermore, every two-step nilpotent  Lie algebra of quadratic type is Anosov.
\end{thm}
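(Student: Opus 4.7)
For reality, factor $p_1 = q_1 q_2 \cdots q_n$ as a product of irreducible quadratics in $\boldZ[x]$. Since $p_1(0) = \pm 1$ equals the product of the integer constant terms $q_i(0)$, each $q_i(0) = \pm 1$. If some $q_i$ had a pair of complex conjugate roots $\alpha,\bar\alpha$, then $|\alpha|^2 = \alpha\bar\alpha = \pm 1$ would force $|\alpha|=1$, contradicting the Anosov hypothesis on $p_1$. So all eigenvalues of $f$ are real.

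For the classification, apply Remark \ref{f^2} to reduce to the case $q_i(x) = x^2 - d_i x + 1$ with positive roots $\alpha_i,\alpha_i^{-1}$. Proposition \ref{p-properties} gives a rational decomposition $V_1 = E_1 \oplus \cdots \oplus E_n$ with $\dim E_i = 2$. After choosing rational bases $\bfz_{2i-1},\bfz_{2i}$ in each $E_i$, we obtain the $f$-invariant rational decomposition
\[
V_2 \;=\; \bigoplus_i \myspan\{[\bfz_{2i-1},\bfz_{2i}]\} \;\oplus\; \bigoplus_{i<j}(E_i \wedge E_j).
\]
Each $\myspan\{[\bfz_{2i-1},\bfz_{2i}]\}$ has eigenvalue $\alpha_i\alpha_i^{-1}=1$, so it lies in $\fraki$ by the fourth Auslander--Scheuneman condition; these account for the first family of summands in $\fraki(S_1,S_2)$. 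For each pair $\{i,j\}$, Theorem \ref{actions} and the Galois analysis of $E_i \wedge E_j$ show that its eigenvalues $\alpha_i^{\pm 1}\alpha_j^{\pm 1}$ group into two reciprocal Galois orbits. Thus the rational $f$-invariant subspaces of $E_i\wedge E_j$ are $\{0\}$, all of $E_i\wedge E_j$, or (precisely when $\boldQ(\alpha_i)=\boldQ(\alpha_j)$) two distinguished two-dimensional subspaces. After an adapted change of rational basis in each $E_i$, these two-dimensional subspaces coincide with $\myspan\{[\bfz_{2i-1},\bfz_{2j-1}],[\bfz_{2i},\bfz_{2j}]\}$ and $\myspan\{[\bfz_{2i-1},\bfz_{2j}],[\bfz_{2i},\bfz_{2j-1}]\}$, matching Definition \ref{graph}. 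Hence $\frakn \cong \frakn(S_1,S_2)$ for suitable $S_1,S_2$.

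For the converse, given any $S_1,S_2$, I would produce an Anosov automorphism of $\frakn(S_1,S_2)$ by selecting $n$ Anosov quadratics $q_1,\ldots,q_n$ via Proposition \ref{existence}, with $q_i$ and $q_j$ sharing a splitting field whenever the pair $\{i,j\}$ lies in exactly one of $S_1,S_2$ (so the needed two-dimensional rational invariants exist inside $E_i\wedge E_j$). I would then define an integer matrix $A$ on $V_1 = \oplus E_i$ whose induced action on each $E_i\wedge E_j$ has the prescribed invariant subspace structure, and verify hyperbolicity: the surviving eigenvalues on $V_2(\frakn(S_1,S_2))$ have the form $\alpha_i^{\pm 1}\alpha_j^{\pm 1}$ for specific pairs, and a generic choice of the $\alpha_i$ (avoiding multiplicative identities like $\alpha_i\alpha_j=\pm 1$) prevents modulus-one values.

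The main obstacle is the converse: the naive block-diagonal action using the companion matrix of $q_i$ on each pair $(\bfz_{2i-1},\bfz_{2i})$ does not preserve $\myspan\{[\bfz_{2i-1},\bfz_{2j-1}],[\bfz_{2i},\bfz_{2j}]\}$ when $\{i,j\}\in S_1\setminus S_2$. To overcome this, one must either exhibit a Lie-algebra isomorphism replacing $\fraki(S_1,S_2)$ by an ideal that is actually $f$-invariant, or construct a more intricate integer matrix $A$ whose induced action on $E_i\wedge E_j$ respects the prescribed structure. Verifying that the resulting automorphism has no modulus-one eigenvalues on the quotient is the delicate step of the proof.
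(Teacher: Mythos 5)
Your reality argument and your forward (classification) direction track the paper's proof: decompose $V_1(\boldR)=\oplus_i E_i$ along the quadratic factors, observe that $r_i\wedge r_i=x-1$ forces $\oplus_i[E_i,E_i]$ into $\fraki$ by the fourth Auslander--Scheuneman condition, and analyze each block $[E_i,E_j]$ through the splitting fields: an irreducible quartic (hence a minimal block) when $\boldQ(\alpha_i)\neq\boldQ(\alpha_j)$, and two rational invariant planes when the fields coincide, which the paper makes explicit by writing $\alpha_i=\eta^{b_i}$ for a fundamental unit $\eta$ (it also notes the degenerate case $b_i=b_j$, where one factor is $(x-1)^2$ and that plane is forced into $\fraki$). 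One slip: your ``adapted change of rational basis in each $E_i$'' cannot work as stated. If $\bfw_1=a\bfz_{2i-1}+b\bfz_{2i}$ and $\bfw_3=c\bfz_{2j-1}+d\bfz_{2j}$, then $[\bfw_1,\bfw_3]$ lies in $\myspan\{[\bfz_{2i-1},\bfz_{2j-1}],[\bfz_{2i},\bfz_{2j}]\}$ only if $ad=bc=0$, which forces one of the vectors to be an eigenvector, and eigenvectors are irrational here since the $r_i$ are irreducible. The correct fix, which is what the paper does, is to note that Definition \ref{graph} only requires a real generating set, so one takes the $\bfz$'s to be the (real) eigenvectors themselves; then the two invariant planes are literally $E_{i,j}$ and $E'_{i,j}$ and $\frakn\cong\frakn(S_1,S_2)$.

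The converse is a genuine gap: you outline a plan and explicitly leave your ``main obstacle'' unresolved, so the proposal does not prove that every quadratic-type algebra is Anosov. The obstacle disappears once the rational-basis misconception above is removed, and the paper's construction settles everything at once: fix one real quadratic field with fundamental unit $\eta>1$, choose distinct positive integers $b_1,\ldots,b_n$, set $r_i(x)=(x-\eta^{b_i})(x-\eta^{-b_i})$ and $p_1=r_1\cdots r_n$, and let $f$ be induced by the block companion matrix of $p_1$. Because all $\alpha_i$ are powers of the same $\eta$, every block $[E_i,E_j]$ splits over $\boldQ$ into the two invariant planes $E_{i,j}$ and $E'_{i,j}$, with characteristic polynomials $(x-\eta^{b_i+b_j})(x-\eta^{-b_i-b_j})$ and $(x-\eta^{b_i-b_j})(x-\eta^{-b_i+b_j})$; hence for \emph{every} choice of $S_1,S_2$ the ideal $\oplus_i[E_i,E_i]\oplus\oplus_{\{i,j\}\in S_1}E_{i,j}\oplus\oplus_{\{i,j\}\in S_2}E'_{i,j}$ is rational (Theorem \ref{actions}) and $f$-invariant, unimodularity on it is automatic by Remark \ref{as cond 2}, and it is of the form $\fraki(S_1,S_2)$ with respect to the eigenvector Hall basis, so the quotient is $\frakn(S_1,S_2)$. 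The hyperbolicity check you flagged as delicate is immediate in this construction: the surviving eigenvalues on the quotient are $\eta^{\pm(b_i+b_j)}$ and $\eta^{\pm(b_i-b_j)}$ with $b_i\neq b_j$, none of modulus one. Your alternative plan (matching splitting fields only for pairs lying in exactly one of $S_1,S_2$) could likely be completed, but you would still have to produce the invariant rational planes and verify the four conditions; the single-field choice does this uniformly and avoids constructing any ``more intricate integer matrix.''
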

 
\begin{proof}  
 Let $\frakf_{2n,2}(\boldR) = V_1(\boldR) \oplus V_2(\boldR)$ be the free
two-step nilpotent Lie algebra on $2n$ generators.
Let $\overline{f}$ be an  an Anosov automorphism
of    $\frakn = \frakf_{2n,2}(\boldR)/\fraki$ defined by 
automorphism $f$ of $\frakf_{2n,2}(\boldR),$ a Hall basis $\calB$
and an ideal $\fraki$ satisfying the Auslander-Scheuneman conditions.  
 Without loss of generality,
assume that  $\fraki < V_2(\boldR).$ 
Let $(p_1,p_2)$ denote the pair of polynomials 
associated to $f,$ and assume that the   polynomial 
 $p_1$ of degree $2n$ is  the product 
of $n$ quadratic Anosov polynomials  $r_1, \ldots, r_n.$ 

 By the
quadratic equation, any roots of a quadratic Anosov polynomial are real.
All the eigenvalues of the Anosov automorphism $\overline{f}$
are Hall words in the roots of $p_1,$ hence are real.
 The subspace $V_1(\boldR)$ decomposes as the direct sum
$\oplus_{i=1}^n E_i$ of rational invariant subspaces such that
the characteristic polynomial for $f|_{E_i}$ is $r_i.$
For $i=1, \ldots, n,$  let $\bfz_{2i -1}$ and $\bfz_{2i}$  denote  eigenvectors
in $E_i$ with eigenvalues $\alpha_{2i -1}$ and $\alpha_{2i}$
respectively.  We may assume without loss of generality 
 that $\alpha_{2i -1} > 1 > \alpha_{2i} = \alpha_{2i-1}^{-1}.$ 
As in Example \ref{qr}, the polynomial  $p_2$ may be written as 
\[ p_2 = \prod_{i=1}^n (r_i \wedge r_i) \, \times  \prod_{1 \le i < j \le n}
(r_i \wedge r_j), \] 
and this factorization is corresponds to a decomposition
$V_2(\boldR) = \oplus_{1 \le i \le  j \le n}[E_i,E_j]$ of $V_2(\boldR)$ 
into rational invariant subspaces. 

 For all $i=1, \ldots, n,$ the  polynomial
 $r_i \wedge r_i$ is linear with 
 root $\alpha_{2i-1} \alpha_{2i} =  1,$ so
$(r_i \wedge r_i)(x) = x - 1.$  Therefore, the 
ideal $\fraki$ must contain the $n$-dimensional subspace
 $\oplus_{i=1}^n [E_i,E_i]$ of $V_2(\boldR).$ 
 For $i \ne j,$
the polynomial $r_i \wedge r_j$ is given by
\[ (r_i \wedge r_j)(x) = (x -\alpha_{2i-1}\alpha_{2j-1})
(x-\alpha_{2i-1}
\alpha_{2j-1}^{-1})(x-\alpha_{2i-1}^{-1}\alpha_{2j-1})
(x-\alpha_{2i-1}^{-1}\alpha_{2j-1}^{-1}).  \]
Minimal nontrivial invariant subspaces of 
 $[E_i,E_j]$ correspond to factorizations of $r_i \wedge r_j$
over $\boldZ.$

 If the splitting fields  $\boldQ(r_i)$ and $\boldQ(r_j)$
do not coincide, then they are  linearly disjoint, 
and $\boldQ(r_i \wedge r_j) = \boldQ(r_i)\boldQ(r_j)$
 is a biquadratic extension of $\boldQ.$
Therefore, $r_i \wedge r_j$ is irreducible, and $[E_i,E_j]$
has no nontrivial rational invariant subspaces.

Now suppose that the splitting fields
$\boldQ(r_i)$ and $\boldQ(r_j)$ are equal, for $i \ne j$.
Since the roots of Anosov quadratics are real, the field $\boldQ(r_i)$ is 
a totally real quadratic extension of $\boldQ.$ 
By Dirichlet's Fundamental  Theorem, there are units $\zeta = \pm 1,$
  and $\eta,$ a fundamental unit in $\boldQ(r_i)$, such 
that  any unit $\beta$ in $\boldQ(r_i)$ can be expressed
as  $\beta = \zeta^a \eta^{b},$
 where $a \in \{0, 1\}$ and $b \in \boldZ.$  
We may choose $\eta > 1.$
The Galois group for $r_1$ is generated by the automorphism of
$\boldQ(\eta)$ mapping $\eta$ to $\eta^{-1}.$

We can write 
\[ \alpha_{2i-1} =  \eta^{b_i},  \alpha_{2i} =   \eta^{-b_i},
\alpha_{2j-1} =   \eta^{b_j}, \text{and} \quad \alpha_{2j} =
 \eta^{-b_j}, \] where $b_i$ and $b_j$ are in $\boldZ^+.$
The four roots
of $ (r_i \wedge r_j)(x)$ are then the   numbers $\eta^{\pm b_i \pm  b_j}.$
Therefore, $r_i \wedge r_j$ factors over $\boldZ[x]$ as the product of
two  quadratics in $\boldZ[x]$
\begin{align*} 
 (x-\eta^{b_i + b_j})(x-\eta^{-b_i - b_j}) &= (x-\alpha_{2i-1}\alpha_{2j-1})
(x-\alpha_{2i}\alpha_{2j}), \quad \text{and} \\
(x-\eta^{b_i - b_j})(x-\eta^{-b_i + b_j}) &= (x-\alpha_{2i-1}\alpha_{2j-1}^{-1})(x-\alpha_{2i-1}^{-1}\alpha_{2j-1}^{-1}).
 \end{align*}
The first polynomial is irreducible since $b_i + b_j > 0,$ and $\eta$
is not a root of unity.  
If  $\alpha_{2i-1} = \alpha_{2j-1},$ 
then $b_i = b_j$ and the second polynomial is equal to  $(x-1)^2.$

This analysis shows that $\fraki \cap [E_i,E_j]$  
must be one of the subspaces $\{0\},$ $[E_i,E_j],$ 
\[ 
E_{i,j} = \myspan \{ [\bfz_{2i-1},\bfz_{2j-1}], [\bfz_{2i}, \bfz_{2j}]\} 
\quad \text{and} \quad 
E_{i,j}^\prime = \myspan  \{ [\bfz_{2i-1},\bfz_{2j}], [\bfz_{2i}, \bfz_{2j-1}]\}. \]
Then $\fraki$ is the direct sum of $\oplus_{i=1}^n [E_i,E_i],$
subspaces of the form $E_{i,j}$ and subspaces of the form 
$E_{i,j}^{\prime}.$  Therefore, $\frakn$ is of quadratic type.

Conversely, we show that 
for any ideal  $\fraki$ of quadratic type in $\frakf_{2n,2}(\boldR),$  
the quotient $\frakn= \frakf_{2n,2}(\boldR)/\fraki$ is an Anosov Lie algebra. 
Suppose that  $\fraki(S_1,S_2)$ is an ideal as in the definition
of quadratic type.  Fix a fundamental unit $\eta$ in a totally real
 quadratic extension of $\boldQ.$  Let $b_1, \ldots, b_n$ be 
distinct positive integers.
For each $i=1, \ldots, n,$
define the polynomial $r_i$ in $\boldZ[x]$ by
\[ r_i(x) = (x - \eta^{b_i})(x - \eta^{-b_i})\]
and let $p_1 = r_1 \cdots r_n.$  Then  for all $i \ne j,$
the polynomial 
$p_i \wedge p_j$ factors over 
$\boldZ$ as the product of pairs of irreducible quadratic polynomials 
\[ (x- \eta^{b_i + b_j})(x - \eta^{-b_i - b_j}) \quad \text{and} \quad 
 (x- \eta^{b_i - b_j})(x - \eta^{-b_i + b_j}) \]
in $\boldZ[x],$ neither having roots of modulus one.  
The two factors give a two rational invariant subspace of form $E_{ij}$
and $E_{ij}^\prime.$ 
 Therefore the ideal  
\[ \fraki = \bigoplus_{i=1}^n [E_i,E_i] \oplus 
\bigoplus_{\{i,j\} \in S_1} E_{i,j} \oplus
 \bigoplus_{\{i,j\} \in S_2} E_{i,j}^\prime\]
satisfies the four Auslander-Scheuneman conditions and is of the
form $\fraki(S_1,S_2)$ with respect to the appropriate Hall basis
of $\frakf_{2n,2}(\boldR).$
This completes the proof of  the theorem. 
\end{proof}

\subsection{When $p_1$ is a cubic}

We can classify all Anosov Lie algebras
of type $(3, \ldots, n_r)$ with $r=2$ or $r=3. $

\renewcommand{\arraystretch}{2}
\begin{table}
\begin{tabular}{|l | l | l | l |}
\hline 
$\frakn = \frakf_{n,r}(\boldR)/\fraki$ 
 & type & ideal $\fraki$ & reference for definition of $\fraki$ \\
\hline 
\hline 
$\frakf_{3,2}$  & $(3,3)$ & $\{0\}$ &\\
\hline
\hline 
$\frakf_{4,2}$  & $(4,6)$ & $\{0\}$ &  \\
\hline 
$\frakf_{4,2}/\fraki$ 
 & $(4,4)$ &  $\fraki(V_4, [\bfz_2,\bfz_1])$
 &  Definition \ref{ideal-i} \\
\hline
$\frakf_{4,2}/\fraki \cong \frakh_3 \oplus \frakh_3$ 
& $(4,2)$ & $\fraki(C_4, [\bfz_2,\bfz_1])$ & 
Definition \ref{ideal-i}\\
\hline 
\hline 
$\frakf_{5,2}$ & $(5,10)$ & $\{ 0 \}$  & \\
\hline 
$\frakf_{5,2}/\fraki$  & $(5,9)$ & $\fraki_1$ & Definition \ref{2+3-i}\\
\hline
$\frakf_{5,2}/\fraki$ & $(5,6)$ & $\fraki_1 \oplus \fraki_2$  &  Definition 
\ref{2+3-i} \\
\hline 
$\frakf_{5,2}/\fraki$ & $(5,5)$ & $\fraki(C_5, [\bfz_2,\bfz_1])$  &
Definition \ref{ideal-i} \\
\hline 
$\frakf_{5,2}/\fraki$ & $(5,5)$ & $\fraki_2$  &
Example \ref{D10} \\
\hline 
$\frakf_{5,2}/\fraki \cong \boldR^2 \oplus \frakf_{3,2}$
  & $(5,3)$ &  $\fraki_1 \oplus \fraki_3$
 &  Definition \ref{2+3-i}  \\
\hline 
\end{tabular}
\bigskip
\caption{\label{r=2}
Two-step Anosov Lie algebras of type $(n_1,n_2)$ with $n_1 \le 5$}
\end{table}

\begin{thm}\label{n1=3}
  If $\frakn$ is a two-step Anosov Lie algebra
of type $(3, n_2),$    then  $\frakn \cong \frakf_{3,2}.$ 
If $\frakn$ is three-step Anosov Lie algebra of type 
$(3, n_2, n_3),$   then 
  $\frakn$ is isomorphic to the Anosov Lie algebra 
$\frakf_{3,3}/F_2$ of type $(3,3,6),$ where 
 $\fraki = F_2$ is as defined in Equation
\eqref{F defs}, or
 $\fraki_1 = F_2 \oplus \fraki(C_3, [[\bfz_2, \bfz_1],\bfz_2])$
of type $(3,3,3),$
 where $\fraki(C_3, [[\bfz_2,\bfz_1],\bfz_2])$ is as defined in
Definition \ref{ideal-i}.   
\end{thm}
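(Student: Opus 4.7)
The plan is to show first that $p_1$ must be an irreducible cubic, then analyze what happens under each of the two possible Galois groups $C_3$ and $S_3$, using Theorems \ref{Sn} and \ref{Cn} together with the decomposition of $V_3$ from Proposition \ref{Fdecomp}.

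\textbf{Step 1: Reducing to the irreducible case.} Since $p_1 \in \boldZ[x]$ is monic of degree $3$ with constant term $\pm 1$, any linear factor $(x-a)$ with $a \in \boldZ$ would force $a = \pm 1$, contradicting the Anosov property. Hence $p_1$ is irreducible, and its Galois group $G$ is either $C_3$ or $S_3$. In either case, $G$ contains $C_3$, so the permutation representation on $\boldQ^3$ decomposes as the trivial representation plus an irreducible representation; by Proposition \ref{full rank}, the set of roots of $p_1$ has full rank.

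\textbf{Step 2: The two-step case.} When $G = S_3$, the action on the three roots is doubly transitive, so Theorem \ref{Sn}\,(1) gives $\frakn \cong \frakf_{3,2}$. When $G = C_3$, a direct check of the cyclic orbit of $[\bfz_2,\bfz_1] \in \calC_2$ shows it spans $V_2(K)$; by Theorem \ref{actions} the characteristic polynomial $p_2$ is a power of an irreducible, and by Lemma \ref{how it can factor}\,(2c) (using full rank and $n \ge 3$) it is in fact irreducible. Thus $V_2$ is a minimal nontrivial rational invariant subspace, so $\fraki \cap V_2 = \{0\}$ and $\frakn \cong \frakf_{3,2}$.

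\textbf{Step 3: The three-step case, preliminary reductions.} By Proposition \ref{Fdecomp}, $V_3 = F_1 \oplus F_2$ and $F_2 = \frakj_{3,3}$ has dimension $2$, with $F_{2a}$, $F_{2b}$ each one-dimensional and eigenvalues $\alpha_1\alpha_2\alpha_3 = \pm 1$. By Remark \ref{j in i}, the ideal $\fraki$ must contain $\frakj_{3,3} = F_2$. Also, the step-two analysis in Step 2 shows $\fraki \cap V_2 = \{0\}$ in both Galois cases, so the only freedom is the intersection of $\fraki$ with $F_1$. Since the quotient is three-step, $\fraki \cap F_1 \ne F_1$.

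\textbf{Step 4: The $S_3$ case, $r=3$.} The action of $S_3$ on the six elements of $\calC_3'$ is transitive, so $F_1$ is a single $G$-orbit; by Theorem \ref{actions} the characteristic polynomial $q_1$ of $f|_{F_1}$ is a power of an irreducible, and by Lemma \ref{how it can factor}\,(2c) it is irreducible of degree $6$. Hence $F_1$ is itself minimal rational invariant, forcing $\fraki \cap F_1 = \{0\}$, i.e.\ $\fraki = F_2$. This yields $\frakn \cong \frakf_{3,3}/F_2$ of type $(3,3,6)$.

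\textbf{Step 5: The $C_3$ case, $r=3$.} Write $\sigma$ for the generator of $C_3$ cyclically permuting $\bfz_1, \bfz_2, \bfz_3$. A direct computation shows that $F_1$ splits as a direct sum of two three-dimensional $C_3$-orbits,
$\fraki(C_3,[[\bfz_2,\bfz_1],\bfz_2])$ and $\fraki(C_3,[[\bfz_2,\bfz_1],\bfz_1])$,
with respective characteristic polynomials whose roots are $\{\alpha_i\alpha_{i+1}^2\}$ and $\{\alpha_i^2\alpha_{i+1}\}$ (indices mod $3$). Neither polynomial has a root $\pm 1$, for such a relation would contradict the full rank of the root set; hence by Theorem \ref{actions} each is irreducible of degree $3$, and each orbit is a minimal rational invariant subspace. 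By Theorem \ref{Cn}, $\fraki$ is a direct sum of $F_2$ with a subcollection of these two orbits. The option of including both is excluded (it gives $\fraki = V_3$, a two-step quotient). Including neither gives $\frakn \cong \frakf_{3,3}/F_2$. Including exactly one gives a Lie algebra of type $(3,3,3)$, and the transposition $\bfz_1 \leftrightarrow \bfz_2$ — a Lie algebra automorphism of $\frakf_{3,3}$ — interchanges the two orbits (since $[[\bfz_2,\bfz_1],\bfz_2] \mapsto -[[\bfz_2,\bfz_1],\bfz_1]$), so both choices produce the same Lie algebra up to isomorphism, namely $\frakf_{3,3}/\fraki_1$ with $\fraki_1 = F_2 \oplus \fraki(C_3,[[\bfz_2,\bfz_1],\bfz_2])$. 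Combining cases proves the theorem.

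The main obstacle will be Step 5, specifically checking that the two $C_3$-orbit choices yield isomorphic quotients and confirming that the irreducible cubics arising from these orbits have no roots of modulus one; both rely crucially on the full rank property established in Step 1.
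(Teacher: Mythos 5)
Your proposal is correct and follows essentially the same route as the paper: reduce to an irreducible cubic $p_1$, split into the Galois cases $C_3$ and $S_3$, and combine Proposition \ref{Fdecomp}, Remark \ref{j in i}, the full-rank/irreducibility lemmas, and Theorems \ref{Sn} and \ref{Cn} to pin down the admissible ideals. You merely carry out more of the orbit computations explicitly (including the transposition $\bfz_1\leftrightarrow\bfz_2$ identifying the two cyclic-orbit quotients of type $(3,3,3)$, a point the paper leaves implicit), while the paper handles the two-step case by citing Example \ref{free-3,2} and realizability via Table \ref{low-degree-list} and Lemma \ref{odd-good}.
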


\begin{proof}
 Suppose that $A$ is a semisimple hyperbolic 
matrix in $GL_n(\boldZ)$ with associated triple of polynomials 
 $(p_1,  p_2, p_3).$ Let $\calB_1$ be a generating set for 
$\frakf_{3,3}$ and let $\calB$ be the Hall basis determined by
$\calB.$  Let
$\alpha_1, \alpha_2,  \alpha_3$ denote the  roots of $p_1.$
  Since
$p_1$ can not have 1 or -1 as a root, it is irreducible over
$\boldZ,$ and the Galois group $G$ of
the splitting field of $p_1$ is either $C_3$ or $S_3.$  

As demonstrated in Example \ref{free-3,2}, the cubic polynomial 
$p_2$ is irreducible and Anosov, 
so there are no Anosov Lie algebras of type $(3,n_2)$
other than $\frakf_{3,2}.$

Let $\fraki$ be an ideal so that $f$ descends to an Anosov
automorphism of a three-step nilpotent Lie algebra $\frakf_{3,2}/\fraki.$ 
If $G$ is symmetric, then $\fraki = F_2$ by Theorem \ref{Sn}.
 If $G$ is cyclic, 
by Theorem \ref{Cn},  $\fraki$ is either $F_2$ or it is
an ideal of form
 $F_2 \oplus \fraki(C_3, \bfw),$ for $\bfw \in \calC_3^{\prime}.$

Each Lie algebra that is listed may be realized
by choosing appropriate Anosov polynomial from Table \ref{low-degree-list}
and using its companion matrix $A$ to define an automorphism of
$\frakf_{3,2}$ or $\frakf_{3,3}.$ When $r=3,$ by Lemma \ref{odd-good}, all 
vectors with eigenvalue $\pm 1$ will be in the kernel $F_2 = \frakj_{3,3}.$
\end{proof}

\subsection{When $p_1$ is a quartic}
Now we consider the case that $p_1$ is a quartic Anosov polynomial.
The next lemma is useful for understanding Anosov 
Lie algebras of type $(4,n_2).$  

\begin{lemma}\label{4-galois} Let $(p_1, p_2)$ be the
pair of polynomials associated to an irreducible 
Anosov polynomial $p_1$ of degree four.  
Let $G$ denote the Galois group of the splitting field for
$p_1.$   Then
\begin{enumerate}
\item{$G\cong S_4$ or $G \cong A_4$ if and only if $ p_2$ is irreducible.}
\item{$G \cong C_4$ or $G \cong D_8$ if and only if $ p_2$ has an irreducible
quartic factor. }
\item{$G \cong V_4$  if and only if $ p_2$ has  no  irreducible factors of
degree three or more.}
\end{enumerate}
Furthermore,  roots of $p_2$ come in reciprocal pairs $\beta$ and 
$\pm \beta^{-1}.$
\end{lemma}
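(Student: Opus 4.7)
The plan is to read off the factorization of $p_2$ directly from the action of $G$ on the six two-element subsets $\{i,j\} \subset \{1,2,3,4\}$. Since $p_1$ is irreducible of degree $4$, the Galois group $G$ is one of the five transitive subgroups of $S_4$, namely $S_4, A_4, D_8, C_4,$ and $V_4$. By Proposition \ref{p-properties}, the six roots of $p_2$ are precisely the products $\alpha_i \alpha_j$ with $1 \le i < j \le 4$, and the Galois action of $G$ on these products is the natural action on pairs; hence the irreducible factorization of $p_2$ over $\boldZ$ is determined by the $G$-orbits on the six pairs.

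The first step is therefore to enumerate these orbits. The groups $S_4$ and $A_4$ are both $2$-transitive on $\{1,2,3,4\}$, hence transitive on the six pairs, producing a single orbit and forcing $p_2$ to be irreducible of degree $6$. Both $C_4 = \langle (1234)\rangle$ and $D_8 = \langle (1234),(13)\rangle$ admit one orbit of size $4$, namely $\{\{1,2\},\{2,3\},\{3,4\},\{1,4\}\}$, and one orbit of size $2$, namely $\{\{1,3\},\{2,4\}\}$; the corresponding factorization of $p_2$ over $\boldZ$ is then an irreducible quartic times an irreducible quadratic. Finally $V_4 = \{e,(12)(34),(13)(24),(14)(23)\}$ splits the six pairs into three orbits of size $2$, namely $\{\{1,2\},\{3,4\}\}$, $\{\{1,3\},\{2,4\}\}$, $\{\{1,4\},\{2,3\}\}$, giving $p_2$ as a product of three quadratics with no irreducible factor of degree $3$ or more. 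Since these five orbit patterns are pairwise incompatible, each factorization pattern characterizes its group up to the unavoidable conflation $\{S_4,A_4\}$ and $\{C_4,D_8\}$, yielding the three claimed equivalences.

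For the final assertion I would use that $p_1$ is Anosov, so $\alpha_1\alpha_2\alpha_3\alpha_4 = \pm 1$; consequently $(\alpha_i\alpha_j)(\alpha_k\alpha_l) = \pm 1$ whenever $\{i,j,k,l\} = \{1,2,3,4\}$, and the six roots of $p_2$ therefore pair off into three reciprocal pairs $\beta, \pm \beta^{-1}$.

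The main technical subtlety I anticipate is that $p_2$ need not be squarefree: two products $\alpha_i\alpha_j$ coincide exactly when $\alpha_i\alpha_j = \alpha_k\alpha_l$ for complementary pairs, which forces $(\alpha_i\alpha_j)^2 = \pm 1$. Even in this degenerate situation, the minimal polynomial over $\boldQ$ of any root $\beta$ of $p_2$ has root set equal to the $G$-orbit of $\beta$, so the orbit enumeration above still governs which irreducible factors appear, and the conclusions about the degrees of those factors remain valid.
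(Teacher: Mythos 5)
Your reduction of the factorization of $p_2$ to the $G$-orbits on the six pairs $\{i,j\}$ is the same mechanism that drives the paper's proof (there the three cases are organized via the resolvent cubic with roots $\beta_1,\beta_2,\beta_3$, and Theorem \ref{actions} supplies the orbit-by-orbit factorization), but there is a genuine gap at exactly the point you wave off in your last paragraph. What equivariance gives for an orbit $O$ of pairs is only that $\prod_{\{i,j\}\in O}(x-\alpha_i\alpha_j)$ is a \emph{power} $q^s$ of an irreducible $q$; its degree equals $|O|$ only if the products $\alpha_i\alpha_j$ for $\{i,j\}\in O$ are pairwise distinct. Coincidences between complementary pairs, $\alpha_i\alpha_j=\alpha_k\alpha_l$ with $\{i,j,k,l\}=\{1,2,3,4\}$, force $(\alpha_i\alpha_j)^2=\pm1$, so the common value is $\pm1$ or $\pm i$; the value $\pm i$ is compatible both with the distinctness of the roots of $p_1$ and with $p_1$ being Anosov, since only $p_1$, not $p_2$, is assumed to avoid the unit circle. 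Hence your closing claim that ``the conclusions about the degrees of those factors remain valid'' is precisely the unproved step, and it is false as stated.

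Concretely, $p_1(x)=x^4+x^2-1$ is an irreducible Anosov quartic with Galois group $D_8$: its roots are $\pm a$ and $\pm i/a$ with $a^2=(\sqrt{5}-1)/2$, the six pairwise products are $-a^2,\ a^{-2},\ i,\ i,\ -i,\ -i$, and $p_2(x)=(x^2-x-1)(x^2+1)^2$. The size-four orbit of pairs collapses onto the two values $\pm i$ and contributes $(x^2+1)^2$ rather than an irreducible quartic, so orbit size does not control the degree of the irreducible factor. Your argument can be completed for item (1): a coincidence would make $\alpha_i\alpha_j+\alpha_k\alpha_l$ of degree at most two over $\boldQ$, contradicting irreducibility of the resolvent cubic when $G\cong S_4$ or $A_4$, so $p_2$ is genuinely irreducible there (this is in effect what the paper does via Theorem \ref{Sn}); likewise a coincidence with common value $\pm1$ makes two resolvent roots rational and forces $G\cong V_4$. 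But the $\pm i$ coincidence survives all of these observations --- it contradicts neither the distinctness of the roots of $p_1$ nor $\beta_2,\beta_3\notin\boldQ$ --- and the example above shows the forward implication of item (2) (equivalently the converse half of item (3)) cannot be obtained by your orbit count alone; one must either add a hypothesis (no product $\alpha_i\alpha_j$ is a root of unity) or weaken the conclusion (in the $C_4$/$D_8$ case the degree-four orbit contributes either an irreducible quartic or the factor $(x^2+1)^2$). So the degenerate case is not a removable technicality but the crux, and as written your proof does not establish items (2) and (3).
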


\begin{proof} Let  $\alpha_1, \alpha_2, \alpha_3, \alpha_4$ denote the
four distinct roots of $p_1.$  Then the roots of $ p_2$ are the six
numbers $\alpha_i \alpha_j,$ where $1 \le i< j \le 4.$    
Because $\alpha_1\alpha_2\alpha_3\alpha_4 =  \pm 1,$
  roots of $p_2$ come in pairs such as $\alpha_1\alpha_2$
and $\alpha_3\alpha_4 =  \pm (\alpha_1\alpha_2)^{-1}.$ 

The resolvent cubic $r$ for $p_1$ has roots 
\[ \beta_{1}=\alpha_1 \alpha_2 + \alpha_3 \alpha_4, \quad \beta_2= \alpha_1
\alpha_3 + \alpha_2 \alpha_4, \quad \text{and}  \quad\beta_3 = \alpha_1
\alpha_4 + \alpha_2 \alpha_3.\] 
Recall that one of three things must occur: 
(1) that none of $\beta_1,\beta_2,\beta_3$ lies in
$\boldQ,$  $r$ is irreducible, and $G\cong S_4$ or $G \cong A_4,$ 
(2)  exactly  one
of $\beta_1,\beta_2,\beta_3$ lies in $\boldQ,$  $r$ is the product of
an irreducible quadratic and a linear factor, and $G\cong C_4$ or $G \cong D_8,$
and  (3)   $\beta_1,\beta_2,\beta_3$ all lie in $\boldQ,$   $r$
splits over $\boldQ,$ and $G\cong V_4.$ 

Since $\alpha_1\alpha_2\alpha_3\alpha_4 = \pm 1,$ 
 $\beta_1 = \alpha_1 \alpha_2 + \alpha_3 \alpha_4$ is in $\boldQ$ 
if and only if
$(x - \alpha_1 \alpha_2)(x - \alpha_3 \alpha_4)$ is a quadratic factor
of $p_2$ over $\boldQ.$   
 Factors that are the counterparts from $\beta_2$ and $\beta_3,$ 
\begin{equation}\label{factorsabove}
 (x - \alpha_1 \alpha_3)(x - \alpha_2 \alpha_4) \qquad \text{and} \qquad
 (x - \alpha_1 \alpha_4)(x - \alpha_2 \alpha_3),
\end{equation}
are  in $\boldQ[x]$ depending on whether $\beta_1$ and $\beta_2$
are in $\boldQ.$
Therefore, when all of $\beta_1, \beta_2, \beta_3$ lie in
$\boldQ,$ $p_2$ factors as the product of quadratics in
$\boldQ[x],$ establishing the claim in Case (3).

In Case (1),  $p_2$ is irreducible by Theorem \ref{Sn}, Part \eqref{2,2}.

  In the second case,  when $G$ is $C_4$ or $D_8,$  there is a  $G$ orbit
of cardinality four, something like
  $\{\alpha_1\alpha_3, \alpha_1\alpha_4,\alpha_2\alpha_3,
\alpha_2\alpha_4\},$ that by Theorem \ref{actions} yields
a factor
\[ r(x)=(x - \alpha_1 \alpha_3)(x - \alpha_2 \alpha_4)(x - \alpha_1
\alpha_4) (x - \alpha_2 \alpha_3) \]
of $p_2,$
and an orbit of cardinality two 
corresponding to a factor 
$(x - \alpha_1 \alpha_2)(x - \alpha_3 \alpha_4)$ of $p_2.$  
Then $\beta_1 \in \boldQ$ and  $\beta_2, \beta_3 \not \in \boldQ.$
   By Theorem \ref{actions},
if $r$ were to factor, it would be a power of an irreducible.  
Knowing that  $\beta_2, \beta_3 \not \in \boldQ$ rules out
the polynomials in Equation \eqref{factorsabove} as factors of $r.$
The only other
options for factors yield contradictions to the distinctness of roots of
$p_1.$  Thus, $r$ is an irreducible quartic factor of $p_2.$
\end{proof}

Now we are ready to
 classify two-step Anosov Lie algebras of type $(4,n_2)$ using
the methods we have established.  
\begin{thm}\label{n=4,r=2}  
If $\frakn$  is an Anosov Lie algebra  of type $(4, n_2),$ 
 then  $\frakn$ is one of  the Anosov Lie algebras listed in Table \ref{r=2}.   
\end{thm}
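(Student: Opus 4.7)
The plan is to classify Anosov Lie algebras $\frakn = \frakf_{4,2}(\boldR)/\fraki$ of type $(4, n_2)$ by splitting according to the nature of the polynomial $p_1$ associated to a semisimple automorphism $f$ inducing the Anosov map on $\frakn$. Since $\pm 1$ both have modulus one, an Anosov polynomial of degree four admits no linear factor, so either $p_1$ is the product of two irreducible Anosov quadratics, or $p_1$ is irreducible of degree four.

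In the reducible case, Theorem \ref{2+2+...} applies directly: $\frakn$ is of quadratic type $\frakn(S_1, S_2)$ where $S_1, S_2 \subseteq \{\{1,2\}\}$, since there are only two quadratic factors. The four choices of $(S_1, S_2)$ produce ideals $\fraki(S_1, S_2) \subset V_2(\boldR)$ of dimensions $2, 4, 4, 6$. The last case yields an abelian quotient, which fails to be $2$-step and is excluded. The first case gives one Lie algebra of type $(4,4)$, while the two middle cases produce algebras of type $(4,2)$ that are isomorphic by swapping the two generators inside one quadratic block. I will then verify these are isomorphic to $\frakf_{4,2}/\fraki(V_4, [\bfz_2,\bfz_1])$ and $\frakh_3 \oplus \frakh_3$ respectively by matching the orbit structure of the appropriate Hall bases.

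In the irreducible case, Lemma \ref{4-galois} determines the factorization of $p_2$ in terms of the Galois group $G$ of $p_1$. When $G \in \{S_4, A_4\}$, the polynomial $p_2$ is irreducible, $G$ acts doubly transitively on the roots of $p_1$, and Proposition \ref{full rank} together with Lemma \ref{how it can factor}\eqref{fr-p2-irr} force the roots of $p_1$ to be of full rank and $p_2$ itself to be Anosov; thus $V_2(\boldR)$ is the unique minimal nontrivial rational invariant subspace, forcing $\fraki = \{0\}$ and $\frakn = \frakf_{4,2}$. When $G \in \{C_4, D_8\}$, we have $p_2 = r_4 \cdot r_2$ with $r_4$ an irreducible quartic and $r_2$ quadratic, yielding a decomposition $V_2(\boldR) = E_4 \oplus E_2$ by Theorem \ref{actions}; after passing to $f^2$ via Remark \ref{f^2} to arrange $\prod_i \alpha_i = 1$, the factor $r_2$ takes the self-reciprocal form $x^2 - cx + 1$ and either has two reciprocal real roots or a pair of modulus-one roots, and I analyze each sub-case to pin down the admissible $\fraki$. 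When $G = V_4$, the polynomial $p_2$ splits as a product of three irreducible self-reciprocal quadratics, and a parallel (but more tedious) analysis recovers only the three isomorphism classes in the table.

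The main obstacle will be the identification step across the different Galois-group cases: showing that ideals arising from quite different Galois actions and eigenvalue patterns in fact produce Lie algebras isomorphic to one of the three entries $\frakf_{4,2}$, $\frakf_{4,2}/\fraki(V_4, [\bfz_2,\bfz_1])$, or $\frakh_3 \oplus \frakh_3$ from Table \ref{r=2}. Concretely this amounts to exhibiting explicit Lie algebra isomorphisms between quotients built from different rational structures, which requires choosing the right basis in each sub-case and verifying that the bracket relations coincide. Once the matching is carried out in each sub-case of $G$ together with the reducible case, no further ideals remain, and the theorem follows.
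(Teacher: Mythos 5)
Your proposal is correct and follows essentially the same route as the paper: split on whether $p_1$ is a product of quadratics (disposed of by Theorem \ref{2+2+...}) or irreducible, and in the irreducible case use Lemma \ref{4-galois} together with Theorem \ref{actions} to run through the Galois groups $S_4/A_4$, $C_4/D_8$, and $V_4$, identifying the resulting quotients with the entries of Table \ref{r=2}. The only caveat is that in the $V_4$ case the three quadratic factors of $p_2$ need not all be irreducible (they can be $(x\pm 1)^2$ or have modulus-one roots), but the real-reciprocal versus modulus-one dichotomy you set up for the $C_4/D_8$ case absorbs this, exactly as in the paper's treatment.
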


\begin{proof}
Suppose that $f$ is a semisimple automorphism of $\frakf_{4,2}(\boldR) 
= V_1(\boldR) \oplus V_2(\boldR)$ 
that projects to an Anosov automorphism of
 a two-step quotient $\frakn = \frakf_{4,2}/\fraki.$ 
 Let $(p_1,p_2)$ be the polynomials associated to 
$f,$ and let $K$ be the splitting field
for $p_1.$   Let $\alpha_1, \alpha_2, \alpha_3, \alpha_4$ be the roots
of $p_1$ and let $\bfz_1, \bfz_2, \bfz_3, \bfz_4$
be corresponding eigenvectors for $f_K$ in $V_1(K).$

If $p_1$ is reducible, since it is Anosov, it is a product of
quadratics, and Theorem \ref{2+2+...} describes the Anosov Lie
algebras in that situation: $\fraki$ is one of $\fraki(V_4,[\bfz_2,\bfz_1])$
and $\fraki(C_4,[\bfz_2,\bfz_1]).$ 

Assume that $p_1$ is
irreducible.
The Galois group $G$ of $p_1$ is then a transitive
permutation group of degree  four.  

By Lemma \ref{4-galois}, if $G$ is $S_4$ or $A_4,$ the polynomial
$p_2$ is irreducible, so $V_2(\boldR)$ has no nontrivial 
proper rational invariant subspaces; hence, 
 $\fraki = \{0\},$ and $\frakn$ is free. 
If $G$ is $D_8$ or $C_4,$ then by considering the
action of $G$ on the set of roots it can be seen 
that $\frakf_{4,2}(K)$ is the direct sum of a 
four-dimensional rational $f_K$-invariant 
subspace $\fraki_1$ and a two-dimensional rational $f_K$-invariant  subspace
 $\fraki_2.$  If the roots are real, these may be represented
as follows,  renumbering the roots if necessary, 
\[
\fraki_1 = \fraki(C_4,[\bfz_2,\bfz_1]) =  \fraki(D_8,[\bfz_2,\bfz_1]), \quad \text{and} \quad
\fraki_2 = \fraki(C_4,[\bfz_3,\bfz_1]) = \fraki(D_8,[\bfz_3,\bfz_1]). \]
By Lemma  \ref{4-galois}, the minimal polynomial for 
$\fraki_1$ is irreducible, so $\fraki$ is 
a minimal nontrivial invariant subspace.  If there are complex roots,
a short computation shows that the rational invariant 
subspaces $\fraki_1^\prime$ and $\fraki_2^\prime$ of $\frakf_{4,2}(K)$
yield rational invariant subspaces $(\fraki_1^\prime)^\boldR$ and 
 $(\fraki_2^\prime)^\boldR$ of $\frakf_{4,2}(\boldR)$ isomorphic 
to $\fraki_1$ and $\fraki_2.$
The characteristic polynomial for 
$\fraki_2$ is either irreducible or it has roots 
$\pm 1,$ in which case $\fraki_2$ must be
contained in the ideal $\fraki.$  Thus, when $G = C_4$
or $D_8,$  the ideal $\fraki$ is  $\{0\}, \fraki_1$
or $\fraki_2.$

Finally, if  $G$ is the Klein four-group,  by the same reasoning, 
$V_2(K)$ is the direct sum of three two-dimensional ideals that are
either  minimal or must be contained in $\fraki.$  Then $\frakn$
is of quadratic type.  

Anosov automorphisms of all types $(4,n_2)$  may be realized
by choosing an appropriate polynomial $p_1$ from Table \ref{low-degree-list}.
By Remark \ref{roots of modulus one}, the  polynomial $p_2$ defined by such
a $p_1$ can not have any nonreal roots of modulus one unless it is 
self-reciprocal.  
The only polynomials listed in Table \ref{low-degree-list}  that is 
self-reciprocal is for $V_4,$ so 
 $p_2$ will not have roots of modulus one unless $G = V_4,$ in which case
the eigenspaces of those roots lie in the ideal $\fraki.$ 
\end{proof}

\subsection{When $p_1$ is a quintic}

First we define some two-step nilpotent 
Lie algebras on five generators.
\begin{definition}\label{2+3-i}
Let $\frakf_{5,2} = V_1(\boldR) \oplus V_2(\boldR)$ be a free Lie algebra on five generators
$\{ \bfz_i \}_{i=1}^5.$  Define subspaces $E_1$ and $E_2$ of 
$V_1(\boldR)$ by  $E_1 = \myspan_{\boldR} \{ \bfz_1, \bfz_2 \}$
and let $E_2 = \myspan_{\boldR} \{\bfz_3, \bfz_4, \bfz_5\},$
and define  ideals of  $\frakf_{5,2}$  by 
\[
\fraki_1 =  [E_1,E_1], \quad
\fraki_2 =  [E_1,E_2], \quad \text{and} \quad
\fraki_3 =  [E_2,E_2]. \]
Define two-step  Lie algebras by
\[
\frakn_1 = \frakf_{5,2}/ \fraki_1, \quad
\frakn_2 = \frakf_{5,2}/ (\fraki_1 \oplus \fraki_2), \quad  \text{and} \quad
\frakn_3 = \frakf_{5,2}/ (\fraki_1 \oplus \fraki_3).
\]
These are 
of types $(5,9)$, $(5,3)$ and $(5,6)$ respectively.  Note that
$\frakn_2 \cong \boldR^2 \oplus \frakf_{3,2}.$ 
\end{definition}

\begin{thm}\label{n1=5}
  Suppose that $\frakn$ is a  two-step 
nilpotent Lie algebra 
of type $(5, n_2)$ admitting an Anosov automorphism $f.$ 
 Then $\frakn$ is one of the Lie algebras listed in Table \ref{r=2}.
  Furthermore, all of the Lie algebras of 
type  $(5, n_2)$ in Table  \ref{r=2} are Anosov.
\end{thm}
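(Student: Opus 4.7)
The plan is to adapt the strategy used in Theorems \ref{n1=3} and \ref{n=4,r=2} to the quintic setting. Given a semisimple Anosov automorphism $\overline{f}$ of $\frakn$, lift it via the Auslander-Scheuneman correspondence to an automorphism $f$ of $\frakf_{5,2}(\boldR)$ together with an ideal $\fraki < V_2(\boldR)$, and let $(p_1, p_2)$ be the associated pair of polynomials. Since $p_1$ is Anosov of odd degree $5$ and can have no linear factor (a root $\pm 1$ would have modulus one), either (i) $p_1$ is irreducible of degree $5$, or (ii) $p_1 = q \cdot r$ with $q$ an irreducible quadratic and $r$ an irreducible cubic, both Anosov.

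In Case (i), the Galois group $G$ of $p_1$ is one of the transitive subgroups $C_5, D_{10}, F_{20}, A_5, S_5$ of $S_5$. When $G \in \{F_{20}, A_5, S_5\}$ the action on roots is doubly transitive, so Part \eqref{2,2} of Theorem \ref{Sn} gives $\frakn \cong \frakf_{5,2}(\boldR)$. When $G \cong D_{10}$ the analysis is already carried out in Example \ref{D10} and produces, together with the free quotient, two nonisomorphic $(5,5)$-type quotients $\frakf_{5,2}(\boldR)/\fraki_1$ and $\frakf_{5,2}(\boldR)/\fraki_2$. When $G \cong C_5$ Theorem \ref{Cn} applies and, up to relabeling of the generators, yields the cyclic-type $(5,5)$-quotient $\frakf_{5,2}(\boldR)/\fraki(C_5,[\bfz_2,\bfz_1])$.

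In Case (ii), the factorization induces a rational $f$-invariant splitting $V_1(\boldR) = E_1 \oplus E_2$ with $\dim E_1 = 2$ and $\dim E_2 = 3$, and as in Example \ref{qr} a rational invariant decomposition $V_2(\boldR) = [E_1,E_1] \oplus [E_1,E_2] \oplus [E_2,E_2]$ of dimensions $1, 6, 3$. The unique eigenvalue on $[E_1,E_1]$ is the constant term $\alpha_1 \alpha_2 = \pm 1$ of $q$, forcing $\fraki_1 := [E_1,E_1] \subseteq \fraki$. The three eigenvalues $\beta_i \beta_j$ of $f$ on $[E_2,E_2]$ satisfy $\beta_i \beta_j = \pm \beta_k^{-1}$ by the constant-term identity $\beta_1\beta_2\beta_3 = \pm 1$, so the characteristic polynomial $r \wedge r$ is irreducible Anosov, making $\fraki_3 := [E_2,E_2]$ a minimal nontrivial rational invariant subspace. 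For $[E_1,E_2]$ I would argue that the six eigenvalues $\alpha_i\beta_j$ form a single orbit under $\Gal(p_1)$ by a short case analysis of the possible embeddings of $\Gal(p_1)$ into $C_2 \times \Gal(r)$, and that the reciprocal $\pm \alpha_{i'}\beta_k\beta_l$ of any such eigenvalue is never of the form $\alpha_p\beta_q$, so no irreducible factor of $q \wedge r$ is self-reciprocal; by Remark \ref{roots of modulus one}, $q \wedge r$ then has no root of modulus one, and $\fraki_2 := [E_1,E_2]$ is a minimal nontrivial rational invariant subspace. Hence the admissible nonabelian possibilities for $\fraki$ in Case (ii) are precisely $\fraki_1$, $\fraki_1 \oplus \fraki_2$, and $\fraki_1 \oplus \fraki_3$, giving types $(5,9)$, $(5,3)$, and $(5,6)$ respectively.

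For the converse, each Lie algebra listed in Table \ref{r=2} of type $(5,n_2)$ is realized by choosing a specific polynomial from Table \ref{low-degree-list}: an irreducible quintic of the relevant Galois group for the Case (i) quotients, and a product of an Anosov quadratic such as $x^2 - x - 1$ with an Anosov cubic for the Case (ii) quotients; one verifies the Auslander-Scheuneman conditions via Theorem \ref{actions}, Remark \ref{as cond 2}, and Remark \ref{j in i}. The main obstacle is the Case (ii) analysis, in particular the verification that $q \wedge r$ is irreducible and has no modulus-one root across all possible interactions of the splitting fields of $q$ and $r$ — the nontrivial subcase being when $\Gal(r) \cong S_3$ and $\boldQ(q)$ coincides with the unique quadratic subfield of the splitting field of $r$, where $\Gal(p_1)$ sits diagonally rather than as a product.
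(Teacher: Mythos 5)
Your proposal follows essentially the same route as the paper: the same split into $p_1$ irreducible (handled via Theorem \ref{Sn} for the doubly transitive groups, Example \ref{D10} for $D_{10}$, and Theorem \ref{Cn} for $C_5$) versus $p_1$ a product of an Anosov quadratic and cubic, the same rational invariant decomposition $V_2(\boldR) = [E_1,E_1] \oplus [E_1,E_2] \oplus [E_2,E_2]$ forcing $[E_1,E_1] \subset \fraki$ and yielding exactly the ideals $\fraki_1,$ $\fraki_1 \oplus \fraki_2,$ $\fraki_1 \oplus \fraki_3,$ and the same realization of all listed algebras from Table \ref{low-degree-list}. The only minor variation is the sub-step for $[E_1,E_2]$: the paper deduces irreducibility of $r_1 \wedge r_2$ from $[\boldQ(\alpha_1\alpha_3):\boldQ] = 6$ and rules out modulus-one roots by choosing the cubic totally real, whereas you use a Galois-orbit and non-self-reciprocity argument, which also works.
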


\begin{proof} 
Let $(p_1,p_2)$ be the pair of polynomials associated to an 
automorphism $f$ of $\frakf_{5,2} = V_1(\boldR) \oplus V_2(\boldR)$ that projects
to an Anosov automorphism of an Anosov Lie algebra  
$\frakn = \frakf_{5,2}/\fraki,$  
for some ideal $\fraki$ of $\frakf_{5,2}$ satisfying the
Auslander-Scheuneman conditions.   
 Without loss of generality we assume that the
roots of $p_1$ have product 1.  

First suppose that $p_1$ is irreducible, so that its Galois group $G$ is
isomorphic to $S_5, A_5, D_{10}, C_{5}$ or the holomorph $\Hol(C_5)$ of $C_5.$
If $G$ is isomorphic to one of 
$S_5, A_5$, and $\Hol(C_5),$ then the action of 
$G$ on the roots of $p_1$ is two-transitive, so
  by Theorem \ref{Sn}, 
$\frakn$ is isomorphic to  $\frakf_{5,2}.$  
The case that the Galois group is $D_{10}$ was considered in Example
\ref{D10}, where it was found that either $\frakn$ is free, 
 $\frakn$ is isomorphic to $\frakn_1$ or $\frakn_2$ in 
Example \ref{D10}, both of type
$(5,5).$  The  case of $C_5$ is covered by Theorem \ref{Cn}.
Thus, in all possible cases, $\frakn$ is one of the
Lie algebras listed in Table \ref{r=2}.   Each example may be realized
by choosing a polynomial $p_1$ from
 Table \ref{low-degree-list}, and using its companion matrix to 
define an automorphism of $\frakf_{5,2}.$ 
To get $\frakn_1,$ one needs to choose $p_1$ with all real
roots, and to get $\frakn_2,$ one needs $p_2$ to have 
four nonreal roots.  Examples of both kinds are in the table.
The associated polynomials
 $p_2$ have no roots of modulus one by Lemma \ref{odd-good}.

Now suppose that the Anosov polynomial $p_1$ is the product of a 
quadratic Anosov polynomial $r_1$
and a cubic Anosov polynomial $r_2$.  Let $E_1$ and $E_2$
 denote the rational invariant subspaces of $V_1(\boldR)$ corresponding
to $r_1$ and $r_2$ respectively.  
Because $r_1$ is quadratic, 
$ r_1 \wedge r_1  = x \pm 1,$ so $\fraki_1 = [E_1,E_1]$ must be
contained in $\fraki.$  As seen in Example \ref{free-3,2}, since
 $r_2$ is a cubic,  the polynomial 
$r_2 \wedge r_2 $ is irreducible and Anosov.
Therefore, the set $\fraki_3 = [E_2,E_2]$ is a minimal nontrivial invariant subspace of $V_2(\boldR).$ 

Let $\alpha_1$ and $\alpha_2$ denote the roots of $r_1,$ while
$\alpha_3, \alpha_4, \alpha_5$ are the roots of $r_2.$  Then 
$\alpha_1\alpha_3$ is a root of 
$r_1 \wedge r_2.$  By standard arguments, $[\boldQ(\alpha_1\alpha_3): \boldQ] = 6,$  so $r_1 \wedge r_2$ is the minimal polynomial of $\alpha_1\alpha_3.$
Therefore  $r_1 \wedge r_2$ is
irreducible and $\fraki_2 = [E_1, E_2]$ is a minimal nontrivial
rational invariant subspace.
The subspace
$V_2(\boldR)$ decomposes as the sum $\fraki_1 \oplus \fraki_2 \oplus \fraki_3$
of minimal nontrivial invariant subspaces, where
$\fraki_1, \fraki_2$ and $\fraki_3$ are as in Definition \ref{2+3-i},
and the only possibilities for an ideal $\fraki$ defining a 
two-step Anosov quotient are $\fraki_1, \fraki_1 \oplus \fraki_2$ and
$\fraki_1 \oplus \fraki_3,$ as claimed. 

Choosing $r_1$
and $r_2$ to be arbitrary  Anosov polynomials of degree two and three
respectively will yield an Anosov polynomial $p_1 = r_1 r_2$ such that
 the corresponding
automorphism $f$ of $\frakf_{5,2}$ admits quotients of all types  
listed in the table.  By choosing $r_2$ so it has real roots,
  the roots of $p_2$ will be real, and 
$r_1 \wedge r_2$ will have no roots of modulus one. 
\end{proof}

\section{Proofs of main theorems}\label{summary}

Now we provide proofs for the theorems presented in Section \ref{introduction}.

\begin{proof}[Proof of Theorem \ref{main}.] 
Suppose that $\frakn$ is a two-step Anosov Lie algebra of type $(n_1,n_2)$
with associated polynomials $(p_1,p_2).$
If $n_1 = 3, 4,$ or $5,$ then $\frakn$ is one of the Lie algebras in
 Table \ref{r=2}, by Theorems \ref{n1=3}, \ref{n=4,r=2}
and \ref{n1=5}.   Therefore, 
Part \eqref{classify-lowdim} of Theorem \ref{main}  holds.

The second part follows immediately from by Part \eqref{2,2} of 
Theorem \ref{Sn}. 
\end{proof}

\begin{proof}[Proof of Theorem \ref{Sn-Cn}.] 
The first part of the theorem follows immediately from Theorem \ref{Cn}.
The second part is a consequence of  Theorem \ref{Sn}.
\end{proof}

\begin{proof}[Proof of Theorem \ref{general-properties}.]
Corollaries \ref{prime-dim} and \ref{dimensions} imply
the theorem.  
\end{proof}

\begin{proof}[Proof of Theorem \ref{spectrum}.]
Suppose that the spectrum of $f$ is in $\boldQ(\sqrt{b}).$  Then 
the polynomial $p_1$ associated to $f$ 
is a product of quadratics, each of whose roots lie in $\boldQ(\sqrt{b}).$
Theorem \ref{2+2+...} implies that $\frakn$ is one of the 
Lie algebras defined in Definition \ref{graph}.
\end{proof}

\bibliographystyle{alpha}
\bibliography{anosovbib}

\newcommand{\etalchar}[1]{$^{#1}$}
\begin{thebibliography}{BDGGH{\etalchar{+}}92}

\bibitem[AS70]{auslander-scheuneman}
Louis Auslander and John Scheuneman.
\newblock On certain automorphisms of nilpotent {L}ie groups.
\newblock In {\em Global Analysis (Proc. Sympos. Pure Math., Vol. XIV,
  Berkeley, Calif., 1968)}, pages 9--15. Amer. Math. Soc., Providence, R.I.,
  1970.

\bibitem[BDGGH{\etalchar{+}}92]{bertin-92}
M.-J. Bertin, A.~Decomps-Guilloux, M.~Grandet-Hugot, M.~Pathiaux-Delefosse, and
  J.-P. Schreiber.
\newblock {\em Pisot and {S}alem numbers}.
\newblock Birkh\"auser Verlag, Basel, 1992.
\newblock With a preface by David W. Boyd.

\bibitem[BH]{bell-hare-05}
J.~P. Bell and K.~G. Hare.
\newblock On $\mathbb{Z}$-modules of algebraic numbers.
\newblock preprint, www.math.uwaterloo.ca/$\sim$kghare/Preprints/.

\bibitem[Dan78]{dani-78}
S.~G. Dani.
\newblock Nilmanifolds with {A}nosov automorphism.
\newblock {\em J. London Math. Soc. (2)}, 18(3):553--559, 1978.

\bibitem[DM05]{dani-mainkar-05}
S.~G. Dani and Meera~G. Mainkar.
\newblock Anosov automorphisms on compact nilmanifolds associated with graphs.
\newblock {\em Trans. Amer. Math. Soc.}, 357(6):2235--2251 (electronic), 2005.

\bibitem[EW99]{everest-ward}
Graham Everest and Thomas Ward.
\newblock {\em Heights of polynomials and entropy in algebraic dynamics}.
\newblock Universitext. Springer-Verlag London Ltd., London, 1999.

\bibitem[Fri81]{fried-81}
David Fried.
\newblock Nontoral pinched {A}nosov maps.
\newblock {\em Proc. Amer. Math. Soc.}, 82(3):462--464, 1981.

\bibitem[Hal50]{hall50}
Marshall Hall, Jr.
\newblock A basis for free {L}ie rings and higher commutators in free groups.
\newblock {\em Proc. Amer. Math. Soc.}, 1:575--581, 1950.

\bibitem[Ito89]{ito-89}
Kiyoshi Ito.
\newblock Classification of nilmanifolds {$M\sp n$} {$(n\le 6)$} admitting
  {A}nosov diffeomorphisms.
\newblock In {\em The study of dynamical systems (Kyoto, 1989)}, volume~7 of
  {\em World Sci. Adv. Ser. Dynam. Systems}, pages 31--49. World Sci.
  Publishing, Teaneck, NJ, 1989.

\bibitem[JLY02]{jensen-ledet-yui}
Christian~U. Jensen, Arne Ledet, and Noriko Yui.
\newblock {\em Generic polynomials}, volume~45 of {\em Mathematical Sciences
  Research Institute Publications}.
\newblock Cambridge University Press, Cambridge, 2002.
\newblock Constructive aspects of the inverse Galois problem.

\bibitem[Lau03a]{lauret-03}
Jorge Lauret.
\newblock Corrigendum to: ``{E}xamples of {A}nosov diffeomorphisms'' [{J}.\
  {A}lgebra {\bf 262} (2003), no. 1, 201--209; mr1970807].
\newblock {\em J. Algebra}, 268(1):371--372, 2003.

\bibitem[Lau03b]{lauret-03c}
Jorge Lauret.
\newblock Examples of {A}nosov diffeomorphisms.
\newblock {\em J. Algebra}, 262(1):201--209, 2003.

\bibitem[LW]{lauret-will-05}
Jorge Lauret and Cynthia~E. Will.
\newblock {Nilmanifolds of dimension $\le 8$ admitting {A}nosov
  diffeomorphisms}.
\newblock To appear, {\em Trans. Amer. Math. Soc.}

\bibitem[Mai]{mainkar-06b}
Meera Mainkar.
\newblock {Nonexistence of Anosov automorphisms on nilmanifolds}.
\newblock preprint.

\bibitem[May91]{mayer}
D.~Mayer.
\newblock Sur les \'equations alg\'ebriques.
\newblock {\em Nouv. Ann. de math}, 3(10):111--124, 1891.

\bibitem[Mey72]{meyer-72}
Yves Meyer.
\newblock {\em Algebraic numbers and harmonic analysis}.
\newblock North-Holland Publishing Co., Amsterdam, 1972.
\newblock North-Holland Mathematical Library, Vol. 2.

\bibitem[MM99]{malle-matzat}
Gunter Malle and B.~Heinrich Matzat.
\newblock {\em Inverse {G}alois theory}.
\newblock Springer Monographs in Mathematics. Springer-Verlag, Berlin, 1999.

\bibitem[Reu93]{reutenauer}
Christophe Reutenauer.
\newblock {\em Free {L}ie algebras}, volume~7 of {\em London Mathematical
  Society Monographs. New Series}.
\newblock The Clarendon Press Oxford University Press, New York, 1993.
\newblock Oxford Science Publications.

\bibitem[Ser77]{serre-77}
Jean-Pierre Serre.
\newblock {\em Linear representations of finite groups}.
\newblock Springer-Verlag, New York, 1977.
\newblock Translated from the second French edition by Leonard L. Scott,
  Graduate Texts in Mathematics, Vol. 42.

\bibitem[Ser92]{serre-92}
Jean-Pierre Serre.
\newblock {\em Topics in {G}alois theory}, volume~1 of {\em Research Notes in
  Mathematics}.
\newblock Jones and Bartlett Publishers, Boston, MA, 1992.
\newblock Lecture notes prepared by Henri Damon [Henri Darmon], With a foreword
  by Darmon and the author.

\bibitem[Taj21]{tajima}
M.~Tajima.
\newblock On the roots of an algebraic equation.
\newblock {\em T\^ohoku Math J.}, 19:173--4, 1921.

\end{thebibliography}
\end{document}